\tikzstyle{sqvec} = [matrix,thick]
\title{Semisupervised Community Detection via Quasi-Stationary Distributions}
\author{Nicolas Fraiman, Michael Nisenzon}
\date{\today}
\newcommand{\Ex}[1]     {\mathbb{E}\left[#1\right]}
\newcommand{\Var}[1]    {\mathbb{V}\left[#1\right]}
\newcommand{\Prob}[2][] {\mathbb{P}_{#1}\left(#2\right)}
\newcommand{\Ind}[1]    {\mathbbm{1}_{#1}}
\newcommand{\norm}[1]   {\left\| #1 \right\|}
\newcommand{\eps}   {\varepsilon}
\newcommand{\V}    {\mathcal{V}}
\newcommand{\U}    {\mathcal{U}}
\newcommand{\R}    {\mathcal{R}}
\newcommand{\C}    {\mathcal{C}}
\newcommand{\SNR}   {\text{SNR}}
\DeclareMathOperator{\sgn}{Sgn}
\DeclareMathOperator{\pSBM}{PL--SBM}
\DeclareMathOperator{\tr}{tr}
\newtheorem{theorem}{Theorem}
\newtheorem{lemma}{Lemma}
\newtheorem{remark}{Remark}
\theoremstyle{definition}
\newtheorem{definition}{Definition}
\begin{document}

\begin{abstract}
Spectral clustering is a widely used method for community detection in networks. We focus on a semi-supervised community detection scenario in the Partially Labeled Stochastic Block Model (PL-SBM) with two balanced communities, where a fixed portion of labels is known. Our approach leverages random walks in which the revealed nodes in each community act as absorbing states. By analyzing the quasi-stationary distributions associated with these random walks, we construct a classifier that distinguishes the two communities by examining differences in the associated eigenvectors. We establish upper and lower bounds on the error rate for a broad class of quasi-stationary algorithms, encompassing both spectral and voting-based approaches. In particular, we prove that this class of algorithms can achieve the optimal error rate in the connected regime. We further demonstrate empirically that our quasi-stationary approach improves performance on both real-world and simulated datasets.
\end{abstract}

\maketitle

\section{Introduction}
%========================================
Community detection is the problem of identifying subsets of nodes (communities) in a network that are more densely connected internally than to the rest of the network. Such communities often reveal key structural properties in diverse settings, including social networks, biological systems, and information networks. Mathematically, one often models this structure via an unknown partition $\sigma$ of the node set $[n]$. Given the network’s adjacency matrix $A$, the task is to infer this partition $\sigma$, providing insight into the network's organization and underlying processes.

A common generative model for networks with latent community structure is the stochastic block model (SBM) introduced by Holland, Laskey, and Leinhardt~\cite{Holland}. This model has been studied extensively as a theoretical benchmark for evaluating community detection algorithms, for a survey of results see~\cite{Abbe2018}. The balanced stochastic block model (SBM) with two communities generates a random graph on $n$ nodes partitioned into two equally sized groups, $\C_+$ and $\C_-$, where edges appear with probability $p$ within each community and $q$ across communities.

Previous work~\cite{Abbe2016, Hajek16} established that in the connected regime, exact recovery of the two communities is possible for a sufficiently large gap between $p$ and $q$. Spectral algorithms are a popular and easily implemented class of methods for community recovery that leverage the eigenstructure of matrices associated with the graph, such as the adjacency matrix or the Laplacian matrix, to reveal these communities.

In many scenarios, some node labels are partially revealed, making the problem semi-supervised. Incorporating these labels can improve the rate of recovery. In the bounded average degree regime, even an arbitrary small fraction of revealed labels strictly improve the achievable recovery rate, and message passing algorithms that incorporate side information asymptotically achieve the optimal recovery rate~\cite{Mossel2016}. Moreover, partially revealed labels allow recovery for parameters outside the regular recovery region~\cite{Strohmer24}.

Quasi-stationary distributions (QSDs) provide a natural way to integrate partial label information. Given a set of revealed nodes in one community, we define a random walk on the unrevealed nodes while treating the revealed nodes as absorbing states. The limiting distribution of this random walks conditional on not being absorbed is a QSD. See the book by Collet, Martinez and San Martin~\cite{Collet13} for a general reference on the subject. Yaglom~\cite{Yaglom47} introduced the concept and showed that such limiting distributions exist for branching processes conditional on avoiding extinction. Darroch and Seneta~\cite{DS65} characterized QSDs for discrete--time Markov Chains as left principal eigenvectors of a suitably defined submatrix of the transition matrix. Thus, QSDs methods allow eigenvector analysis similar to the classical spectral methods for community detection to bound the error rate. 

Due to the heterogeneous structure of the SBM, we expect an unabsorbed random walk to spend less time in the nodes of the same community as the absorbing set. Darroch and Seneta~\cite{DS65} also interpreted the right eigenvector of the transition matrix as a vector of nonnegative weights on the quasi-stationary distribution such that the entrywise product is the occupation measure (the proportion of time spent at each node) conditional on non-absorption.

In this paper, we apply quasi-stationary distributions to semi-supervised community detection in the partially labeled SBM. By treating revealed nodes as absorbing states and examining the corresponding QSDs, we construct classifiers that leverage both the graph structure and partial label information. We quantify the recovery rate of our methods, and show that our methods extend the range of exact recovery (i.e., identifying all community labels correctly with high probability) in the presence of partially revealed labels all the way to the impossibility threshold shown by Saad and Nosratinia \cite{Nosratinia2018}.

\subsection{Related results}\label{sec:related}
%========================================
For the connected regime, Abb\'e, Bandeira, and Hall~\cite{Abbe2016} and Hajek, Wu, and Xu \cite{Hajek16} established the exact recovery threshold for community detection in the SBM in the connected regime where here $p=a\log n/n$ and $q=b\log n/n$. They proved that below a certain value, exact recovery is information-theoretically impossible and showed that above this value communities can be recovered exactly with spectral methods with refinement, and semidefinite programming respectively.

Zhang and Zhou~\cite{Zhang2016} employed a minimax formulation to determine the optimal error rate for for growing degree networks. Later, Abb\'e, Fan, Wang, and Zhong~\cite{Abbe2020} showed that a simple spectral algorithm using the second eigenvector of the adjacency matrix has a recovery rate that achieves the information-theoretic upper bound for community detection in the connected regime. In particular, the second eigenvector $\bar{\nu}_2$ of the expected adjacency matrix $\Ex{A}$ exactly classifies the communities since $\bar{\nu}_2 \propto \Ind{\C_+} - \Ind{\C_-}$. Unfortunately, the second eigenvector $\nu_2$ of $A$ is not concentrated tightly enough around $\bar{\nu}_2$ for exact recovery. Instead, they used the image $A\bar{\nu}_2/\lambda_2(\bar{A})$ and showed that this product still achieves the optimal recovery rate of $n^{-(\sqrt{a}-\sqrt{b})^2/2}$ with additional lower order terms. Methodologically, they applied the leave--one--out technique to bound the entrywise fluctuations of the $\nu_2$ eigenvector around the image $A\bar{\nu}_2/\lambda(\bar{A})$ to a lower order. As a result, under appropriate choices of the connectivity parameters $p,q$, the second eigenvector $\nu_2$ of $A$ would correctly classify all nodes with high probability, and therefore spectral methods achieve the optimal recovery rate in the connectivity regime.

Subsequently, Deng, Ling, and Strohmer~\cite{Deng2021} extended the results in \cite{Abbe2020} to normalizing under the degree matrix $D$. In particular, for the symmetric degree-normalized graph Laplacian $I-D^{-1/2}AD^{-1/2}$, they used a generalized Davis-Kahan theorem to bound the perturbation in the eigenspace. They found that the Laplacian still gives the same rates for exact recovery in the connected case and showed more stable empirical results.

For the sparse regime, Mossel, Neeman, and Sly \cite{MSN} and separately Massouli\'e \cite{Massoulie} proved the conjecture of Decelle, Krzakala, Moore, and Zdeborov\'a \cite{DKMZ} identifying the so-called ``detectability threshold''. Below this threshold, it becomes information-theoretically impossible to reliably detect communities, regardless of the algorithm used. The detectability threshold is determined by a signal--to--noise ratio (SNR) given by $(a-b)^2/2(a+b)$, where partial recovery is possible when $\SNR > 1$. Moreover, Chin, Rao, and Vu~\cite{Chin15} found that spectral methods for the adjacency matrix achieve near optimal rates of recovery in the sparse case, with bounds on rates of recovery based on the signal--to--noise ratio. 

Community detection with side information has been examined as a potential way to refine and improve the sharp threshold for exact recovery in the connected regime. Saad and Nosratinia~\cite{Nosratinia2018} considered the connected SBM with partially revealed labels subject to noise and found necessary and sufficient conditions for exact recovery, as well as a two-step algorithm to achieve recovery. In this case, the second step is a corrective voting phase.

More recently, Gaudio and Joshi~\cite{Gaudio2024} found an information-theoretic threshold for recovery under partial and noisy side information by comparing to genie-aided estimators where all but one node are revealed. They also found single-step algorithms that achieve exact recovery. In both cases, the parameters for exact recovery do not change substantively. In particular, we cannot improve the range of parameters where exact recovery is possible from the bound in \cite{Abbe2016} unless $1-o(1)$ of the labels are revealed.   

In the bounded degree case, when $p=a/n$ and $q =b/n$, exact recovery is impossible as there are many isolated nodes. However, partial recovery is still possible when $\SNR > 1$. In the presence of side information, Mossel and Xu~\cite{Mossel2016} show that there exist algorithms that achieve optimal performance as well. More recently, Strohmer and Sheng~\cite{Strohmer24} show that in the presence of side information, partial recovery is possible below the $\SNR > 1$ threshold. In particular, any constant fraction $\delta$ of revealed nodes allows for weak recovery for any $\SNR > 0$. 

\subsection{Our contributions}
%========================================
In this paper, we consider the case of a SBM with two balanced communities and partially revealed, noise-free labels. To the best of our knowledge, this is the first paper to consider and apply quasi-stationary distributions to community detection. By treating revealed nodes as absorbing states, we define transition submatrices $P_i$ for each community. Under appropriate connectivity conditions, each quasi-stationary distribution $\mu_i$ exists and assigns lower probabilities to nodes within the same community, facilitating community detection.

We formulate a class of single step estimators for community detection based on the eigenvectors of $P_i$ that includes a simple voting component based on the revealed nodes. The class of estimators is parametrized by a weight to the quasi-stationary component. Using the leave--one--out technique in \cite{Abbe2020} and the generalized Davis-Kahan theorem in \cite{Deng2021} allows us to extend the entrywise eigenvector analysis from the adjacency to the transition matrix. We establish an upper bound on error rates and empirically demonstrate improvements under various parameter settings, particularly in the bounded degree regime.

In addition to an upper bound, we show a minimax lower bound on the error rate with side information over all balanced partitions and partial labellings $(\sigma,\ell)$, analogous to Zhang and Zhou~\cite{Zhang2016}. In particular, the error rate does not change asymptotically with revealed labels in the connected regime and the QSD class achieves the optimal error rate in the connected regime. This aligns with previous work \cite{Gaudio2024,Nosratinia2018}, which confirms the known range for exact recovery under partial information. We present a concise proof via an extension of equivariance as defined in Xu, Jog, and Loh~\cite{Xu2020} under community--preserving permutations of the clustering and revealed labels.

The paper is structured as follows. In Section \ref{sec:related}, we review related work. In Section \ref{sec:main} we define our quasi-stationary algorithm and present upper and lower bounds on its error rate. In Section \ref{sec:proofs}, we show the equivariance lemma needed for the lower bound and develop the matrix and eigenvector concentration results needed for the upper bound. Finally, in Section \ref{sec:experiments}, we empirically compare the QSD method with the classic spectral algorithm over a collection of real and simulated datasets.

\section{Main Results}\label{sec:main}
%========================================
\subsection{Setup and Notation}
%========================================
We first define the model we study, the Partially Labeled Balanced SBM (PL--SBM).

\begin{definition}[Balanced partitions and partial labels]\label{def:pl-sbm}
Let $\V = [n]$, a \emph{partition map} $\sigma: \V \rightarrow \{-1,1\}$ divides the nodes into two corresponding \emph{communities} $\C_+ = \sigma^{-1}(1)$ and $\C_- = \sigma^{-1}(-1)$. Given $\sigma$, \emph{partial labels} are any map $\ell: \V \rightarrow \{-1,0,1\}$ where $\ell(v) = \sigma(v)$ if $\ell(v)\neq 0$. The preimages $\R_+ = \ell^{-1}(1)$, $\R_- = \ell^{-1}(-1)$ are called the \emph{revealed sets} and $\U = \ell^{-1}(0)$ is the \emph{unrevealed set}. We say that $(\sigma, \ell)\in \mathfrak{C}(\delta)$ is in the $\delta$-fraction revealed class of balanced partitions and partial labels if
\[
|\C_+| = |\C_-| = n/2 \text{ and } |\R_+| = |\R_-| = \delta n/2.
\]
\end{definition}

\begin{definition}[Partially Labeled Balanced SBM] 
Given a pair $(\sigma,\ell)\in \mathfrak{C}(\delta)$ and parameters $p$ for the within-community connectivity, and $q$ for the across-community connectivity, a realization of the \emph{Partially Labeled Balanced Stochastic Block Model} $\pSBM(p,q,\sigma,\ell)$ is given by the adjacency matrix $A$ of the graph. The entries of $A$ are symmetric, independent, and given by 
\[
A(u,v) =\begin{cases}
      W(u,v), & \text{if}\ \sigma(u)=\sigma(v)\\
      Z(u,v), & \text{if}\ \sigma(u)\neq\sigma(v)
    \end{cases}
\]
where $W(u,v)$ is a Bernoulli($p$) variable, $Z(u,v)$ is a Bernoulli($q$) variable.
\end{definition}

Recall that unrevealed nodes be denoted by $\U$, let the unrevealed nodes in community $\C_i$ be defined by $\U_i = \C_i \setminus \R_i$. Applying a permutation to make the respective subgroups contiguous gives us Figure \ref{fig:labels} below.

\begin{figure}[ht]
\centering
\begin{tikzpicture}[scale=.7, transform shape]
    \draw[thick] (0, 0) rectangle (10, 1);
    \draw[thick, dashed] (5, 0) -- (5, 1);
    \node[below] at (2.5, 0) {\huge $\C_+$};
    \node[below] at (7.5, 0) {\huge $\C_-$};
    \draw[thick, red] (0, 0) rectangle (2, 1);
    \node[above] at (1, 0) {\color{red} \huge $\R_+$};
    \draw[thick, blue] (8, 0) rectangle (10, 1);
    \node[above] at (9, 0) {\color{blue} \huge $\R_-$};
    \node[above] at (3.5, 0) {\huge $\U_+$};
    \node[above] at (6.5, 0) {\huge $\U_-$};
\end{tikzpicture}
\caption{\label{fig:labels} Revealed and unrevealed nodes up to permutation.}
\end{figure}
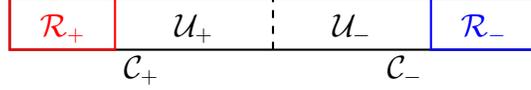

In addition to the adjacency matrix, we also consider the transition matrix $P = D^{-1}A$ which results from normalizing by degree. For our procedure, we consider only revealed nodes in one community $\R_i$ at a time. For each community $i$, we 
consider the restriction to the remaining nodes which we indicate by $\V_i = \V \setminus \R_i$. The adjacency and transition submatrices are defined as follows.

\begin{definition}[Transition submatrices and eigenvectors] 
For each community $i$, we define the submatrices
\[
A_i = A|_{\V_i,\V_i} \quad\text{and}\quad P_i = P|_{\V_i,\V_i}.
\]
We denote the left and right principal eigenvectors of $P_i$ by $\mu_i$ and $\pi_{i}$ respectively. We normalize the left eigenvectors $\mu_i$ such that $\norm{\mu_i}_1 = 1$ in this paper to preserve the intuition of a probability distribution and the right eigenvectors $\pi_i$ such that $\norm{\pi_i}_2 = 1$ so we can apply the Davis-Kahan theorem.
\end{definition}

Up to permutation, the resulting submatrices $P_i$ are shown in Figure \ref{fig:submatrices} below.

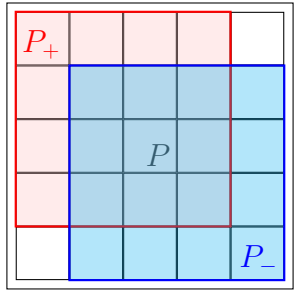
\begin{figure}[ht]
\centering
\begin{tikzpicture}[scale=.7, transform shape]
    \matrix[matrix of nodes, nodes in empty cells,minimum width=.7cm, minimum height=.7cm, draw=black] (P) {
        |[draw=black]| & |[draw=black]| & |[draw=black]| & |[draw=black]| & |[draw=black]| \\
        |[draw=black]| & |[draw=black]| & |[draw=black]| & |[draw=black]| & |[draw=black]| \\
        |[draw=black]| & |[draw=black]| & |[draw=black]| & |[draw=black]| & |[draw=black]| \\
        |[draw=black]| & |[draw=black]| & |[draw=black]| & |[draw=black]| & |[draw=black]| \\
        |[draw=black]| & |[draw=black]| & |[draw=black]| & |[draw=black]| & |[draw=black]| \\
    };
    
    \node[above left] at (P-3-4.south west) {\huge $P$};

    \draw[thick, red, fill= pink, fill opacity = .3] (P-1-1.north west) rectangle (P-4-4.south east);
     \draw[thick, blue, fill= cyan, fill opacity = .3] (P-2-2.north west) rectangle (P-5-5.south east);
    
    \node[above right, text=red] at (P-1-1.south west) {\huge $P_+$};
    \node[above left, text=blue] at (P-5-5.south east) {\huge $P_-$};
\end{tikzpicture}
\caption{\label{fig:submatrices} Submatrices of the transition matrix up to permutation.}
\end{figure}
The above submatrix definitions extend naturally to the expected adjacency matrix $\bar{A} = \Ex{A}$, expected degree matrix $\bar{D} = \Ex{D}$ and expected transition matrix $\bar{P} = \bar{D}^{-1}\bar{A}$. All matrices are interpreted as functions, with $A(u,v) = A_{u,v}$ for $u,v \in \V$. 

Throughout the paper, for concision, we use $\lambda_1,\lambda_2$ alone to refer to the first and second eigenvalues $\lambda_1 = \lambda_1(\bar{P}_+) = \lambda_1(\bar{P}_-)$ and $\lambda_2 =\lambda_2(\bar{P}_+) = \lambda_2(\bar{P}_-)$ and we denote the $i$th eigenvalue of a matrix $M$ by $\lambda_i(M)$. In addition we denote the degree of node $u$ by $d(u)$ and the the expected degree of each node by $\bar{d} = (a+b)\log n/2$.

We proceed to describe the eigenvector of the expected transition matrix $\bar{P}_i$ and establish constants used throughout the paper. We note that the expected adjacency matrix $\bar{A}$ follows the form
\[
\bar{A} = \frac{\log n}{n} \begin{bmatrix}a & b \\ b & a\end{bmatrix} \otimes \mathbf{J}_{n/2\times n/2}
\]
where $\mathbf{J}$ is the all-ones matrix.

Then $\bar{P}_i$ is also a rank-two block matrix with two distinct values and the principal eigenvector $\bar{\pi}_i$ is proportional to $\Ind{\U_i} + \rho\cdot \Ind{\C_{-i}}$ for some $\rho > 0$. Then we see that 
\[
(1-\delta)\frac{na}{2}+\rho \frac{nb}{2} = \frac{1}{\rho}\left((1-\delta)\frac{nb}{2}+\delta \frac{na}{2}\right).
\]
Rearranging terms gives us the quadratic equation
\[
\rho^2 - \frac{a\delta \rho}{b} - (1-\delta) =0
\]
which has a positive solution of
\[
\rho = \frac{1}{2b}(a\delta + \sqrt{a^2\delta^2+4(1-\delta)b^2}).
\]

We use $\rho$ to normalize the mean--field eigenvector $\bar{\pi}_i$ such that $\norm{\bar{\pi}_i}_2 = 1$.

\begin{definition}[Mean--field eigenvector]\label{def:mf-eig} For $i \in \{+,-\}$, we normalize the eigenvector $\bar{\pi}_i$ as
    \[\bar{\pi}_i = \frac{1}{\gamma\sqrt{n}}(\Ind{\U_i} + \rho\cdot \Ind{\C_{-i}})\]
    where $\rho = \dfrac{1}{2b}(a\delta + \sqrt{a^2\delta^2+4(1-\delta)b^2})$ and $\gamma =\sqrt{\dfrac{1-\delta+\rho^2}{2}}$.
\end{definition}

Finally, we define the error rate for a recovery algorithm. Any recovery algorithm $\hat{\sigma}$ takes in an adjacency matrix $A$ and partial labelling $\ell$ and returns an estimator $\hat{\sigma}[A,\ell]: \U \to \{1,-1\}$ that can be compared to the true assignment $\sigma$ using the Hamming distance to find the error rate. In the unlabeled case, this holds up to a global flip between the communities~\cite{Abbe2020}, but partial labellings allow us to identify the respective communities so such a realignment is no longer necessary. Then for a fixed clustering $\sigma$ and a fixed partial labelling $\ell$, the error rate for $\hat{\sigma}$ is the normalized Hamming distance on the unrevealed nodes $\U$.

\begin{definition}[Error rate] For an estimator $\hat{\sigma}$, the error rate $r(\hat{\sigma})$ is defined as
\[
r(\hat{\sigma}) = \frac{1}{|\U|}\sum_{v\in \U} \Ind{ \hat{\sigma}(v) \neq \sigma(v)}.
\]    
\end{definition}

\subsection{Minimax bound for semi-supervised clustering}
%========================================
Following the approach of Zhang and Zhou~\cite{Zhang2016}, we derive a minimax bound that suggests that the quasi-stationary method in the connected regime has asymptotic performance matching the optimal unsupervised case. This requires showing a partially revealed equivariance lemma analogous to their global-to-local lemma.

Throughout this paper, we only consider permutation equivariant and community equivariant estimators $\hat{\sigma}$ extending the definitions of Xu, Jog, and Loh~\cite{Xu2020}.

Let $\pi \in \Pi_{n}$ be a permutation on all $n$ nodes and $A^\pi, \sigma^\pi, \ell^\pi$ be the permuted adjacency matrix, labels, and revealed labels, where $A^\pi(\pi(u),\pi(v)) = A(u,v)$.

\begin{definition}[Equivariance]
$\hat{\sigma}$ is permutation equivariant if permuting the labels does not affect the estimate of the permuted nodes
\[
\hat{\sigma}[A,\ell] = \hat{\sigma}[A^\pi,\ell^\pi]\circ\pi
\] 
In addition, $\hat{\sigma}$ is community equivariant if permuting the communities of the revealed nodes commutes with the estimator
\[
\hat{\sigma}[A, -\ell] = -\hat{\sigma}[A,\ell].
\]
\end{definition}
We note that permutation and community equivariance both hold for a wide variety of estimators, including the quasi-stationary method used in this paper.

In our setting, the minimax theorem corresponds to the following theorem.
\begin{theorem}\label{th:minimax} 
For equivariant estimators $\hat{\sigma}$ we have
 \[
\inf_{\hat{\sigma}} \Ex{r(\hat{\sigma})} \geq n^{-(1+o(1))(\sqrt{a}-\sqrt{b})^2/2}.
\]
\end{theorem}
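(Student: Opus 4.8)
The plan is to reduce the minimax problem over the class $\mathfrak{C}(\delta)$ to the analysis of a single ``genie-aided'' vertex, exploiting the equivariance of $\hat\sigma$ to turn a worst-case statement into an average-case one. First I would fix a reference pair $(\sigma,\ell)\in\mathfrak{C}(\delta)$ and observe that, since we restrict to permutation- and community-equivariant estimators, the expected error rate $\Ex{r(\hat\sigma)}$ is the same for every $(\sigma,\ell)$ in the orbit of the symmetric group acting on $\mathfrak{C}(\delta)$; averaging over this orbit, the minimax risk equals $\frac{1}{|\U|}\sum_{v\in\U}\Prob{\hat\sigma(v)\neq\sigma(v)}$ for a fixed configuration, and by symmetry each summand is controlled by the probability that the estimator misclassifies a fixed unrevealed node $v$. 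This is the partially-revealed analogue of the global-to-local reduction of Zhang and Zhou~\cite{Zhang2016}, and the equivariance lemma promised in Section~\ref{sec:proofs} is exactly what licenses it.

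Next I would lower bound the single-vertex error by the Bayes error of the corresponding hypothesis test: conditionally on the labels of all vertices other than $v$ (the ``genie'' information) together with the revealed set, deciding $\sigma(v)=+1$ versus $\sigma(v)=-1$ is a test between two product-Bernoulli laws for the edges incident to $v$. The log-likelihood ratio is a sum of $n/2-1$ i.i.d. contributions from within-community candidate neighbors and $n/2$ from across-community ones, each a difference of $\mathrm{Bernoulli}(p)$ and $\mathrm{Bernoulli}(q)$ log-terms with $p=a\log n/n$, $q=b\log n/n$. No equivariant estimator can beat this genie-aided Bayes test, so $\Prob{\hat\sigma(v)\neq\sigma(v)}$ is at least (one half of) the probability that the LLR points the wrong way.

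The quantitative heart is then a large-deviations estimate for this LLR. Using a Chernoff/tilting computation — the same one underlying the $(\sqrt a-\sqrt b)^2/2$ threshold of Abbe, Bandeira, and Hall~\cite{Abbe2016} — the probability that the sum of the incident-edge log-likelihood increments favors the wrong hypothesis is $n^{-(1+o(1))(\sqrt a-\sqrt b)^2/2}$; crucially, because $|\R_+|=|\R_-|=\delta n/2$ are removed symmetrically from \emph{both} candidate neighbor pools, the revealed labels shift the count of relevant edges by the \emph{same} $\delta n/2$ on each side and the exponent is unchanged to leading order. Putting the three steps together yields $\inf_{\hat\sigma}\Ex{r(\hat\sigma)}\ge n^{-(1+o(1))(\sqrt a-\sqrt b)^2/2}$.

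I expect the main obstacle to be the first step rather than the last: carefully setting up the orbit-averaging so that it is legitimate to pass from $\inf_{\hat\sigma}\sup_{(\sigma,\ell)}$ to a fixed configuration requires the community-equivariance hypothesis to rule out the trivial estimator that always outputs the revealed pattern, and one must check that the genie conditioning is compatible with the balancedness constraint $|\C_+|=|\C_-|=n/2$ (conditioning on $\sigma$ restricted to $\V\setminus\{v\}$ being \emph{almost} balanced, with the residual imbalance affecting only lower-order terms). The Chernoff bound itself is standard and can be quoted in the form already used in~\cite{Abbe2016,Zhang2016}.
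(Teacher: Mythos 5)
Your proposal follows essentially the same route as the paper: the equivariance lemma reduces the expected error rate to the misclassification probability of a single unrevealed vertex, which is then lower-bounded by the genie-aided (all-other-labels-revealed) likelihood test exactly as in Lemma 5.1 of Zhang and Zhou, and the resulting binomial-difference probability is bounded below by $n^{-(1+o(1))(\sqrt{a}-\sqrt{b})^2/2}$ via the standard tilting estimate (the paper simply quotes Lemma 5.2 of Zhang and Zhou for this). The approach and all three steps match the paper's proof.
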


\begin{proof}
Let $\hat{\sigma}$ be a permutation and community equivariant estimator that satisfies
\[
\Ex{r(\hat{\sigma})} = \inf_{\hat{\sigma}} \Ex{r(\hat{\sigma})}. 
\]
By Lemma \ref{equivariance}, for any nodes $u,v \in \U$  
\[
\Prob{\hat{\sigma}(u) \neq \sigma(u)} = \Prob{\hat{\sigma}(v) \neq \sigma(v)}.
\]

We then apply linearity to see that
\[
\Ex{r(\hat{\sigma})} 
= \frac{1}{|\U|} \sum_{v\in \U} \Ex{\Ind{\hat{\sigma}(v) \neq \sigma(v)}}
=\frac{1}{|\U|}\sum_{v\in \U}  \Prob{\hat{\sigma}(v) \neq \sigma(v)} 
= \Prob{\hat{\sigma}(v) \neq \sigma(v)} 
\]
for any unrevealed node $v \in \U$.

Lemma 5.1 in Zhang and Zhou~\cite{Zhang2016} provides a lower bound on the entrywise error rate by considering the case where all other labels $[n] \setminus v$ are revealed and then taking a likelihood test between $v \in \C_+$ and $v \in \C_-$. In particular, the entrywise error rate is minimized by taking a majority vote on all $[n] \setminus v$. Since full information includes any partial information, this must also be a lower bound for the entrywise error rate of the PL--SBM. Therefore, 
\[
\Prob{\hat{\sigma}(v) \neq \sigma(v)} \geq \eps \Prob{\sum_{i=1}^{n/2} (W_i-Z_i) \leq 0}
\]
for some $\eps > 0$, where the $W_i$ and $Z_i$ are i.i.d. Bernoulli$(a\log n/n)$ and \\ Bernoulli$(b\log n/n)$ respectively. By Lemma 5.2 in~\cite{Zhang2016}
\[
\Prob{\sum_{i=1}^{n/2} (W_i-Z_i) \leq 0} \geq n^{-(1+o(1))(\sqrt{a}-\sqrt{b})^2/2}.
\]

We conclude that 
\[
\inf_{\hat{\sigma}} \Ex{r(\hat{\sigma})} \geq n^{-(1+o(1))(\sqrt{a}-\sqrt{b})^2/2}. \qedhere
\]
\end{proof}
\subsection{Performance of QSD method}
%========================================
The existence of revealed nodes $\R_i$ allows us to define  voting methods, in particular quasi-stationary distribution (QSD) methods and simple voting. We define the quasi-stationary method over two communities by taking the difference between the corresponding principal eigenvectors.

\begin{definition} (QSD score) \label{def:qsd-def}
For each node $u \in \U$, the quasi-stationary score $Q(u)$ is defined as the difference between the left eigenvectors
\[
Q(u) = \mu_-(u) - \mu_+(u).
\]
with a corresponding estimator
\[
\hat{\sigma}_Q(u) = \sgn(Q(u))
\]
\end{definition}

We define simple voting by subtracting the revealed group connections.

\begin{definition} (Simple voting score)
For each $u \in \U$, simple voting $S(u)$ is given by
\[
S(u) = \sum_{v \in \R_+} A(u,v) - \sum_{v \in \R_-} A(u,v)
\]
with a corresponding estimator
\[
\hat{\sigma}_S(u) = \sgn(S(u))
\]
\end{definition}

For a fixed choice of the connectivity parameters $a,b$ and an estimator $\hat{\sigma}$, we would like to bound the expected error rate $\Ex{r(\hat{\sigma})}$ over all networks generated from $\sigma,\ell$. Through equivariance as applied in Theorem \ref{equivariance}, we relate the global error rate to an entrywise error rate $\Prob{\hat{\sigma}(u) \neq \sigma(u)}$ for any unseen node $u \in \U$. The entrywise error is in turn bounded using Lemma \ref{lem:mean-field-error} in terms of an exponential rate $I$ analogous to the information-theoretic optimum in the unsupervised case \cite{Zhang2016}. A natural parametrization of the rate $I$ is by weights $(\alpha,\beta)$ on some functions of the unrevealed and revealed nodes, respectively. As a result, for some constants $(\alpha,\beta)$, the desired bounds on the error rate follow the form
\[
\Ex{r(\hat{\sigma})} \leq n^{-I(\alpha,\beta)}.
\]

We state the main results of our paper. We first show a bound on the quasi-stationary estimator $\hat{\sigma}_Q$.

\begin{theorem}\label{th:default-rate}  
Given $(\sigma,\ell)\in\mathfrak{C}(\delta)$ consider a $\pSBM(p,q,\sigma,\ell)$ with $p= a\log n/n$, $q= b\log n/n$ and $a+b > 4$. Then the quasi-stationary estimator $\hat{\sigma}_Q$ has an error rate of at most
 \[
\Ex{r(\hat{\sigma}_{Q})} \leq n^{-(1+o(1))I(\rho-1,\rho)}+o(n^{-1}).
\]
for 
\[
I(\alpha,\beta) = \frac{1}{2}\sup_{\theta > 0} a+b - (1-\delta)ae^{-\theta \alpha} -(1-\delta)be^{\theta \alpha} -\delta ae^{-\theta \beta} - \delta be^{\theta \beta}.
\]
where $\rho$ is defined in Definition \ref{def:mf-eig}.
\end{theorem}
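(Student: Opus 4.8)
The plan is to reuse the two-stage template already applied to Theorem~\ref{th:minimax}: first collapse the expected error rate to the entrywise misclassification probability of a single unrevealed node via equivariance, and then bound that probability by a Chernoff estimate after linearizing the quasi-stationary score. Since $\mu_\pm$ are built spectrally from $A$ and $\ell$, the estimator $\hat\sigma_Q$ is permutation and community equivariant, so Lemma~\ref{equivariance} gives $\Ex{r(\hat\sigma_Q)} = \Prob{\hat\sigma_Q(u)\neq\sigma(u)}$ for any fixed $u\in\U$, and by community equivariance it suffices to treat $u\in\U_+$, where correct classification means $Q(u)=\mu_-(u)-\mu_+(u)>0$.

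The heart of the argument is to show that, off an event of probability $o(n^{-1})$, $\sgn Q(u)$ is governed by the linear functional $\sum_{v} A(u,v)\,(\bar\pi_-(v)-\bar\pi_+(v))$. First I would pass from the left eigenvector $\mu_i$ of $P_i=D^{-1}A|_{\V_i,\V_i}$ to the symmetrized matrix $S_i=D_i^{-1/2}A_iD_i^{-1/2}$ (with $D_i=D|_{\V_i,\V_i}$), whose positive principal eigenvector $w_i$ satisfies $\mu_i\propto D_i^{1/2}w_i$ and $\pi_i\propto D_i^{-1/2}w_i$; hence $\mu_i(u)=d(u)\pi_i(u)/Z_i$ with $Z_i=\sum_{v\in\V_i}d(v)\pi_i(v)>0$, so $\sgn Q(u)=\sgn\!\big(Z_+\pi_-(u)-Z_-\pi_+(u)\big)$. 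Next I would invoke the entrywise eigenvector concentration developed in Section~\ref{sec:proofs} (the leave--one--out comparison together with the generalized Davis--Kahan theorem of~\cite{Deng2021}): replacing the $u$-th row and column of $A_i$ by their expectations produces a leading eigenvector independent of that row, and comparing with it, with $\bar\pi_i$ as in Definition~\ref{def:mf-eig} and using the degree concentration $d(v)=\bar d(1+o(1))$ and the spectral gap of $\bar P_i$, one obtains $\pi_i(u)=\kappa\,(A_i\bar\pi_i)(u)\,(1+o(1))$ plus an additive error of strictly smaller order than the fluctuations of $(A_i\bar\pi_i)(u)$, for a deterministic $\kappa>0$. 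By the $+\!\leftrightarrow\!-$ symmetry of the model the normalizers satisfy $Z_+=Z_-(1+o(1))$, and since $\bar\pi_i$ is supported on $\V_i$ we have $(A_i\bar\pi_i)(u)=\sum_v A(u,v)\bar\pi_i(v)$; combining these, $\sgn Q(u)=\sgn\!\big(\sum_v A(u,v)(\bar\pi_-(v)-\bar\pi_+(v))\big)$ up to a relative perturbation negligible even on the $n^{-I}$ scale.

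It then remains to compute the rate. Reading off Figure~\ref{fig:labels}, the weight $g(v)=\bar\pi_-(v)-\bar\pi_+(v)$ equals, up to the common factor $1/(\gamma\sqrt n)$, $+\rho$ on $\R_+$, $\rho-1$ on $\U_+$, $1-\rho$ on $\U_-$, and $-\rho$ on $\R_-$ (and one checks $\rho>1-\delta$, so the mean of $\sum_v A(u,v)g(v)$ is positive for $u\in\U_+$, consistent with $\sigma(u)=+1$). Thus the misclassification event is exactly of the form handled by Lemma~\ref{lem:mean-field-error} with unrevealed weight $\alpha=\rho-1$ and revealed weight $\beta=\rho$. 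The Chernoff bound $\Prob{\sum_v A(u,v)g(v)\le 0}\le\prod_v\big(1-p_{uv}(1-e^{-\theta g(v)})\big)$ with $p_{uv}\in\{a\log n/n,\,b\log n/n\}$ and block sizes $\delta n/2$ and $(1-\delta)n/2$ gives, after taking logarithms, $\exp\!\big(-(1+o(1))\tfrac{\log n}{2}\big[a+b-(1-\delta)ae^{-\theta(\rho-1)}-(1-\delta)be^{\theta(\rho-1)}-\delta ae^{-\theta\rho}-\delta be^{\theta\rho}\big]\big)$; optimizing over $\theta>0$ this is $n^{-(1+o(1))I(\rho-1,\rho)}$, and adding back the $o(n^{-1})$ contribution from the failure of the concentration events yields the stated bound. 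The hypothesis $a+b>4$ enters only in the matrix- and eigenvector-concentration steps of Section~\ref{sec:proofs}, ensuring that $P_\pm$ are well-defined irreducible substochastic matrices (connectivity of the restricted graphs) and that $\|A_i-\bar A_i\|$ is controlled relative to the spectral gap of $\bar P_i$.

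I expect the main obstacle to be precisely the entrywise control of the \emph{difference} $Z_+\pi_-(u)-Z_-\pi_+(u)$: one needs the leave--one--out error for each of the two submatrices $P_+,P_-$ \emph{and} the matching $Z_+=Z_-(1+o(1))$ of the normalizers to be of strictly smaller order than the $\Theta(\log n)$ signal and $\Theta(\sqrt{\log n})$ fluctuation of $\sum_v A(u,v)(\bar\pi_-(v)-\bar\pi_+(v))$, so that $\sgn Q(u)$ is dictated by this main term with only the genuinely lower-order $o(n^{-1})$ slack. By contrast the equivariance reduction and the Chernoff computation identifying $I(\rho-1,\rho)$ are routine.
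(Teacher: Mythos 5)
Your proposal is correct and follows essentially the same route as the paper: equivariance to reduce to a single entrywise error, the identity $\mu_i\propto D_i\pi_i$ to pass to right eigenvectors, leave--one--out plus generalized Davis--Kahan to replace $\pi_i$ by $P_i\bar\pi_i/\lambda_1$ entrywise, and the Chernoff bound on the resulting weighted binomial difference with weights $(\rho-1,\rho)$. The only cosmetic difference is that you match the normalizers $Z_+$ and $Z_-$ to each other by symmetry, whereas the paper compares each to the deterministic constant $\bar c\sqrt n\log n/\lambda_1$ (Lemma~\ref{lem:left-eig-difference}); both deliver the same $o(n^{-1})$ slack.
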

\begin{proof}
By Lemma \ref{equivariance}, we see that the expected error rate is equal to the probability of misclassifying any node $u$
\[
\Ex{r(\hat{\sigma}_Q)} 
= \frac{1}{|\U|}\sum_{v\in \U} \Prob{\hat{\sigma}_Q(v) \neq \sigma(v)}
= \Prob{\hat{\sigma}_Q(u) \neq \sigma(u)} = \Prob{\sigma(u)Q(u) < 0}.
\]

By Lemma \ref{lem:left-eig-difference}, we can approximate $\mu_--\mu_+$ by a difference of right eigenvectors $D_-\pi_- - D_+\pi_+$ with a scaling factor of $\frac{\lambda_1}{\norm{\bar{A}_i\bar{\pi}_i}_1} = \frac{\lambda_1}{\bar{c}\sqrt{n}\log n}$ for some constant $\bar{c}$ with value explicitly stated in Definition \ref{def:mixed-def}. In particular, with probability $1-o(n^{-1})$
\begin{align}
 \max_{u \in \U} \left|(\mu_--\mu_+)(u) - \frac{\lambda_1}{\bar{c}\sqrt{n}\log n}d(u)(\pi_--\pi_+)(u)\right| = o(n^{-1}). \label{eq:left-proof}
\end{align}

As a result, the error from approximating $Q(u)$ by right eigenvectors $\pi_i$ is controlled by upper-bounding with an expression of order $O(n^{-1})$. It is then sufficient to show that there is a $\eps(a,b) > 0$ such that
\begin{align}
& \Prob{\sigma(u)Q(u) < 0} \notag\\
  &\qquad = \Prob{\sigma(u)(\mu_- - \mu_+)(u) < 0} \notag\\
  &\qquad \leq \Prob{\sigma(u)\left((\mu_- - \mu_+)(u) - \frac{\lambda_1}{\bar{c}\sqrt{n}\log n}d(u)(\pi_--\pi_+)(u) \right) \leq -\frac{\eps}{n}} \label{eq:main1}\\
 &\qquad \quad + \Prob{\frac{\lambda_1}{\bar{c}\sqrt{n}\log n}\sigma(u)d(u)(\pi_--\pi_+)(u) \leq \frac{\eps}{n}} \label{eq:main2} 
\end{align}
We bound both terms separately. First, by our work above in \eqref{eq:left-proof}, \eqref{eq:main1} is bounded by 
\[
\Prob{\sigma(u)\left((\mu_- - \mu_+)(u) - \frac{\lambda_1}{\bar{c}\sqrt{n}\log n}d(u)(\pi_--\pi_+)(u) \right) \leq -\frac{\eps}{n}} = o(n^{-1})
\]
for any constant $\eps > 0$.

We bound $\eqref{eq:main2}$ by applying the right eigenvector bound in Lemma \ref{lem:right-eig-error}. We scale by the constant factor $\gamma = \sqrt{(1-\delta +\rho^2)/2}$ defined in Definition \ref{def:mf-eig}. We then note that $\bar{c}$ is also a constant and substitute $\eps_1 = \eps\bar{c}\gamma$ to get
\begin{align*}
&\Prob{\frac{\lambda_1}{\bar{c}\sqrt{n}\log n}\sigma(u)d(u)(\pi_--\pi_+)(u) \leq \frac{\eps}{n}} \\
& \qquad \qquad = \Prob{\sigma(u)\gamma\lambda_1\sqrt{n}d(u)(\pi_--\pi_+)(u) \leq \eps\bar{c}\gamma\log n} \\
&\qquad \qquad =\Prob{\sigma(u)\gamma\lambda_1\sqrt{n}d(u)(\pi_--\pi_+)(u)\leq \eps_1 \log n} \\
&\qquad \qquad \leq n^{-I(\rho-1,\rho)+O(\eps_1)} +o(n^{-1})
\end{align*}
where we use the definition of $I$ from Lemma \ref{lem:right-eig-error} to get
\[
I(\rho-1,\rho) = \frac{1}{2}\sup_{\theta > 0} (a+b) - (1-\delta)ae^{-\theta(\rho-1)} -(1-\delta)be^{\theta(\rho-1)} -\delta ae^{-\theta\rho} - \delta be^{\theta\rho}.
\]
We note that $I(\rho-1,\rho)$ does not have a closed--form solution.

Then we combine our bounds on \eqref{eq:main1} and \eqref{eq:main2} to conclude that
\[
\Prob{\sigma(u)(\mu_--\mu_+)(u) < 0} \leq n^{-I(\rho-1,\rho)+O(\eps_1)} + o(n^{-1}).
\]
The result follows by taking $\eps_1 \rightarrow 0$.
\qedhere
\end{proof}

In addition to the quasi-stationary estimator, we also analyze the simple voting estimator.

\begin{theorem}\label{th:simple-voting}  
Under the conditions of Theorem \ref{th:default-rate}, the simple voting estimator $\hat{\sigma}_S$ achieves the following error rate
 \[
\Ex{r(\hat{\sigma}_{S})} \leq n^{-(1+o(1))\delta(\sqrt{a}-\sqrt{b})^2/2}.
\]
\end{theorem}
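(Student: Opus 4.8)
The plan is to reduce the statement, as in Theorem~\ref{th:default-rate}, to an entrywise error estimate and then compute the resulting large-deviation rate explicitly in this simpler voting setting. First I would invoke Lemma~\ref{equivariance} to write
\[
\Ex{r(\hat{\sigma}_S)} = \Prob{\hat{\sigma}_S(u) \neq \sigma(u)} = \Prob{\sigma(u) S(u) < 0}
\]
for an arbitrary fixed unrevealed node $u \in \U$. Unlike the QSD case, there is no eigenvector approximation step: the score $S(u) = \sum_{v \in \R_+} A(u,v) - \sum_{v \in \R_-} A(u,v)$ is already an explicit sum of independent Bernoulli variables, since the entries $A(u,v)$ for $v \in \R_+ \cup \R_-$ are mutually independent. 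If $\sigma(u) = 1$, then $S(u)$ is a sum of $\delta n/2$ i.i.d.\ Bernoulli$(p)$ terms minus $\delta n/2$ i.i.d.\ Bernoulli$(q)$ terms, and by community equivariance the case $\sigma(u)=-1$ is symmetric. So it suffices to bound
\[
\Prob{\sum_{i=1}^{\delta n/2} (W_i - Z_i) \leq 0},
\]
with $W_i \sim$ Bernoulli$(a\log n/n)$ and $Z_i \sim$ Bernoulli$(b\log n/n)$.

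Next I would apply a Chernoff bound to this sum. Writing $m = \delta n/2$ and optimizing over $\theta > 0$,
\[
\Prob{\sum_{i=1}^{m}(W_i - Z_i) \leq 0} \leq \inf_{\theta > 0} \left(\Ex{e^{-\theta W_1}}\,\Ex{e^{\theta Z_1}}\right)^{m}.
\]
Since $\Ex{e^{-\theta W_1}} = 1 - \tfrac{a\log n}{n}(1 - e^{-\theta})$ and $\Ex{e^{\theta Z_1}} = 1 + \tfrac{b\log n}{n}(e^{\theta} - 1)$, using $1 + x \le e^x$ gives an exponent of the form
\[
-\frac{m\log n}{n}\left(a(1-e^{-\theta}) - b(e^{\theta}-1)\right)
= -\frac{\delta \log n}{2}\left(a + b - a e^{-\theta} - b e^{\theta}\right).
\]
The supremum over $\theta > 0$ of $a + b - a e^{-\theta} - b e^{\theta}$ is attained at $e^{\theta} = \sqrt{a/b}$ and equals $a + b - 2\sqrt{ab} = (\sqrt{a}-\sqrt{b})^2$. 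This yields $\Prob{\sigma(u)S(u) < 0} \le n^{-\delta(\sqrt{a}-\sqrt{b})^2/2}$, which is exactly the claimed bound; alternatively one can simply observe that this is $I(\alpha,\beta)$ from Theorem~\ref{th:default-rate} evaluated with $\delta = 1$ in the revealed terms and no unrevealed terms, i.e.\ the $\delta$-scaled version of the Zhang--Zhou rate, and cite Lemma~5.2 of~\cite{Zhang2016} verbatim with $n/2$ replaced by $\delta n/2$ to get the $(1+o(1))$ refinement.

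The one technical point requiring a little care, rather than a genuine obstacle, is handling the probability that $S(u) = 0$ exactly (ties) and confirming that the lower-order corrections in the exponent are genuinely $o(1)$ multiplicatively — this is where I would lean on the sharp form of the Chernoff/large-deviation estimate in Lemma~5.2 of~\cite{Zhang2016}, which already tracks these corrections for exactly this type of Bernoulli difference sum; the only change is the number of summands, $\delta n/2$ instead of $n/2$, which rescales the rate by $\delta$ and does not affect the $(1+o(1))$ factor since $\delta n/2 \to \infty$. I would also note in passing that $a + b > 4$ is not actually needed for this bound — simple voting on the revealed set does not require connectivity of the unrevealed graph — but keeping the hypotheses of Theorem~\ref{th:default-rate} for a uniform statement does no harm.
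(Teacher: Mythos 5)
Your proposal is correct and follows essentially the same route as the paper: reduce to the entrywise error via Lemma~\ref{equivariance}, identify $\sigma(u)S(u)$ in distribution with $\sum_{i=1}^{\delta n/2}(W_i - Z_i)$, and apply a Chernoff bound optimized at $e^{\theta}=\sqrt{a/b}$ to obtain the exponent $\delta(\sqrt{a}-\sqrt{b})^2/2$ (the paper packages this computation as Lemma~\ref{lem:total-chernoff1} with $\alpha=0$, $\beta=1$, and keeps an $\eps\log n$ slack that it then sends to zero, whereas you bound $\Prob{\,\cdot\,\le 0}$ directly, which is equally valid here). Your side remark that $a+b>4$ is not needed for this particular bound is also accurate.
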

\begin{proof}
We first note that the simple voting estimator satisfies
\[
\hat{\sigma}_{S}(u) = \sgn(S(u)).
\]
As in Theorem \ref{th:default-rate}, we apply equivariance via Lemma \ref{equivariance} to show that the expected error rate is equal to the probability of misclassifying any node $u$
\[
\Ex{r(\hat{\sigma}_S)} = \Prob{\sigma(u)\hat{\sigma}_S(u) < 0} = \Prob{\sigma(u)S(u) < 0}.
\]
Note that $\sigma(u)S(u) = \sigma(u)\left(\sum_{v \in \R_+} A(u,v) - \sum_{v \in \R_-} A(u,v)\right)$ is equal in distribution to $\sum_{i=1}^{\delta n/2}(W_i - Z_i)$ where $W_i$ and $Z_i$ are i.i.d.~Bernoulli$(a\log n/n)$ and Bernoulli$(b\log n/n)$ respectively.
Then we apply the Chernoff bound in Lemma \ref{lem:total-chernoff1} with $\alpha = 0,\beta =1$ to conclude that there exists a $\eps > 0$ such that
\[
\Prob{\sigma(u)S(u) < 0} \leq \Prob{\sum_{i=1}^{\delta n/2}W_i - Z_i \leq \eps \log n} \leq n^{-I(0,1)+O(\eps)}
\]
where
\[
I(0,1)  = \frac{\delta}{2}\sup_{\theta > 0} (a+b) - ae^{-\theta} - be^{\theta}
\]
We set $\theta = \log(a/b)/2$ to get that $I(0,1)  \geq \delta(\sqrt{a}-\sqrt{b})^2/2$, which yields the desired bound
\[
\Ex{r(\hat{\sigma}_S)} = n^{-\delta(\sqrt{a}-\sqrt{b})^2/2 + O(\eps)}.
\]
The result follows by taking $\eps \rightarrow 0$.
\qedhere
\end{proof}

\begin{remark}
%========================================
We note that simple voting achieves exact recovery for sufficiently large values of $\delta$. Saad and Nosratinia~\cite{Nosratinia2018} previously showed that when $\log(1-\delta) = -c\log n$ for $c \in (0,1)$ and $\sqrt{a}-\sqrt{b} > \sqrt{2-2c}$, applying a union bound over $(1-\delta)n$ nodes we get an error rate of 
\[
\Ex{r(\hat{\sigma}_s)} \leq n^{1-c}\cdot \left(n^{-(1+o(1))(\sqrt{a}-\sqrt{b})^2/2}\right) = o(1).
\]
In this case, spectral methods are unnecessary as simple voting achieves exact recovery and voting has an $O(n)$ runtime advantage over spectral methods. This is also shown in more generality in Gaudio and Joshi~\cite{Gaudio2024}, with the threshold being extended to the case of unbalanced communities with distinct connectivity parameters under noisy labels.
\end{remark}

As neither $\hat{\sigma}_Q$ nor $\hat{\sigma}_S$ achieves the information-theoretically optimal recovery rate of $n^{-(1+o(1))(\sqrt{a}-\sqrt{b})^2/2}$ that some unsupervised methods do \cite{Abbe2020}, we also consider a weighted sum of the previous estimators that does achieve the optimal rate.

\begin{definition} (Mixed QSD method) \label{def:mixed-def} For each node $u \in \U$, the mixed QSD method is defined as
\[
M(u) = \bar{c}\gamma n \log n Q(u) -  S(u)
\]
with a corresponding estimator
\[
\hat{\sigma}_{M}(u) = \sgn(M(u))
\]
where
\begin{align*}
    &\rho = \frac{1}{2b}(a\delta + \sqrt{a^2\delta^2+4(1-\delta)b^2}) \\
    &\gamma = \sqrt{\frac{(1-\delta+\rho^2)}{2}} \\ 
    &\bar{c} = \frac{\norm{\bar{A}_i\bar{\pi}_i}_1}{\sqrt{n}\log n} = \frac{1}{4}\left(a(\rho + (1-\delta)^2) + b (\rho + 1)(1-\delta)\right)
\end{align*}
\end{definition}

\begin{theorem}\label{th:main-result}  
Under the conditions of Theorem \ref{th:default-rate}, the mixed estimator $\hat{\sigma}_M$ achieves an error rate of
\[
\Ex{r\left(\hat{\sigma}_{M}\right)} \leq n^{-(1+o(1))(\sqrt{a}-\sqrt{b})^2/2}+o(n^{-1}).
\]
In particular, the mixed QSD method achieves the optimal asymptotic error rate.
\end{theorem}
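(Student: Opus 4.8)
The plan is to follow the same scheme as in the proof of Theorem~\ref{th:default-rate}, but now with the extra freedom provided by the explicit choice of weights in $M(u) = \bar{c}\gamma n\log n\, Q(u) - S(u)$. By Lemma~\ref{equivariance}, it suffices to bound the entrywise error $\Prob{\sigma(u)M(u) < 0}$ for a fixed unrevealed node $u$. First I would substitute the left-eigenvector approximation \eqref{eq:left-proof} from Lemma~\ref{lem:left-eig-difference}: with probability $1-o(n^{-1})$,
\[
\max_{u\in\U}\left|(\mu_--\mu_+)(u) - \frac{\lambda_1}{\bar{c}\sqrt{n}\log n}\, d(u)(\pi_--\pi_+)(u)\right| = o(n^{-1}).
\]
Multiplying by $\bar{c}\gamma n\log n$ turns the approximation error into a term of order $o(\gamma\lambda_1\sqrt{n}) = o(\sqrt{n}\log n)$, which I would argue is negligible relative to the scale on which $S(u)$ and the leading part of $\bar{c}\gamma n\log n\,Q(u)$ fluctuate (roughly $\log n$). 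Concretely I would split $\Prob{\sigma(u)M(u)<0}$ into an event where the left-to-right eigenvector approximation error exceeds $\eps/n$ (contributing $o(n^{-1})$ by Lemma~\ref{lem:left-eig-difference}) and the complementary event where
\[
\sigma(u)\left(\gamma\lambda_1 \sqrt{n}\, d(u)(\pi_--\pi_+)(u) - S(u)\right) \le \eps_1\log n,
\]
mirroring \eqref{eq:main1}--\eqref{eq:main2}.

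Next I would feed the remaining term into the right-eigenvector analysis. The key point distinguishing this from Theorem~\ref{th:default-rate} is that the mixed statistic combines the quasi-stationary part, whose mean-field behaviour is governed by weights $(\alpha,\beta)=(\rho-1,\rho)$ on unrevealed and revealed nodes, with the simple-voting part, which subtracts off exactly the revealed-node contribution with weight corresponding to $(\alpha,\beta)=(0,1)$. I expect that the net effect of $-S(u)$ is to cancel the $\delta$-fraction revealed terms in the exponent of Lemma~\ref{lem:right-eig-error}, shifting the effective weights from $(\rho-1,\rho)$ to $(\rho-1,\rho-1)$, i.e. a uniform weight $\rho-1$ on all $n/2$ neighbours in each community. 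More precisely, I would show $\sigma(u)(\gamma\lambda_1\sqrt{n}\,d(u)(\pi_--\pi_+)(u) - S(u))$ is, up to lower-order corrections, equal in distribution to $(\rho-1)\sum_{i=1}^{n/2}(W_i - Z_i)$ (with $W_i\sim\mathrm{Bernoulli}(a\log n/n)$, $Z_i\sim\mathrm{Bernoulli}(b\log n/n)$), by writing $d(u)\pi_i(u)$ in terms of its mean-field value $\bar{d}\,\bar{\pi}_i(u)$ plus a fluctuation controlled by Lemma~\ref{lem:right-eig-error}, and then observing that the revealed part of the mean-field inner product is precisely what $S(u)$ supplies. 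Applying the Chernoff bound (Lemma~\ref{lem:total-chernoff1}, or the full-information bound as in Theorem~\ref{th:minimax}) to $(\rho-1)\sum(W_i-Z_i)$ and optimising over $\theta$ with the substitution $\theta\mapsto\theta/(\rho-1)$ gives
\[
\Prob{(\rho-1)\textstyle\sum_{i=1}^{n/2}(W_i-Z_i)\le \eps_1\log n} \le n^{-(1+o(1))(\sqrt{a}-\sqrt{b})^2/2},
\]
since the $\rho-1$ factor is absorbed by the change of variable and the resulting rate is the same $\tfrac12\sup_{\theta>0}\{a+b-ae^{-\theta}-be^{\theta}\} = (\sqrt{a}-\sqrt{b})^2/2$ appearing in Theorem~\ref{th:minimax}. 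Combining the two pieces and letting $\eps_1\to 0$ yields the claimed bound, and comparison with Theorem~\ref{th:minimax} gives optimality.

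The main obstacle I anticipate is making the ``cancellation of the revealed part'' step rigorous: one must verify that $S(u)$ is exactly (not just approximately) the term that, when subtracted, removes the $\delta n/2$ revealed summands from the mean-field expansion of $\gamma\lambda_1\sqrt{n}\,d(u)(\pi_--\pi_+)(u)$, and that the coefficient $\bar{c}\gamma n\log n$ in Definition~\ref{def:mixed-def} is precisely calibrated so that the scaling in \eqref{eq:left-proof} matches the integer-valued scale of $S(u)$. This requires tracking the constants $\bar{c} = \norm{\bar{A}_i\bar{\pi}_i}_1/(\sqrt{n}\log n)$ and $\gamma$ through the identity $\lambda_1\bar{\pi}_i = \bar{P}_i\bar{\pi}_i$ and checking that the unrevealed-node contributions come out with the common weight $\rho-1$. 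A secondary technical point is ensuring the leave-one-out fluctuation bounds from Lemma~\ref{lem:right-eig-error} remain $o(n^{-1})$ after multiplication by the large prefactor $\bar{c}\gamma n\log n$; this should follow because that lemma is stated at the scale $\sigma(u)\gamma\lambda_1\sqrt{n}\,d(u)(\pi_--\pi_+)(u)$, which is exactly the scale on which the mixed statistic lives after normalisation, so no loss is incurred.
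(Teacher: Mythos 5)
Your proposal follows essentially the same route as the paper: equivariance reduces the bound to the entrywise error, the statistic is split into the left-to-right eigenvector approximation error (an $o(n^{-1})$ event via Lemma~\ref{lem:left-eig-difference}) plus the right-eigenvector term, and Lemma~\ref{lem:right-eig-error} with $\gamma_s=-1$ shifts the weights from $(\rho-1,\rho)$ to $(\rho-1,\rho-1)$, whose Chernoff exponent reduces to $(\sqrt{a}-\sqrt{b})^2/2$ via the substitution $\theta\mapsto\theta/(\rho-1)$ (valid since $\rho>1$ for $a>b$, $\delta>0$), exactly as in the paper's appeal to Theorem~\ref{th:simple-voting}. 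One minor slip: the rescaled approximation error is $o(n^{-1})\cdot\bar{c}\gamma n\log n=o(\log n)$, not $o(\sqrt{n}\log n)$ as written --- the latter would swamp the $\log n$-scale signal --- but your event-splitting argument handles this term correctly regardless.
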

\begin{proof} 
The proof proceeds like in Theorem \ref{th:default-rate}. By equivariance, we reduce the expected error rate to the entrywise error rate for any unseen $u \in \U$
\[
\Ex{r\left(\hat{\sigma}_{M}\right)} = \Prob{\sigma(u)M(u) < 0}
\]
where 
\[
M(u)= \gamma\bar{c}n\log n(\mu_- - \mu_+)(u) - S(u).
\]
Then we expand out the QSD component $(\mu_- - \mu_+)(u)$ and approximate by right eigenvectors to get
\begin{align}
& \Prob{\sigma(u)M(u) < 0} \notag\\
  &\quad =  \Prob{\sigma(u)\left(\gamma\bar{c}n\log n(\mu_- - \mu_+)(u) - S(u)\right) < 0} \notag\\
  &\quad \leq \Prob{\sigma(u)\gamma\bar{c}n\log n\left((\mu_- - \mu_+)(u) - \frac{\lambda_1}{\bar{c}\sqrt{n}\log n}d(u)(\pi_--\pi_+)(u) \right) \leq -\eps \log n} \label{eq:mixed1}\\
 & \quad \quad + \Prob{\sigma(u)\left(\gamma\lambda_1\sqrt{n}d(u)(\pi_--\pi_+)(u) - S(u)\right)\leq \eps \log n} \label{eq:mixed2} 
\end{align}
We first bound the error of the right eigenvector approximation \eqref{eq:mixed1} by $o(n^{-1})$ exactly as in Theorem \ref{th:default-rate}. Next, we bound \eqref{eq:mixed2} with the right eigenvector bound in Lemma \ref{lem:right-eig-error} to see that for some $\eps > 0$
\[
\Prob{\sigma(u)\left(\gamma \lambda_1 \sqrt{n}d(u)(\pi_--\pi_+)(u) - S(u)\right) \leq \eps \log n} \leq n^{-I(\rho-1,\rho -1)+O(\eps)} + o(n^{-1})
\]
where
\[
   I(\rho-1,\rho-1) = \frac{1}{2} \left(\sup_{\theta > 0} a+b - ae^{-\theta(\rho -1)}-be^{\theta(\rho -1)}\right)
\]
This optimization problem is solved in Theorem \ref{th:simple-voting}, leading to a bound of 
\[
\Prob{\sigma(u)\left(\gamma \lambda_1 \sqrt{n}d(u)(\pi_--\pi_+)(u) - S(u)\right) \leq \eps \log n}  \leq n^{-(\sqrt{a}-\sqrt{b})^2/2+O(\eps)} + o(n^{-1})
\]
and an expected error rate of
\[
\Ex{r\left(\hat{\sigma}_{M}\right)} \leq n^{-(\sqrt{a}-\sqrt{b})^2/2+O(\eps)}+o(n^{-1}).
\]
The result follows by taking $\eps \rightarrow 0$.
\qedhere
\end{proof}

\section{Proofs}\label{sec:proofs}
%========================================
\subsection{Equivariance}
%========================================

In this section, we prove the equivariance lemma, showing that the probability of misclassifying any label is equal over all permutation and community equivariant estimators $\hat{\sigma}$.

\begin{lemma}\label{equivariance} 
For any unrevealed nodes $u,v \in \U$ and an equivariant estimator $\hat{\sigma}$
\[
\Prob{\hat{\sigma}(u) \neq \sigma(u)} = \Prob{\hat{\sigma}(v) \neq \sigma(v)}. 
\]
\end{lemma}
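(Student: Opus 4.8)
The plan is to exploit two symmetries of the $\pSBM$: exchangeability within each of the four blocks $\U_+,\R_+,\U_-,\R_-$, and invariance of the edge law under interchanging the two communities (the parameters $p,q$ depend on $\sigma$ only through the induced partition, not the signs). For fixed unrevealed $u,v$ I will build a permutation $\pi\in\Pi_n$ --- in one case paired with a global label flip --- that transports the misclassification event at $u$ to the one at $v$ while leaving the law of the observed data unchanged, and then conclude using permutation and community equivariance of $\hat{\sigma}$.

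First I would record the distributional identity underlying everything: if $\pi$ maps each of $\U_\pm,\R_\pm$ onto a block, then $A^\pi$ is a realization of $\pSBM(p,q,\sigma^\pi,\ell^\pi)$, since $A^\pi(\pi(w),\pi(x))=A(w,x)$ is $\mathrm{Bernoulli}(p)$ or $\mathrm{Bernoulli}(q)$ according to whether $\sigma^\pi(\pi(w))=\sigma^\pi(\pi(x))$; and because $\pSBM(p,q,-\sigma,-\ell)$ has the same law as $\pSBM(p,q,\sigma,\ell)$, we get $A^\pi\stackrel{d}{=}A$ whenever $(\sigma^\pi,\ell^\pi)\in\{(\sigma,\ell),(-\sigma,-\ell)\}$.

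Then I would split into two cases. If $u,v$ lie in the same community, then being unrevealed they lie in the same block $\U_+$ or $\U_-$, so take $\pi=(u\,v)$: it fixes every block setwise, giving $\sigma^\pi=\sigma$, $\ell^\pi=\ell$, $A^\pi\stackrel{d}{=}A$; permutation equivariance yields $\hat{\sigma}[A,\ell](u)=\hat{\sigma}[A^\pi,\ell](v)$, and since $\sigma(u)=\sigma(v)$ the two misclassification events coincide, so passing to probabilities and applying $A^\pi\stackrel{d}{=}A$ closes the case. If instead $u\in\U_+$ and $v\in\U_-$, the balance conditions $|\U_+|=|\U_-|$ and $|\R_+|=|\R_-|$ let me choose $\pi$ interchanging $\U_+\leftrightarrow\U_-$ and $\R_+\leftrightarrow\R_-$ bijectively with $\pi(u)=v$; then $\sigma^\pi=-\sigma$, $\ell^\pi=-\ell$, so $A^\pi\stackrel{d}{=}A$, and combining permutation equivariance with community equivariance gives $\hat{\sigma}[A,\ell](u)=-\hat{\sigma}[A^\pi,\ell](v)$, while $\sigma(u)=-\sigma(v)$; the two sign changes cancel, the events again coincide, and the claim follows as before.

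The argument is essentially bookkeeping, so there is no single hard step; the place that needs care is the cross-community case, where one must check simultaneously that swapping the communities preserves the edge law and that the sign flip community equivariance inserts into $\hat{\sigma}$ is exactly offset by the sign flip in $\sigma$ between $u$ and $v$.
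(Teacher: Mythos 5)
Your proposal is correct and follows essentially the same route as the paper: the same case split on whether $u,v$ share a community, the same choice of a transposition in the first case and a community-swapping permutation in the second, and the same combination of permutation equivariance, community equivariance, and distributional invariance of $A^\pi$. Your treatment of the cross-community case is, if anything, slightly more explicit than the paper's about why $\ell^\pi=-\ell$ and why the two sign flips cancel.
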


\begin{proof}
Let $\hat{\sigma}$ be a permutation and community equivariant estimator. We show that 
\[
\Prob{\hat{\sigma}(u) \neq \sigma(u)} = \Prob{\hat{\sigma}(v) \neq \sigma(v)}. 
\]
for $u,v \in \U$. We have two cases: either $u,v$ are in the same community and $\sigma(u) = \sigma(v)$ or $u,v$ are in different communities and $\sigma(u) \neq \sigma(v)$. For both cases, we show equality via a community-preserving permutation $\pi$ on $\U$ such that if $u,v$ are in the same community, then $\pi(u),\pi(v)$ remain in the same community, and $\pi(u) = v$, $\pi(v) = u$.

In the first case, where $\sigma(u) = \sigma(v)$, let $\pi \in \Pi_{|\U|}$ be the transposition $(u,v)$ that leaves all other nodes unchanged. In the case where $\sigma(u) \neq \sigma(v)$, let $\pi$ be the permutation that maps nodes in $\U_+$ to $\U_-$ and transposes $u,v$. Clearly both choices preserve communities and transpose $u,v$. In addition, since the underlying SBM is balanced with balanced label cardinalities, $A^\pi$ is identically distributed to $A$ in both cases. We also note that there exists $\tau \in \{-1,1\}$ such that $\ell^\pi = \tau\cdot \ell$ for both cases.

Applying permutation equivariance, community equivariance, and exchangeability we show the desired equality for both cases as follows
\begin{align*}
&\Prob{\hat{\sigma}[A,\ell](u) \neq \sigma(u)} \\
& \qquad \qquad = \Prob{\hat{\sigma}[A^\pi, \ell^\pi](\pi(u)) \neq \sigma(u)} \\
& \qquad \qquad= \Prob{\hat{\sigma}[A^\pi, \tau \cdot \ell](v) \neq \tau \cdot \sigma(v)} \\
& \qquad \qquad = \Prob{\hat{\sigma}[A, \tau \cdot \ell](v) \neq \tau \cdot \sigma(v)} \\
& \qquad \qquad= \Prob{\tau \cdot \hat{\sigma}[A,\ell](v) \neq \tau \cdot \sigma(v)} \\
& \qquad \qquad = \Prob{\hat{\sigma}[A,\ell](v) \neq \sigma(v)} 
\end{align*}

where the first equality follows by permutation equivariance, the second equality follows by choice of $\tau \in \{-1,1\}$, the third equality follows by exchangeability under community preserving transformations, and the fourth equality by community equivariance.

Therefore, for any $u,v \in \U$
\[
\Prob{\hat{\sigma}(u) \neq \sigma(u)} = \Prob{\hat{\sigma}(v) \neq \sigma(v)}. \qedhere
\]
\end{proof}

%========================================
\subsection{Normalized matrix bounds}
%========================================
We start by noting the following concentration result of Lei and Rinaldo~\cite{Lei2015} on the adjacency matrix.

\begin{theorem}[Lei and Rinaldo~\cite{Lei2015} Theorem 5.2]\label{th:lei-rinaldo}  
Let $\max_{u,v} \bar{A}(u,v) \geq k\log n/n$. For any $r > 0$, there exist $c_1,c_2 >0$ such that $\norm{A- \bar{A}}_2 < \max_{u,v} c_1(r,k)\sqrt{n\bar{A}(u,v)}$ with probability $1-c_2n^{-r}$.
\end{theorem}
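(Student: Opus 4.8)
This statement is quoted as Theorem~5.2 of Lei and Rinaldo~\cite{Lei2015}, so in the paper it serves as an external input rather than something to be proved from scratch; the plan below is how I would reconstruct its proof. Write $X = A - \bar{A}$, a symmetric matrix whose above-diagonal entries are independent, mean zero, bounded by $1$ in absolute value, with $\mathrm{Var}(X(u,v)) = \bar{A}(u,v)(1-\bar{A}(u,v)) \leq p_{\max} := \max_{u,v}\bar{A}(u,v)$, and by hypothesis $n p_{\max} \geq k\log n$. The goal is $\norm{X}_2 \leq c_1(r,k)\sqrt{n p_{\max}}$ with probability at least $1 - c_2 n^{-r}$. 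The first step is the standard $\varepsilon$-net reduction: fix a $1/2$-net $\mathcal{N}$ of the unit sphere $S^{n-1}$ with $|\mathcal{N}| \leq 5^{n}$; then $\norm{X}_2 \leq 4\max_{x,y\in\mathcal{N}} x^\top X y$, so it suffices to control the bilinear form $x^\top X y = \sum_{u,v} X(u,v)\,x_u y_v$ uniformly over the (exponentially many) pairs $(x,y)$ and absorb the resulting union bound.

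The second step is the light/heavy decomposition of Feige and Ofek. Call a pair $(u,v)$ \emph{light} if $|x_u y_v| \leq \sqrt{n p_{\max}}\,/\,n$ and \emph{heavy} otherwise. On the light part each summand is bounded by $\sqrt{n p_{\max}}/n$ in absolute value and the sum of variances is at most $p_{\max}\norm{x}_2^2\norm{y}_2^2 = p_{\max}$, so Bernstein's inequality bounds the light contribution by $C\sqrt{n p_{\max}}$ with failure probability $e^{-C' n}$, small enough to survive the union bound over the $5^{2n}$ pairs. The heavy part cannot be handled this way, since its tails are too heavy for a net bound; instead one first establishes, by a Chernoff bound for each pair of subsets $S,T\subseteq[n]$ followed by a union bound over all $2^{2n}$ such pairs, that with probability $1 - n^{-r}$ the graph has bounded edge discrepancy (no pair $(S,T)$ carries many more edges than its expectation $\bar{e}(S,T)$, across the relevant dyadic scales of the edge count), and then, conditioned on that event, a deterministic dyadic counting argument over the magnitudes of the $x_u$ and $y_v$ combined with Cauchy--Schwarz bounds the heavy contribution by $C\sqrt{n p_{\max}}$ for all unit $x,y$ simultaneously.

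Combining the two bounds and tracking constants so that the net radius and the discrepancy threshold are chosen as functions of $r$ and $k$ yields $\norm{X}_2 \leq 4(\text{light}+\text{heavy}) \leq c_1(r,k)\sqrt{n p_{\max}}$ on an event of probability $1 - c_2 n^{-r}$. I expect the heavy-pair estimate — the bounded-discrepancy event and the deterministic counting it feeds — to be the crux; the light part is routine Bernstein-type estimation. As a shortcut, Matrix Bernstein applied to $X = \sum_{u\leq v}(A(u,v)-\bar{A}(u,v))(e_u e_v^\top + e_v e_u^\top)$ gives $\norm{X}_2 \leq C(\sqrt{n p_{\max}\log n} + \log n)$ with probability $1-n^{-r}$, which in the regime where $p_{\max}$ is of order $\log n/n$ used in this paper is only a $\sqrt{\log n}$ factor worse and would already suffice wherever Theorem~\ref{th:lei-rinaldo} is invoked here; but for the sharp $\sqrt{n p_{\max}}$ bound I would simply cite~\cite{Lei2015}.
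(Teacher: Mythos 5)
The paper does not prove this statement at all; it is imported verbatim as Theorem~5.2 of Lei and Rinaldo, so the ``paper's proof'' is just the citation. Your reading of it as an external input is exactly right, and your reconstruction --- the $\varepsilon$-net reduction to bilinear forms, the Feige--Ofek light/heavy split with Bernstein on the light pairs and the bounded-discrepancy plus dyadic counting argument on the heavy pairs --- is a faithful sketch of the actual proof in that reference, including the correct identification of the heavy-pair discrepancy estimate as the crux. One cosmetic note: the displayed bound in the paper has the $\max_{u,v}$ misplaced; your interpretation $\norm{A-\bar{A}}_2 \leq c_1(r,k)\sqrt{n\,\max_{u,v}\bar{A}(u,v)}$ is the intended statement.
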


In particular, it is well-known~\cite{Abbe2020}  that the result holds for $r = 3$. For the expected adjacency matrix $\bar{A}$, the nodes have degrees of order $O(\log n)$. To extend Lei and Rinaldo to the normalized matrix $P$, we must show that the minimum degree is also of order $O(\log n)$ with high probability. 

\begin{lemma}\label{lem:min-degree} 
Let the adjacency matrix $A$ have connectivity parameters $a,b$ such that $a+b > 2$. Then there exist $c_1,c_2,c_3 > 0$ such that $d_{\min} \geq c_1\log n$ and $d_{\max} \geq c_2\log n$ with probability at least $1-c_3(n^{1-(a+b)/2})$.
\end{lemma}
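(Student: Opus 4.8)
The statement asks for a lower bound $d_{\min}\geq c_1\log n$ and a lower bound $d_{\max}\geq c_2\log n$ on every degree, holding simultaneously with probability at least $1-c_3 n^{1-(a+b)/2}$. Since $d_{\max}\geq d_{\min}$ deterministically, it suffices to prove the lower bound on $d_{\min}$: once we show $\Prob{d_{\min}\geq c_1\log n}\geq 1-c_3 n^{1-(a+b)/2}$, the same event forces $d_{\max}\geq d_{\min}\geq c_1\log n$, so we may take $c_2=c_1$. Thus the whole lemma reduces to a one-sided lower-tail bound on the minimum degree of the PL--SBM graph.

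\textbf{Key steps.} First I would fix a node $u\in\V$ and recall that its degree $d(u)$ is a sum of independent Bernoulli variables: $n/2-1$ (or $n/2$) of them with parameter $p=a\log n/n$ from the same community, and $n/2$ of them with parameter $q=b\log n/n$ from the other community. Hence $\Ex{d(u)}=(1+o(1))\bar d$ where $\bar d=(a+b)\log n/2$. Next I would apply a multiplicative Chernoff lower-tail bound: for any $\eta\in(0,1)$,
\[
\Prob{d(u)\leq (1-\eta)\Ex{d(u)}}\leq \exp\!\left(-\tfrac{\eta^2}{2}\Ex{d(u)}\right).
\]
Choosing, say, $\eta=1/2$ gives $\Prob{d(u)\leq \tfrac12\Ex{d(u)}}\leq \exp(-\tfrac18\Ex{d(u)}) = n^{-(1+o(1))(a+b)/16}$; to land on the exponent $1-(a+b)/2$ claimed in the statement one instead picks $\eta$ close to $1$, so that $(1-\eta)\Ex{d(u)}=c_1\log n$ for a small constant $c_1$ and the exponent in the Chernoff bound becomes $\tfrac{\eta^2}{2}\cdot\tfrac{(a+b)\log n}{2}(1+o(1))$, which is $\geq (a+b)\log n/2\cdot(1+o(1))$ when $\eta$ is sufficiently close to $1$ (equivalently $c_1$ sufficiently small), yielding a per-node failure probability of at most $n^{-(1+o(1))(a+b)/2}$. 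Then I would take a union bound over all $n$ nodes:
\[
\Prob{d_{\min}< c_1\log n}\;\leq\; n\cdot n^{-(1+o(1))(a+b)/2}\;=\;c_3\, n^{1-(a+b)/2},
\]
absorbing the $o(1)$ and the Chernoff prefactor into the constant $c_3$; this is exactly the claimed bound, and it is meaningful (smaller than $1$) precisely because $a+b>2$. Finally, on the complementary event $d_{\min}\geq c_1\log n$ we automatically get $d_{\max}\geq c_1\log n$, so setting $c_2=c_1$ completes the proof.

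\textbf{Main obstacle.} There is no deep obstacle here; the only care needed is bookkeeping to match the exponent in the statement. The multiplicative Chernoff bound is sharp enough that the exponent of the per-node lower-tail probability can be pushed arbitrarily close to $(a+b)/2$ only if we deflate the threshold $c_1\log n$ toward $0$ (taking $\eta\to 1$), so $c_1$ is forced to be a small positive constant rather than something like $\bar d/2$. One should also be slightly careful that the same-community row has $n/2-1$ potential neighbors rather than $n/2$, and that a revealed node still has the same degree distribution (revealing labels does not change $A$), so the bound is uniform over all $n$ nodes regardless of the partial labelling $\ell$; these are routine adjustments that do not affect the stated rate.
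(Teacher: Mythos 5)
Your proposal has two genuine gaps, one arithmetical and one interpretive.

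\textbf{The Chernoff exponent does not reach $(a+b)/2$.} You invoke the multiplicative Chernoff lower--tail bound
$\Prob{d(u)\leq(1-\eta)\Ex{d(u)}}\leq\exp\bigl(-\tfrac{\eta^2}{2}\Ex{d(u)}\bigr)$
and claim that taking $\eta$ close to $1$ drives the exponent $\tfrac{\eta^2}{2}\cdot\tfrac{(a+b)\log n}{2}$ up to $\tfrac{(a+b)\log n}{2}$. That would require $\eta^2\geq 2$, which is impossible for $\eta\in(0,1)$. In fact that bound plateaus at $\exp(-\Ex{d(u)}/2)\approx n^{-(a+b)/4}$ as $\eta\to 1$, and after the union bound over $n$ nodes you would only obtain $1-c_3 n^{1-(a+b)/4}$, a strictly weaker guarantee that is not $o(1)$ for $2<a+b\leq 4$. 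The fix is to use the sharp lower--tail (Bennett / relative--entropy) form, as the paper does: with $h(x)=(1+x)\log(1+x)-x$ one has $\Prob{d(u)\leq(1-\eta)\mu}\leq\exp(-\mu\,h(-\eta))$, and $h(-\eta)\to 1$ as $\eta\to 1$, which is precisely what makes the per--node failure probability approach $n^{-(a+b)/2}$ so that the union bound produces the claimed rate. This is the crux of the lemma, not mere bookkeeping.

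\textbf{The second inequality is a typo, and your shortcut misses the lemma's real content.} You correctly observe that, literally read, $d_{\max}\geq c_2\log n$ follows for free from $d_{\max}\geq d_{\min}$. But that should be a red flag: an estimate that is deterministically implied would never be stated ``with probability at least $1-c_3 n^{1-(a+b)/2}$.'' The paper's own proof establishes, by the symmetric Bennett upper--tail argument, the bound $d_{\max}\leq c_2\log n$ (with $c_2=(a+b)/2+t_1$), and this upper bound is what is actually invoked later, e.g.\ to control the condition number $\kappa(D_i)\leq d_{\max}/d_{\min}$ in Lemmas~\ref{lem:normalized-submatrix-norm},~\ref{lem:leave--one--out},~\ref{lem:mean-variance} and to bound $\norm{D_i}_\infty=d_{\max}$ in Lemma~\ref{lem:left-eig-difference}. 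So the intended statement is $d_{\max}\leq c_2\log n$, and your proof leaves that half entirely unaddressed. You need a separate upper--tail concentration step for $d_{\max}$, not the trivial deterministic comparison with $d_{\min}$.
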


\begin{proof} 
Since $a+b \geq 2$, $A$ is connected with high probability. By Bennett's inequality, for any $u \in V$, $t < (a+b)/2$, and $h(x) = (1+x)\log(1+x)-x$ we have
\begin{align*}
&\Prob{d(u) \leq \frac{a+b}{2}\log n - t\log n} \\
&\qquad \qquad = \Prob{\sum_{v \in V} A(u,v) \leq \frac{a+b}{2}\log n - t\log n} \\
&\qquad \qquad\leq \exp\left(-\left(\sum_{v \in V} \Var{A(u,v)}\right)h\left(\frac{-t\log n}{\sum_{v \in V} \Var{A(u,v)}}\right)\right) \\
&\qquad \qquad\leq \exp\left(-\left(\sum_{v \in V} \Ex{A(u,v)^2}\right)h\left(\frac{-t\log n}{\sum_{v \in V} \Ex{A(u,v)^2}}\right)\right) \\
&\qquad \qquad = \exp\left(-\log n\left(\frac{a+b}{2}\right)h\left(\frac{-2t}{a+b}\right)\right)
\end{align*}

In addition, we know that $\left(\frac{a+b}{2}\right)h\left(\frac{-2t}{a+b}\right)$ is increasing with $t$ and 
\[
\lim_{t \rightarrow (a+b)/2}\left(\frac{a+b}{2}\right)h\left(\frac{-2t}{a+b}\right) = \frac{a+b}{2}
\]
Then for any $\eps > 0$, there exists a $t < (a+b)/2$ such that for some $c_3 > 0$,
\[
\Prob{d(v) \leq \frac{a+b}{2}\log n - t\log n} \leq c_3n^{-(a+b)/2 + \eps}.
\]  
Finally, taking a union bound over $O(n)$ unrevealed entries gives us
\[
\Prob{d_{\min} \geq \frac{a+b}{2}\log n - t\log n} \geq 1- c_3n^{-(a+b)/2 + 1 + \eps}. 
\]
We conclude by defining $c_1 = \frac{a+b}{2}-t$.

Applying the same chain of reasoning for the maximum, we note that
\[
\Prob{d(u) \geq \frac{a+b}{2}\log n + t_1\log n} \leq c_3n^{-(a+b)/2 + \eps}
\]  for some choice of $t_1 >0$, which gives us the same union bound 
\[
\Prob{d_{\max} \leq \frac{a+b}{2}\log n + t_1\log n} \geq 1- c_3n^{-(a+b)/2 + 1 + \eps}. 
\]
where $c_2 := (a+b)/2 + t_1$. \qedhere
\end{proof}

Note that the probability $1-c_3(n^{-(a+b)/2+1})$ is $1-o(n^{-1})$ for $a+b > 4$. Denote by $P_i^m$ the leave--one--out matrix given by \[
P_i^m(u,v) =\begin{cases}
      P_i(u,v), & \text{if}\ u,v \neq m \\
      \bar{P}_{i}(u,v), & \text{otherwise}.
    \end{cases}
\]

\begin{lemma}\label{lem:normalized-submatrix-norm} 
Assume that $n \max_{u,v} A(u,v) > c_0 \log n$ for some $c_0 > 1$. Then with probability $1-o(n^{-1})$, for any $r > 0$, there exist constants $C, c_2 > 0$ such that 
\[
\norm{P_i - \bar{P}_i}_2 < \frac{c_2}{\sqrt{\log n}}
\quad\text{and}\quad
\max_{m \in \V_i} \norm{P_i^{m} - \bar{P}_i}_2 <  \frac{c_2}{\sqrt{\log n}}.
\]
\end{lemma}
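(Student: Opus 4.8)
The plan is to reduce the normalized bound to the unnormalized bound from Theorem~\ref{th:lei-rinaldo} plus the degree control from Lemma~\ref{lem:min-degree}. Write $P_i = D_i^{-1} A_i$ and $\bar P_i = \bar D_i^{-1}\bar A_i$, where $D_i$, $\bar D_i$ are the (diagonal) degree matrices restricted to $\V_i$. The first step is a triangle-inequality split,
\[
\norm{P_i - \bar P_i}_2
\le \norm{D_i^{-1}(A_i - \bar A_i)}_2
+ \norm{(D_i^{-1} - \bar D_i^{-1})\bar A_i}_2 .
\]
For the first term, since $D_i^{-1}$ is diagonal with operator norm $1/d_{\min}$, Lemma~\ref{lem:min-degree} gives $d_{\min}\ge c_1\log n$ with probability $1-o(n^{-1})$ (using $a+b>4$ so the failure probability is $o(n^{-1})$), and Theorem~\ref{th:lei-rinaldo} with $r=3$ gives $\norm{A_i - \bar A_i}_2 \le \norm{A - \bar A}_2 = O(\sqrt{n\cdot \log n/n}) = O(\sqrt{\log n})$ on the same high-probability event (restriction to a principal submatrix only decreases the spectral norm, by interlacing / the variational characterization). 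Hence this term is $O(\sqrt{\log n}/\log n) = O(1/\sqrt{\log n})$.

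For the second term, note $\norm{\bar A_i}_2 = O(\log n)$ (it is a rank-two block matrix whose nonzero eigenvalues are $\Theta(\log n)$ — indeed $\lambda_1(\bar A_i) = \Theta(\bar d)$ and $\bar d = (a+b)\log n/2$), and $D_i^{-1} - \bar D_i^{-1}$ is diagonal with entries $\frac{\bar d(u) - d(u)}{d(u)\bar d(u)}$. So its operator norm is at most $\frac{\max_u |d(u) - \bar d(u)|}{d_{\min}\,\bar d_{\min}}$. Here I would use a Bennett/Chernoff bound on $|d(u)-\bar d(u)|$ together with a union bound over the $O(n)$ nodes to get $\max_u |d(u) - \bar d(u)| = O(\sqrt{\log n \cdot \log n}) = O(\log n)$ with probability $1-o(n^{-1})$ — more precisely $O(\sqrt{(\log n)(\log n)})$ from the standard deviation $\sqrt{\bar d} = \Theta(\sqrt{\log n})$ scaled up by the $\sqrt{\log n}$ deviation level needed for the union bound. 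Combining, the second term is $O\big(\frac{\log n \cdot \log n}{(\log n)(\log n)}\big)$ — and a careful accounting of the actual deviation size $|d(u)-\bar d(u)| = O(\sqrt{(\log n)^{1+\eta}})$ for the union bound shows this is also $O(1/\sqrt{\log n})$ (one can absorb any polylog slack into the constant $c_2$, since the statement only claims an $O(1/\sqrt{\log n})$ rate). Choosing $c_2$ to dominate both contributions completes the bound for $P_i$.

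For the leave-one-out matrix $P_i^m$, the argument is essentially identical: $P_i^m$ differs from $\bar P_i$ only in the rows and columns indexed by $\V_i\setminus\{m\}$, and on those rows it equals $D_i^{-1}A_i$ restricted appropriately, so I would write $P_i^m - \bar P_i$ as a sum of the same two types of terms (a diagonal-scaled centered adjacency block and a diagonal-perturbation term), bound each by the quantities already controlled above (which are uniform in $m$ since they rest on the global events $d_{\min}\ge c_1\log n$ and $\max_u|d(u)-\bar d(u)| = O(\mathrm{polylog})$), and take a union bound over the $O(n)$ choices of $m$ — harmless against the $o(n^{-1})$ slack. The main obstacle is the second (degree-perturbation) term: Theorem~\ref{th:lei-rinaldo} is a black box that only controls $\norm{A-\bar A}_2$, and one must separately argue that the degree fluctuations, after being divided by $d_{\min}\bar d_{\min} = \Theta((\log n)^2)$ but multiplied by $\norm{\bar A_i}_2 = \Theta(\log n)$, still yield a $1/\sqrt{\log n}$-type bound; this is where the precise large-deviation rate for $\max_u|d(u)-\bar d(u)|$ (via Bennett, as in Lemma~\ref{lem:min-degree}) has to be invoked carefully, and it is the only place the condition $a+b>4$ (rather than merely $a+b>2$) is genuinely needed to push all failure probabilities below $o(n^{-1})$.
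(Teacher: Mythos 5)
Your first term and your reduction of the leave--one--out statement to the main one are fine (the paper handles $P_i^m$ even more cheaply, by noting that $P_i^m-\bar P_i$ is row/column $m$ of $P_i-\bar P_i$ zeroed out, hence a principal submatrix whose spectral norm is dominated by $\norm{P_i-\bar P_i}_2$ deterministically, so no union bound over $m$ is needed). The gap is in the second, degree--perturbation term. You bound $\norm{(D_i^{-1}-\bar D_i^{-1})\bar A_i}_2$ by $\norm{\bar A_i}_2\cdot\max_u|d(u)-\bar d(u)|/(d_{\min}\bar d_{\min})$ and then claim $\max_u|d(u)-\bar d(u)|=O((\log n)^{(1+\eta)/2})$ with probability $1-o(n^{-1})$. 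That concentration statement is false: $d(u)$ is a sum of $n$ Bernoullis with mean $\Theta(\log n)$, so a deviation of size $t=o(\log n)$ has probability roughly $\exp(-t^2/(2c\log n))$, which is not $o(n^{-2})$ unless $t=\Omega(\log n)$; surviving the union bound over $n$ nodes forces $\max_u|d(u)-\bar d(u)|=\Theta(\log n)$. Plugging that in, your bound for the second term is $O(\log n)\cdot O(\log n)/(\log n)^2=O(1)$, which does not give the required $O(1/\sqrt{\log n})$. You correctly flagged this as the delicate point, but the proposed rescue does not work.

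The paper's way around this is to avoid the $\ell_\infty$ control of degree fluctuations entirely: it bounds $\norm{(D_i^{-1}-\bar D_i^{-1})\bar A_i}_2$ by its Frobenius norm, which reduces to the \emph{aggregate} quantity $\bigl(\sum_u(d(u)-\bar d(u))^2\bigr)^{1/2}=\norm{(A_i-\bar A_i)\Ind{}}_2\le\sqrt{n}\,\norm{A_i-\bar A_i}_2=O(\sqrt{n\log n})$ by Lei--Rinaldo. This $\ell_2$ aggregate is a factor $\sqrt{\log n}$ smaller than the naive $\sqrt{n}\cdot\max_u|d(u)-\bar d(u)|=O(\sqrt{n}\log n)$, and that is exactly the factor you are missing: with it, the second term becomes $\sqrt{n}\max_{u,v}\bar A(u,v)\cdot O(\sqrt{n\log n})/(d_{\min}\bar d_{\min})=O(1/\sqrt{\log n})$. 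So the missing idea is to exploit cancellation in the degree fluctuations through the spectral norm of $A-\bar A$ rather than a per-node Bennett bound.
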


\begin{proof} 
We begin by noting that the first statement implies the second since for any $m \in \U$, $(P_i^{m} - \bar{P}_i)$ is a principal submatrix of  $(P_i - \bar{P}_i)$ and the $p,q$-norms for a principal submatrix $G \in \mathbb{R}^{k \times k}$ is bounded by that of the matrix $H \in \mathbb{R}^{n \times n}$, that is,
\[
\norm{G}_{q \rightarrow p} = \max_{x \in \mathbb{R}^{k}} \frac{\norm{G x}_{p}}{\norm{x}_{q}} = \max_{x \in \mathbb{R}^{k}} \frac{\norm{H (x, \mathbf{0}_{n-k})}_{p}}{\norm{x}_{q}} \leq \max_{y \in \mathbb{R}^{n}} \frac{\norm{H y}_{p}}{\norm{y}_{q}}  \leq \norm{H}_{q \rightarrow p}.
\]

Next, we see that 
\[
\norm{P_i - \bar{P}_i}_2 = \norm{D_i^{-1} A_i - \bar{D}_i^{-1}\bar{A}_i}_2  \leq \norm{D_i^{-1}(A_i-\bar{A}_i)}_2 + \norm{(\bar{D}_i^{-1}-D_i^{-1})\bar{A}_i}_2  \\
\]
The first term is bounded by Theorem \ref{th:lei-rinaldo} and the general fact that norms of principal submatrices are strictly smaller than norms of the original matrix.
\[
\norm{D_i^{-1}(A_i-\bar{A}_i)}_2 \leq \frac{\norm{(A_i-\bar{A}_i)}_2}{d_{\min}} \leq  \frac{\norm{(A-\bar{A})}_2}{d_{\min}} = \frac{c_2\sqrt{\log n}}{\log n} \leq \frac{c_2}{\sqrt{\log n}}
\]
with probability $1-o(n^{-1})$ given an appropriate choice of $c_2$ from Lemma \ref{lem:min-degree}.

We note that the degree $d_i(u) = d(u)$ when defined. For the second term, applying the same results tells us that with probability $1-o(n^{-1})$ for some $c_3$ we have
\begin{align*}
\norm{(\bar{D}_i^{-1}-D_i^{-1})\bar{A}_i}_2 &\leq \norm{(\bar{D}_i^{-1}-D_i^{-1})\bar{A}_i}_F \\
&= \sqrt{\sum_{u,v \in \V_i} \bar{A}_i(u,v)^2 \left(\frac{1}{d(u)} - \frac{1}{\bar{d}(v)}\right)^2}\\
&\leq \max_{u,v} \bar{A}(u,v) \sqrt{\sum_{u,v \in \V_i}\left(\frac{d(u)- \bar{d}(u)}{d(u)\bar{d}(u)}\right)^2} \\
&\leq \frac{\max_{u,v} \bar{A}(u,v)}{d_{\min}\bar{d}_{\min}} \sqrt{\sum_{u,v \in \V_i}(d(u)- \bar{d}(u))^2}\\
& \leq \frac{\sqrt{n}\max_{u,v} \bar{A}(u,v)}{d_{\min}\bar{d}_{\min}} \sqrt{\sum_{v \in \V_i}(d(v)- \bar{d}(v))^2}\\
&= \frac{\sqrt{n}\max_{u,v} \bar{A}(u,v)}{d_{\min}\bar{d}_{\min}} \norm{(A_i - \bar{A}_i)\Ind{}}_2 \\
&= \frac{\sqrt{n}\max_{u,v} \bar{A}(u,v)}{d_{\min}\bar{d}_{\min}} \norm{(A_i - \bar{A}_i)}_2 \cdot \norm{\Ind{}}_2 \\
& \leq \frac{C (n\max_{u,v} \bar{A}(u,v))^{\frac{3}{2}}}{\min(d_{\min},\bar{d}_{\min})^2} \\
&\leq \frac{c_3}{\sqrt{\log n}}. \qedhere
\end{align*} 

\end{proof}

%========================================
\subsection{Eigenvalue bounds}
%========================================
The approximation $\pi_i \approx P_i \bar{\pi}_i/\lambda_1$ requires bounding the norm of $P_i$ and the corresponding eigengap. Bounding the eigengap of the normalized submatrix first requires showing that the principal eigenvalue is bounded above and below by positive constants. 

\begin{lemma}\label{lem:eig-lbound}
When a $\delta$--fraction of nodes have been revealed, the expected substochastic matrix $\bar{P}_i$ has principal eigenvalue $\lambda_1$ such that $2/(2+\delta)\leq \lambda_1 \leq 1$.
\end{lemma}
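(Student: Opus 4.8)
The plan is to exploit the rank-two structure of $\bar{P}_i$ to reduce the principal eigenvalue to a $2\times 2$ computation, get the upper bound from substochasticity, and reduce the lower bound to a scalar inequality in $a,b,\delta$.

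First I would record that, since every expected degree equals $\bar{d}=(a+b)\log n/2$, we have $\bar{P}_i=\bar{d}^{-1}\bar{A}_i$, whose entries take only the two values $a\log n/n$ (between two nodes both in $\U_i$ or both in $\C_{-i}$) and $b\log n/n$ (otherwise). Hence $\bar{P}_i$ has rank at most two and its range is contained in the plane $\mathrm{span}\{\Ind{\U_i},\Ind{\C_{-i}}\}$, so its nonzero eigenvalues are those of the map it induces there, which on the basis $\{\Ind{\U_i},\Ind{\C_{-i}}\}$ acts by
\[
\bar{P}_i\Ind{\U_i}=\tfrac{a(1-\delta)}{a+b}\Ind{\U_i}+\tfrac{b(1-\delta)}{a+b}\Ind{\C_{-i}},\qquad \bar{P}_i\Ind{\C_{-i}}=\tfrac{b}{a+b}\Ind{\U_i}+\tfrac{a}{a+b}\Ind{\C_{-i}}.
\]
So $\lambda_1$ is the larger root of the characteristic polynomial $\lambda^2-\frac{a(2-\delta)}{a+b}\lambda+\frac{(1-\delta)(a-b)}{a+b}$ of the matrix $\frac{1}{a+b}\begin{pmatrix}a(1-\delta)&b\\ b(1-\delta)&a\end{pmatrix}$, namely
\[
\lambda_1=\frac{a(2-\delta)+\sqrt{a^2\delta^2+4(1-\delta)b^2}}{2(a+b)}=\frac{a(1-\delta)+b\rho}{a+b},
\]
the second form holding because $\rho$ from Definition~\ref{def:mf-eig} is precisely the ratio of the two entries of the Perron eigenvector $\bar{\pi}_i\propto\Ind{\U_i}+\rho\,\Ind{\C_{-i}}$.

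For the upper bound, $\bar{P}_i$ is the principal submatrix of the row-stochastic matrix $\bar{P}$ obtained by deleting the rows and columns indexed by $\R_i$; deleting nonnegative entries only decreases row sums, so $\bar{P}_i\mathbf{1}\le\mathbf{1}$ entrywise, and since $\bar{P}_i\ge 0$ the Perron--Frobenius estimate $\lambda_1\le\max_u(\bar{P}_i\mathbf{1})_u$ gives $\lambda_1\le 1$. For the lower bound I would use Collatz--Wielandt: $\lambda_1\ge\min_u(\bar{P}_i x)_u/x_u$ for every strictly positive $x$, and with $x=s\,\Ind{\U_i}+t\,\Ind{\C_{-i}}$, optimizing over the ratio $t/s>0$ (equivalently, plugging the closed form above into $\lambda_1\ge 2/(2+\delta)$ and clearing denominators) reduces the claim to the single scalar inequality
\[
(1+\delta)\,a^2-2ab-(3+\delta)\,b^2\ \ge\ 0 ,
\]
i.e.\ $a\ge\frac{3+\delta}{1+\delta}\,b$. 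I expect establishing this last inequality to be the crux, since it is not implied by assortativity ($a>b$) alone; completing the proof requires invoking whatever quantitative relation between $a$ and $b$ is assumed in the regime under study (absent that, the unconditional estimate $\lambda_1\ge\min_u(\bar{P}_i\mathbf{1})_u=1-\tfrac{\delta\max(a,b)}{a+b}>1-\delta$ is all that follows, so $2/(2+\delta)$ would have to be relaxed to $1-\delta$). Everything else --- the two-dimensional reduction, the characteristic-polynomial algebra, and the substochasticity bound --- is routine.
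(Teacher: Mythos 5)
Your setup is the same as the paper's: both reduce $\bar{P}_i$ to the $2\times 2$ system on $\mathrm{span}\{\Ind{\U_i},\Ind{\C_{-i}}\}$, identify the Perron eigenvector as $\Ind{\U_i}+\rho\,\Ind{\C_{-i}}$, and obtain $\lambda_1\le 1$ from substochasticity. The difference is in the lower bound, and there you are right and the paper is not. The paper normalizes the eigenvector as $x\Ind{\U_i}+y\Ind{\C_{-i}}$ with $(1-\delta)x+y=1$, sums the two eigenvalue equations to get $\lambda_1=1/(1+\delta x)$, and then asserts that $0<x<y$ forces $\lambda_1>2/(2+\delta)$. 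But $\lambda_1\ge 2/(2+\delta)$ is equivalent to $x\le 1/2$, i.e.\ to $y\ge(1+\delta)x$, i.e.\ to $\rho\ge 1+\delta$; the hypothesis $x<y$ only gives $x<1/(2-\delta)$ and hence the strictly weaker bound $\lambda_1>1-\delta/2$. Your reduction of the claim to $(1+\delta)a^2-2ab-(3+\delta)b^2\ge0$, i.e.\ $a\ge\tfrac{3+\delta}{1+\delta}\,b$, is exactly the condition $\rho\ge 1+\delta$ (the quadratic in $a$ has positive root $a=\tfrac{3+\delta}{1+\delta}b$ since $1+(1+\delta)(3+\delta)=(2+\delta)^2$), and it is genuinely needed: for $a=2$, $b=1$, $\delta=0.1$ your closed form gives $\lambda_1\approx 0.9513$, while $2/(2+\delta)\approx 0.9524$. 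So the ``crux'' you flag is not a gap in your argument but a missing hypothesis in the lemma itself, which assumes nothing beyond $a>b$ and $a+b>4$.

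Two further remarks. First, your unconditional fallbacks are valid, and the sharp unconditional bound is $\lambda_1>1-\delta/2$ (approached as $a\downarrow b$), which is what the paper's manipulation actually proves. Second, the constant $2/(2+\delta)$ is consumed by Lemma \ref{lem:spectral-gap}: substituting the correct bound $1-\delta/2$ into that computation yields only $\lambda_1-\lambda_2\ge 0$. The eigengap can instead be recovered directly from your characteristic polynomial, since $\lambda_1-\lambda_2=\sqrt{a^2\delta^2+4(1-\delta)b^2}\,/(a+b)\ge a\delta/(a+b)>\delta/2$ whenever $a>b$, which is in fact stronger than the $\delta^2/(2+\delta)$ claimed there. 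So your analysis both locates the error and points to the cleanest repair.
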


\begin{proof}
The eigenvector $\pi_i$ is proportional to $x \Ind{\U_i} + y \Ind{\C_{-i}}$,  where $x,y$ are normalized such that $(1-\delta)x + y = 1$. Then solving for the eigenvalue explicitly, 
\begin{align*}
& (1-\delta)x\frac{a}{a+b} + y\frac{b}{a+b} = \lambda_1 x \\
& (1-\delta)x\frac{b}{a+b} + y\frac{a}{a+b} = \lambda_1 y 
\end{align*}
Summing the two equations and using the relationship of $x,y$ and $\delta$ gives us
\[
1 = (1-\delta)x + y = \lambda_1(x+y) =  \lambda_1(1+\delta x). 
\]
Therefore, since $0 < x < y$ and $0 < \delta < 1$ we get
\[
1 \geq \lambda_1= \frac{1}{(1+\delta x)} >  \frac{2}{(2+\delta)}.\qedhere 
\]
\end{proof}

As the normalized expectation is a rank-2 matrix, a bound on the eigengap follows naturally.

\begin{lemma}\label{lem:spectral-gap} 
For the matrix $\bar{P}_i$, the eigengap $\lambda_1 - \lambda_2 \geq  \delta^2/(2+\delta)$.
\end{lemma}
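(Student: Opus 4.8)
The plan is to reduce the statement to the eigenvalues of an explicit $2\times 2$ matrix and then bound the gap by elementary estimates. First, since every node has the same expected degree $\bar d = (a+b)\log n/2$, the restricted expected degree matrix is $\bar D_i = \bar d\, I$, so $\bar P_i = \bar d^{-1}\bar A_i$ is symmetric with real spectrum, equal to $\bar d^{-1}$ times the spectrum of $\bar A_i$. The matrix $\bar A_i$ is block-constant on the two blocks $\U_i$ (of size $(1-\delta)n/2$) and $\C_{-i}$ (of size $n/2$), and any vector summing to zero on each block lies in its kernel; hence the only possibly nonzero eigenvalues come from the $2$-dimensional invariant subspace spanned by $\Ind{\U_i}$ and $\Ind{\C_{-i}}$. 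Computing the action of $\bar P_i$ on this subspace — the same computation underlying Lemma~\ref{lem:eig-lbound} — produces the compressed matrix
\[
B=\frac{1}{a+b}\begin{bmatrix}(1-\delta)a & b\\ (1-\delta)b & a\end{bmatrix},
\]
whose two eigenvalues, together with zeros, constitute the full spectrum of $\bar P_i$.

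Next I would identify $\lambda_1,\lambda_2$ with the eigenvalues of $B$. Here $\tr B=(2-\delta)a/(a+b)>0$ and $\det B=(1-\delta)(a-b)/(a+b)\ge 0$, so both eigenvalues of $B$ are nonnegative and are therefore exactly the two largest eigenvalues of $\bar P_i$; consequently $\lambda_1-\lambda_2=\sqrt{(\tr B)^2-4\det B}$. A short computation, using the identity $(2-\delta)^2-4(1-\delta)=\delta^2$, simplifies the discriminant and yields
\[
\lambda_1-\lambda_2=\frac{\sqrt{a^2\delta^2+4(1-\delta)b^2}}{a+b},
\]
which is the same radical appearing in the definition of $\rho$.

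For the final bound I would use crude estimates: $\sqrt{a^2\delta^2+4(1-\delta)b^2}\ge a\delta$, so $\lambda_1-\lambda_2\ge a\delta/(a+b)$, and since $0<\delta<1$ and $b\le a$ we have $b\delta< a<2a$, i.e.\ $a(2+\delta)\ge \delta(a+b)$, which rearranges exactly to $a\delta/(a+b)\ge \delta^2/(2+\delta)$. I expect the only genuine care-point to be the identification of $\lambda_2$ with the smaller eigenvalue of $B$ rather than with a spurious zero eigenvalue of the rank-deficient matrix $\bar P_i$; this is precisely where the sign condition $a\ge b$ (equivalently $\det B\ge 0$) is used, and in the degenerate case $a\le b$ one instead has $\lambda_2=0$ and the bound follows immediately from $\lambda_1\ge 2/(2+\delta)$ in Lemma~\ref{lem:eig-lbound}. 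Everything else is routine algebra.
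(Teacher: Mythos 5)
Your proof is correct, but it reaches the bound by a genuinely different route than the paper. Both arguments begin from the same rank-two reduction: the nonzero spectrum of $\bar P_i$ comes from the invariant subspace spanned by $\Ind{\U_i}$ and $\Ind{\C_{-i}}$, i.e.\ from the compressed matrix $B$. From there the paper avoids the discriminant entirely: it uses only the trace identity $\lambda_1+\lambda_2=\tr(\bar P_i)=(2-\delta)a/(a+b)$ together with the lower bound $\lambda_1\ge 2/(2+\delta)$ already established in Lemma~\ref{lem:eig-lbound}, so that $\lambda_1-\lambda_2=2\lambda_1-\tr(\bar P_i)\ge \frac{4}{2+\delta}-(2-\delta)=\frac{\delta^2}{2+\delta}$, the last step using only $a/(a+b)\le 1$. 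Your route computes the gap exactly, $\lambda_1-\lambda_2=\sqrt{(\tr B)^2-4\det B}=\sqrt{a^2\delta^2+4(1-\delta)b^2}\,/(a+b)$, which is strictly stronger than the stated inequality and makes visible that the eigengap is exactly the radical appearing in $\rho$ (indeed $\lambda_1-\lambda_2=2b\rho/(a+b)-a\delta/(a+b)+a\delta/(a+b)$-type identities follow), a structural observation the paper's proof hides. You are also more careful on the one point the paper glosses over: the identity $\lambda_1+\lambda_2=\tr(\bar P_i)$, with $\lambda_2$ the second-largest eigenvalue, silently assumes both eigenvalues of $B$ are nonnegative, i.e.\ $\det B=(1-\delta)(a-b)/(a+b)\ge 0$, which is where the standing assumption $a>b$ enters; you flag this and dispose of the degenerate case via $\lambda_1\ge 2/(2+\delta)\ge\delta^2/(2+\delta)$. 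The trade-off is only length: the paper's trace argument is three lines and recycles Lemma~\ref{lem:eig-lbound}, whereas yours needs the determinant computation and the final elementary comparison $a\delta/(a+b)\ge\delta^2/(2+\delta)$, both of which check out.
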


\begin{proof}  
The trace of the expected transition matrix $\bar{P}$ is $2a/(a+b)$. In addition, $ \lambda_1 + \lambda_2 = \tr(\bar{P}_i) = (a + (1-\delta)a)/(a+b)$. From Lemma \ref{lem:eig-lbound}, $\lambda_1 > 2/(2+\delta)$. Since $b \geq 0$, the eigengap is bounded by 
\[
\lambda_1 -\lambda_2 = 2\lambda_1 - \tr(\bar{P}_i) \geq  \frac{4}{2+\delta} - \frac{a + (1-\delta)a}{a+b} \geq \frac{4}{2+\delta} - (2-\delta) = \frac{\delta^2}{2+\delta}. \qedhere
\]
\end{proof}

We also consider a bound on the principal eigenvalue of $P_i$.

\begin{lemma}\label{lem:min-degree-perturbation}
There exists a constant $C > 0$ such that $|\lambda_1(P_i) - \lambda_1| \geq C/\sqrt{\log n}$ with probability $1-o(n^{-1})$.
\end{lemma}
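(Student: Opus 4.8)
The plan is to pass to a symmetric model and then extract a structural discrepancy that forces a separation of order $1/\sqrt{\log n}$. Since $P = D^{-1}A$ is conjugate (by $D^{1/2}$) to the symmetric matrix $D^{-1/2}AD^{-1/2}$, its principal submatrix $P_i = D_i^{-1}A_i$ is similar to $S_i := D_i^{-1/2}A_iD_i^{-1/2}$, and likewise $\bar P_i$ is similar to $\bar S_i := \bar D_i^{-1/2}\bar A_i\bar D_i^{-1/2}$; hence $\lambda_1(P_i) = \lambda_1(S_i)$ and $\lambda_1 = \lambda_1(\bar S_i)$ are real and realized as Rayleigh-quotient maxima, and Weyl's inequality applies to $S_i - \bar S_i$. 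I would first record, by the argument of Lemma~\ref{lem:normalized-submatrix-norm} adapted to the symmetric normalization, that $\norm{S_i - \bar S_i}_2 = O(1/\sqrt{\log n})$ with probability $1 - o(n^{-1})$; the one-sided Weyl bound then gives only $|\lambda_1(P_i) - \lambda_1| \le O(1/\sqrt{\log n})$, so the real content of the lemma is the matching lower bound on this deviation.

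To produce the lower bound I would exploit that $S_i$ has a vanishing diagonal — the $\pSBM$ has no self-loops, so $A(u,u) = 0$ — whereas $\bar S_i$ has a strictly positive diagonal: by the $\otimes \mathbf{J}$ structure of $\bar A$ one has $\tr(\bar P_i) = \tr(\bar S_i) = (2-\delta)a/(a+b)$, a positive constant. Combining this with the constant eigengap $\lambda_1 - \lambda_2 \ge \delta^2/(2+\delta)$ of the rank-two matrix $\bar S_i$ (Lemma~\ref{lem:spectral-gap}) and the fact that its unit top eigenvector $\bar v_i$ is the spread-out vector of Definition~\ref{def:mf-eig} (entries of order $n^{-1/2}$), the first-order perturbation identity reads
\[
\lambda_1(S_i) = \lambda_1 + \bar v_i^\top (S_i - \bar S_i)\,\bar v_i + O\!\left(\frac{\norm{S_i - \bar S_i}_2^2}{\lambda_1 - \lambda_2}\right),
\]
with remainder $O(1/\log n)$. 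I would split the linear term as $\bar v_i^\top(\Ex{S_i} - \bar S_i)\bar v_i + \bar v_i^\top(S_i - \Ex{S_i})\bar v_i$. The deterministic piece is a Jensen-type bias: $x \mapsto x^{-1/2}$ is convex, so $\Ex{d(u)^{-1/2}} > \bar d(u)^{-1/2}$, and together with the diagonal discrepancy this produces a shift of definite sign whose magnitude is governed by the second-order term of $\Ex{d(u)^{-1/2}}$ (recall $\Var{d(u)} \asymp \bar d(u) \asymp \log n$). The fluctuation piece has variance $O(1/(n\log n))$ by an entrywise computation, hence is $o(1/\sqrt{\log n})$ with probability $1 - o(n^{-1})$ (bounded differences / Bernstein), so it cannot cancel the deterministic shift; collecting terms then yields $|\lambda_1(P_i) - \lambda_1| \ge C/\sqrt{\log n}$.

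The main obstacle is precisely to pin down the order of the deterministic shift $\bar v_i^\top(\Ex{S_i} - \bar S_i)\bar v_i$: one must carry the second-order expansion of $\Ex{d(u)^{-1/2}}$ through the two-block structure of $\bar v_i$ with its explicit $a,b,\delta,\rho$ weights and verify that the resulting quadratic form is of order exactly $1/\sqrt{\log n}$ rather than of smaller order. A clean way to organize this is to compute the two distinct (constant) row-sum deficits of $\bar S_i$ on $\U_i$ and on $\C_{-i}$ and compare them entrywise to the corresponding empirical deficits, which reduces the question to a scalar estimate. The auxiliary facts needed — $d_{\min}, d_{\max} \asymp \log n$ (Lemma~\ref{lem:min-degree}), $\lambda_1 \in [2/(2+\delta), 1]$ (Lemma~\ref{lem:eig-lbound}), and the Lei--Rinaldo bound (Theorem~\ref{th:lei-rinaldo}) — are already available, so the work is concentrated entirely in this scalar accounting.
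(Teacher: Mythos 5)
The inequality direction in the statement is a typo: it should read $|\lambda_1(P_i) - \lambda_1| \leq C/\sqrt{\log n}$, not $\geq$. You can see this from how the lemma is invoked downstream --- in Lemma~\ref{lem:norm-eigenvector} it is used to conclude $\lambda_1(P_i)\ge c_2$ for a constant $c_2>0$, and in Lemma~\ref{lem:perturbed-eigenvector} it is used to bound $\bigl|\tfrac{1}{\lambda_1(P_i)}-\tfrac{1}{\lambda_1}\bigr| = O(1/\sqrt{\log n})$ --- both of which only follow from an \emph{upper} bound on the deviation. The paper's own proof is exactly the two sentences you wrote in your first paragraph: Weyl's inequality (after conjugating by $D^{1/2}$ to make the matrices symmetric, a step the paper elides) combined with the spectral-norm bound of Lemma~\ref{lem:normalized-submatrix-norm} gives $|\lambda_1(P_i) - \lambda_1|\le \norm{P_i-\bar P_i}_2 = O(1/\sqrt{\log n})$. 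That is the entirety of the intended argument.

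Everything after your first paragraph is chasing the misprint, and moreover would not close even on its own terms. Track the orders: the Jensen/diagonal biases you identify are far too small. With $d(u)\asymp \log n$ and $\Var{d(u)}\asymp \log n$, the multiplicative bias in $\Ex{d(u)^{-1/2}}$ relative to $\bar d^{-1/2}$ is $1+O(1/\log n)$; pushed through $\bar v_i^\top(\Ex{S_i}-\bar S_i)\bar v_i$ with $n^{-1/2}$ entries this lands at order $1/\log n$, not $1/\sqrt{\log n}$. The diagonal discrepancy is even smaller: the diagonal of $\bar S_i$ is $O(1/n)$, so that contribution to the quadratic form is $O(1/n)$. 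And the mean-zero fluctuation $\bar v_i^\top(S_i - \Ex{S_i})\bar v_i$ has standard deviation $O(1/\sqrt{n\log n})$. None of these is $\Theta(1/\sqrt{\log n})$, so the claimed lower bound on the deviation does not materialize from this decomposition --- consistent with the fact that no lower bound is needed or asserted once the sign is corrected.
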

\begin{proof}
By Weyl's inequality between eigenvalues and Lemmas \ref{lem:normalized-submatrix-norm} and \ref{lem:eig-lbound}, for some $c_1 > 0$ with probability $1-o(n^{-1})$ we have
\[
\lambda_1(P_i) \geq \lambda_1(\bar{P}_i) - \lambda_1(P_i-\bar{P}_i) \geq \lambda_1(\bar{P}_i) - \norm{P_i-\bar{P}_i}_2 \geq \lambda_1 - C/\sqrt{\log n}
\]
for sufficiently large $n$. The other direction follows analogously. \qedhere
\end{proof} 

\subsection{Bounding binomial differences}
%========================================
In this section, we bound the error rate of the mean-field approximation for the QSD and mixed methods by reducing bounds on the transition submatrices $P_i$ to bounds on the adjacency submatrices $A_i$. We note that using the approximations $A_i \bar{\pi}_i$ instead of $\mu_i$ and scaling by $\sqrt{n}$ gives the following expression an unrevealed node $u \in \U$
\begin{align}
\sigma(u)(\sqrt{n}\gamma(A_-\bar{\pi}_- -A_+\bar{\pi}_+)(u)+ \gamma_sS(u)) \label{eq:difference}
\end{align}
where $\gamma_s$ is a weight on the simple voting component. By Definitions \ref{def:qsd-def} and \ref{def:mixed-def}, we note that $\gamma_s = 0$ for the QSD method and that $\gamma_s = -1$ for the mixed method.

The following lemma bounds the error rate for each unrevealed node.

\begin{lemma}\label{lem:mean-field-error} 
For each unrevealed $u \in \U$, there exists $\eps > 0$  such that for any constant $\gamma_s$
\[
\Prob{\sigma(u)\left(\gamma\sqrt{n}(A_-\bar{\pi}_- -A_+\bar{\pi}_+)(u)+ \gamma_s S(u)\right) \leq \eps \log n}  \leq n^{-I(\rho-1,\rho+\gamma_s) + O(\eps)}
\]
where 
\[
I(\alpha,\beta) = \frac{1}{2}\sup_{\theta > 0} a+b - (1-\delta)ae^{-\theta \alpha} -(1-\delta)be^{\theta \alpha} -\delta ae^{-\theta \beta} - \delta be^{\theta \beta}.
\]
\end{lemma}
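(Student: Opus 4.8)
The plan is to rewrite the quantity $\sigma(u)\bigl(\gamma\sqrt{n}(A_-\bar{\pi}_- -A_+\bar{\pi}_+)(u)+ \gamma_s S(u)\bigr)$ as a signed sum of independent Bernoulli contributions indexed by the other nodes, and then apply a Chernoff/exponential-moment bound to the lower-tail probability. First I would fix $u\in\U$, say $\sigma(u)=1$ without loss of generality (the case $\sigma(u)=-1$ is symmetric since $\bar\pi_+$ and $\bar\pi_-$ swap roles). Using Definition \ref{def:mf-eig}, $\bar{\pi}_i = \tfrac{1}{\gamma\sqrt n}(\Ind{\U_i}+\rho\,\Ind{\C_{-i}})$, so $\gamma\sqrt n\,(A_i\bar\pi_i)(u) = \sum_{v\in\U_i}A(u,v) + \rho\sum_{v\in\C_{-i}}A(u,v)$, and likewise $S(u)=\sum_{v\in\R_+}A(u,v)-\sum_{v\in\R_-}A(u,v)$. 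Collecting the coefficient multiplying each $A(u,v)$ according to which of the four blocks $\U_+,\U_-,\R_+,\R_-$ the node $v$ lies in (and recalling $u\in\C_+$, so $\C_+$-neighbors are within-community and $\C_-$-neighbors are across), one finds the coefficients are: $-(\rho-1)$ on the $\U_+$ block, $+(\rho-1)$ on the $\U_-$ block, $-(\rho+\gamma_s)$ on the $\R_+$ block, $+(\rho+\gamma_s)$ on the $\R_-$ block (up to an overall sign/normalization that I would track carefully). Here the edges from $u$ into $\U_+$ and $\R_+$ are $\mathrm{Bernoulli}(a\log n/n)$ and those into $\U_-$ and $\R_-$ are $\mathrm{Bernoulli}(b\log n/n)$, all independent, with $|\U_i|=(1-\delta)n/2$ and $|\R_i|=\delta n/2$.

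The event in question then becomes (after dividing by the positive normalization and absorbing it into $\eps$) a lower-tail event for a weighted sum $T$ of these independent Bernoullis with weights $\pm(\rho-1)$ on the $\U$-blocks and $\pm(\rho+\gamma_s)$ on the $\R$-blocks. I would apply the standard Chernoff bound: for any $\theta>0$,
\[
\Prob{T \le \eps\log n} \le e^{\theta\eps\log n}\,\Ex{e^{-\theta T}}
= n^{\theta\eps}\prod_{v} \Ex{e^{-\theta w_v A(u,v)}}.
\]
Each factor $\Ex{e^{-\theta w A(u,v)}} = 1 + p(e^{-\theta w}-1)$ with $p\in\{a\log n/n,\ b\log n/n\}$, and $1+x\le e^x$ gives $\prod_v \Ex{e^{-\theta w_v A(u,v)}} \le \exp\bigl(\tfrac{\log n}{n}\sum_v c_v(e^{-\theta w_v}-1)\bigr)$. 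Summing the four blocks with their cardinalities, the exponent equals $-\tfrac{\log n}{2}\bigl(a+b - (1-\delta)ae^{-\theta(\rho-1)} - (1-\delta)be^{\theta(\rho-1)} - \delta a e^{-\theta(\rho+\gamma_s)} - \delta b e^{\theta(\rho+\gamma_s)}\bigr)$, which is exactly $-I$-shaped with $\alpha=\rho-1$ and $\beta=\rho+\gamma_s$. Optimizing over $\theta>0$ and collecting the $n^{\theta\eps}$ factor into an $O(\eps)$ error in the exponent yields $\Prob{\cdot}\le n^{-I(\rho-1,\rho+\gamma_s)+O(\eps)}$, which is the claim.

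The main obstacles I anticipate are bookkeeping rather than conceptual. The first is getting the block coefficients and the overall normalization exactly right — in particular verifying that the weight on the revealed blocks is precisely $\rho+\gamma_s$ (so that $\gamma_s=0$ recovers the pure-QSD rate $I(\rho-1,\rho)$ of Theorem \ref{th:default-rate} and $\gamma_s=-1$ recovers $I(\rho-1,\rho-1)$ used in Theorem \ref{th:main-result}), and handling the $\sigma(u)=-1$ case by the $\pm$ symmetry. The second is a minor subtlety in exchanging the supremum over $\theta$ with the $O(\eps)$ slack: since $\eps$ is an arbitrarily small constant and the Chernoff exponent is continuous in $\theta$, one picks a near-optimal $\theta^\star$ (finite, since each $e^{-\theta(\rho-1)}$, $e^{\theta(\rho-1)}$ etc. forces the exponent to $-\infty$ as $\theta\to\infty$ when $\rho>1$, and the $\rho>1$ or $\gamma_s$-sign issues can be absorbed because we only need one feasible $\theta$) and absorbs $\theta^\star\eps$ into $O(\eps)$. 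I would also note that the requirement $a+b>4$ (hence $d_{\min}=\Theta(\log n)$ with probability $1-o(n^{-1})$, Lemma \ref{lem:min-degree}) is not actually needed for this particular lemma since we work directly with $A_i\bar\pi_i$ and exact Bernoulli moments — the lemma is a clean large-deviations statement, and the normalized-matrix machinery only enters when Lemma \ref{lem:left-eig-difference} and Lemma \ref{lem:right-eig-error} translate it back to $\mu_i$ and $\pi_i$.
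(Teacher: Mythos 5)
Your proposal is correct and follows essentially the same route as the paper: expand $\gamma\sqrt{n}\,\bar\pi_i$ into indicators, regroup the edge sums into the four blocks $\U_+,\U_-,\R_+,\R_-$ with weights $\rho-1$ and $\rho+\gamma_s$, identify the result as a weighted difference of independent binomials, and apply the exponential-moment/Chernoff bound with $1+x\le e^x$ (the paper isolates this last step as Lemma~\ref{lem:total-chernoff1}, which you inline). The only bookkeeping item is the one you already flag: for $\sigma(u)=1$ the block coefficients come out with the opposite signs to the ones you wrote (e.g.\ $+(\rho-1)$ on $\U_+$), which is needed so that the event is a genuine lower-tail deviation of a positive-mean sum.
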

\begin{proof} We note by Definition \ref{def:pl-sbm} that $S(u) = \left(\sum_{v \in \R_+} A(u,v) - \sum_{v \in \R_-} A(u,v)\right)$ where $|\R_+| = |\R_-| = \delta n/2$. In addition, following Definition \ref{def:mf-eig}, $\gamma\sqrt{n} \bar{\pi}_i = \Ind{\U_i} + \rho \Ind{\C_{-i}}$. We show that for each $u \in \U$, the expression in \eqref{eq:difference} is equal in distribution to a difference of binomials. Rearranging the eigenvector difference \\ $\sigma(u)\left(\sqrt{n}\gamma(A_-\bar{\pi}_- -A_+\bar{\pi}_+)(u)\right)$ yields
\begin{align*}
&\sigma(u)\left(\sqrt{n}\gamma(A_-\bar{\pi}_- -A_+\bar{\pi}_+)(u)\right) \\
&\quad =\sigma(u)\left(\sum_{v \in \C_+} \rho A(u,v) + \sum_{v \in \U_-} A(u,v) - \sum_{v \in \C_-} \rho A(u,v) - \sum_{v \in \U_+} A(u,v)  \right) \\
&\quad =\sigma(u)\left((\rho-1)\left(\sum_{v \in \U_+} A(u,v)  - \sum_{v \in \U_-} A(u,v)\right)  +\rho\left(\sum_{v \in \R_+} A(u,v) - \sum_{v \in \R_-} A(u,v)\right) \right)
\end{align*}
Then we add in the $\sigma(u)\gamma_sS(u)$ term and show that the sum is equal in distribution to
\begin{align*}
&\sigma(u)\left((\rho-1)\left(\sum_{v \in \U_+} A(u,v)  - \sum_{v \in \U_-} A(u,v)\right)  +(\rho+\gamma_s)\left(\sum_{v \in \R_+} A(u,v) - \sum_{v \in \R_-} A(u,v)\right) \right)\\
&\quad \overset{d}{=} (\rho-1)\sum_{i=1}^{(1-\delta)n/2} (W_{n/2+1-i} - Z_{n/2+1-i}) + (\rho + \gamma_s)\sum_{i=1}^{\delta n/2}(W_i - Z_i)
\end{align*}
where the $W_i$, $Z_i$ are i.i.d.~Bernoulli$(a\log n/n)$ and Bernoulli$(b\log n/n)$ respectively. We apply Lemma \ref{lem:total-chernoff1} below with $\alpha = \rho-1$, $\beta = \rho+\gamma_s$ and choose $\eps(a,b) > 0$ such that
\[
\Prob{\sigma(u)(\gamma\sqrt{n}(A_-\bar{\pi}_- - A_+\bar{\pi}_+) + \gamma_s S)(u) \leq \eps\log n} \leq n^{-I(\rho -1,\rho+\gamma_s) + O(\eps)}. 
\]
for each $u \in \U$, where
\[
I(\alpha,\beta) = \frac{1}{2}\sup_{\theta > 0} a+b - (1-\delta)ae^{-\theta \alpha} -(1-\delta)be^{\theta \alpha} -\delta ae^{-\theta \beta} - \delta be^{\theta \beta}. \qedhere
\]
\end{proof}

We show the Chernoff bound for the difference of binomials in the following lemma.

\begin{lemma}\label{lem:total-chernoff1} 
Suppose $a > b$, the variables $W_i$ and $Z_i$ are i.i.d.~Bernoulli$(a\log n/n)$ and Bernoulli$(b\log n/n)$ and let $\delta \in (0,1)$. Then for any choice of $\gamma_s$, there exists an $\eps > 0$ independent of $n$ such that
\[
\Prob{\alpha \sum_{i=1}^{(1-\delta)n/2} (W_{n/2+1-i} - Z_{n/2+1-i}) + \beta \sum_{i=1}^{\delta n/2}(W_i - Z_i) \leq \eps \log n} \leq n^{-I(\alpha,\beta) +O(\eps)} 
\]
where $\rho$ is defined in Definition \ref{def:mf-eig} and
\[
I(\alpha,\beta) = \frac{1}{2}\sup_{\theta > 0} a+b - (1-\delta)ae^{-\theta \alpha} -(1-\delta)be^{\theta \alpha} -\delta ae^{-\theta \beta} - \delta be^{\theta \beta}.
\]
\end{lemma}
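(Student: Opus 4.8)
The plan is to use a standard Chernoff/exponential-moment bound on the left tail of the weighted difference of independent Bernoulli sums, and then to verify that the resulting rate function has the claimed form with an $O(\eps)$ correction. Write $T = \alpha \sum_{i=1}^{(1-\delta)n/2} (W_{n/2+1-i} - Z_{n/2+1-i}) + \beta \sum_{i=1}^{\delta n/2}(W_i - Z_i)$. First I would fix $\theta > 0$ and apply Markov's inequality to $e^{-\theta T}$, using independence of all the $W_i, Z_i$ to factor the moment generating function into a product over the $(1-\delta)n/2$ ``unrevealed-type'' coordinates (each contributing $\Ex{e^{-\theta\alpha W}}\Ex{e^{\theta\alpha Z}}$) and the $\delta n/2$ ``revealed-type'' coordinates (each contributing $\Ex{e^{-\theta\beta W}}\Ex{e^{\theta\beta Z}}$). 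For a Bernoulli$(x\log n/n)$ variable $B$ we have $\Ex{e^{sB}} = 1 + \frac{x\log n}{n}(e^s - 1) \le \exp\!\left(\frac{x\log n}{n}(e^s-1)\right)$.

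Next I would multiply out: taking logarithms, the bound becomes
\[
\Prob{T \le \eps\log n} \le \exp\!\left(\theta\eps\log n + \tfrac{(1-\delta)n}{2}\cdot\tfrac{\log n}{n}\big(a(e^{-\theta\alpha}-1) + b(e^{\theta\alpha}-1)\big) + \tfrac{\delta n}{2}\cdot\tfrac{\log n}{n}\big(a(e^{-\theta\beta}-1) + b(e^{\theta\beta}-1)\big)\right).
\]
Collecting the $\log n$ factor and simplifying the constant-$(a+b)$ terms, the exponent equals $-\log n\big(I_\theta(\alpha,\beta) - \theta\eps\big)$ where
\[
I_\theta(\alpha,\beta) = \tfrac12\big(a+b - (1-\delta)ae^{-\theta\alpha} - (1-\delta)be^{\theta\alpha} - \delta ae^{-\theta\beta} - \delta be^{\theta\beta}\big).
\]
Since this holds for every $\theta > 0$, I would optimize: pick $\theta^\star$ achieving (or approaching) $\sup_{\theta>0} I_\theta(\alpha,\beta) = I(\alpha,\beta)$, giving $\Prob{T \le \eps\log n} \le n^{-I(\alpha,\beta) + \theta^\star\eps} = n^{-I(\alpha,\beta)+O(\eps)}$, with the implied constant depending only on $a,b,\delta,\alpha,\beta$. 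Finally I would check that the supremum is finite and attained at some finite $\theta^\star > 0$: as $\theta \to \infty$ the terms $-(1-\delta)be^{\theta\alpha}$ and $-\delta be^{\theta\beta}$ dominate (for $\alpha,\beta$ in the relevant sign regime, or by symmetry of the other exponentials) and drive $I_\theta \to -\infty$, while at $\theta = 0$ we get $I_0 = 0$, so the sup over $\theta > 0$ is a positive finite number attained in the interior whenever $a \neq b$.

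The main obstacle — really the only subtlety — is handling the sign of $\alpha = \rho - 1$, which need not be positive, and of $\beta = \rho + \gamma_s$, which for the mixed method is $\rho - 1$ and for simple voting is $\rho$; one must confirm that for each relevant $(\alpha,\beta)$ the function $\theta \mapsto I_\theta(\alpha,\beta)$ still tends to $-\infty$ as $\theta \to \infty$ so that a finite maximizer exists and the bound is non-trivial (i.e.\ $I(\alpha,\beta) > 0$). This follows because the four exponential terms have exponents $\pm\theta\alpha$ and $\pm\theta\beta$, so unless $\alpha = \beta = 0$ at least one of $e^{\theta\alpha}, e^{-\theta\alpha}, e^{\theta\beta}, e^{-\theta\beta}$ grows exponentially in $\theta$; combined with the first-derivative computation at $\theta = 0$, which is $\tfrac12(a-b)(\alpha + \beta) $ up to sign bookkeeping and is nonzero in the cases we use, this guarantees $I(\alpha,\beta) > 0$. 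Everything else is the routine exponential-moment calculation sketched above.
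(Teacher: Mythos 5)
Your proposal is correct and follows essentially the same route as the paper: a Markov/Chernoff bound on the exponential moment of the negated sum, factorization of the MGF by independence, the bound $\Ex{e^{sB}} \leq \exp\bigl(\tfrac{x\log n}{n}(e^s-1)\bigr)$ for Bernoulli variables, and optimization over $\theta$. Your additional remarks on the signs of $\alpha,\beta$ and the positivity of $I(\alpha,\beta)$ are sensible sanity checks but are not needed for the lemma as stated, since the bound is trivially valid even when the supremum is zero.
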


\begin{proof} 
We apply a Chernoff bound to see that for $\theta > 0$
\begin{align*}
&\Prob{\alpha\sum_{i=1}^{(1-\delta)n/2} (W_{n/2+1-i} - Z_{n/2+1-i}) + \beta \sum_{i=1}^{\delta n/2}(W_i - Z_i) \leq \eps \log n} \\
 &\qquad =\Prob{\alpha\sum_{i=1}^{(1-\delta)n/2} (Z_{n/2+1-i} - W_{n/2+1-i}) + \beta \sum_{i=1}^{\delta n/2}(Z_i - W_i) \geq -\eps \log n}\\
&\qquad\leq \Ex{e^{\theta(\alpha\sum_{i=1}^{(1-\delta)n/2}(Z_i - W_i)+\beta\sum_{i=1}^{\delta n/2}(Z_i - W_i))}} e^{\theta\eps\log n}\\
&\qquad\leq \Ex{e^{\theta\alpha\sum_{i=1}^{(1-\delta)n/2}(Z_{n/2+1-i} - W_{n/2+1-i})}}\Ex{e^{\theta\beta\sum_{i=1}^{\delta n/2}(Z_i - W_i)}} e^{\theta\eps\log n}\\
&\qquad= e^{\theta\eps\log n} \cdot \prod_{i=1}^{(1-\delta)n/2}\Ex{e^{\theta \alpha(Z_i-W_i)}}\cdot \prod_{i=1}^{\delta n/2} \Ex{e^{\theta \beta (Z_i - W_i)}}
\end{align*}
We note that \[
\log(\Ex{e^{k Z_i}}) = \log(e^{k} \cdot b\frac{\log n}{n} + 1-b\frac{\log n}{n}) \leq (e^{k}-1)\cdot b\frac{\log n}{n} 
\]
and that \[
\log(\Ex{e^{k W_i}}) = \log(e^{k} \cdot a\frac{\log n}{n} + 1-a\frac{\log n}{n}) \leq (e^{k}-1)\cdot a\frac{\log n}{n}. 
\] 
Then taking a log of the bound tells us that 
\begin{align*}
&\log \Prob{\alpha\sum_{i=1}^{(1-\delta)n/2}(Z_{n/2+1-i} - W_{n/2+1-i}) + \beta\sum_{i=1}^{\delta n/2}(Z_i - W_i) \geq -\eps \log n}  \\
&\qquad \leq \frac{\log n}{2}\left((1-\delta)a(e^{-\theta \alpha}-1)+(1-\delta)b(e^{\theta \alpha}-1)+ \delta a(e^{-\theta \beta}-1)+\delta b(e^{\theta \beta}-1) +2\theta\eps \right) \\
&\qquad \leq \frac{\log n}{2}\left((1-\delta)ae^{-\theta \alpha}+(1-\delta)be^{\theta \alpha}+ \delta ae^{-\theta \beta}+\delta be^{\theta \beta} -(a+b) + 2\theta \eps\right)
\end{align*}

Then substituting
\[
I(\alpha,\beta) = \frac{1}{2}\sup_{\theta > 0} a+b - (1-\delta)ae^{-\theta \alpha} -(1-\delta)be^{\theta \alpha} -\delta ae^{-\theta \beta} - \delta be^{\theta \beta}
\]
gives us the desired upper bound of 
\[
\Prob{\alpha\sum_{i=1}^{(1-\delta)n/2} (W_{n/2+1-i} - Z_{n/2+1-i}) + \beta \sum_{i=1}^{\delta n/2}(W_i - Z_i) \leq \eps \log n}  \leq n^{-I(\alpha,\beta)+O(\eps)}. \qedhere
\]

\end{proof}

\subsection{Entrywise bounds}
%========================================
In this section, we develop the entrywise bound that states $\norm{\pi_i -P_i\bar{\pi}_i/\lambda_1}_\infty = o(n^{-1})$ with high probability. We build off the techniques in Deng et al.~\cite{Deng2021} to prove this formally.

Following the leave--one--out technique from \cite{Abbe2020}, we first bound the difference between the eigenvectors of $P_i$ and $P_i^m$. Let $\pi_i^m$ be normalized so that $\norm{\pi_i^m}_2 = 1$ is the eigenvector of $P_i^m$ from the leave--one--out technique, where $m \in \U$ and
\[
P_i^m(u,v) =\begin{cases}
      P_i(u,v), & \text{if}\ u,v \neq m \\
      \bar{P}_{i}(u,v), & \text{otherwise}.
    \end{cases}
\]

As in Abb\'e et al.~\cite{Abbe2020}, we will use the Davis--Kahan theorem to bound the eigenvector perturbation by matrix perturbations for symmetric matrices. However, because the transition matrices $P_i = D^{-1}A$ are not symmetric, a generalization of Davis--Kahan is necessary. We use a simplified version of Theorem 3 in Deng et al.~\cite{Deng2021} for subspaces of dimension 1.

\begin{theorem}[Generalized Davis-Kahan]\label{th:Generalized-DK}
Let $M,N$ be symmetric matrices with $N$ positive definite, let $u$ be the principal eigenvector of $N^{-1}M$, let $\hat{u}$ be any vector, let $\hat{\lambda}$ be a constant, and let $\kappa(N)$ be the condition number of $N$. Then the angle $\theta = \arccos(\langle u, \hat{u}\rangle)$ between the two vectors satisfies
\[
\sin(\theta) \leq \frac{\kappa(N)||(N^{-1}M-\hat{\lambda}I)\hat{u}||_2}{|\hat{\lambda} -\lambda_2(N^{-1}M)|}.
\]
\end{theorem}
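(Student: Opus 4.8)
The plan is to reduce to the classical symmetric Davis--Kahan $\sin\theta$ bound by symmetrizing $N^{-1}M$. Since $N$ is symmetric positive definite, the square root $N^{1/2}$ is well defined and invertible, so
\[
\widetilde{M} := N^{-1/2}MN^{-1/2}
\]
is symmetric and $N^{-1}M = N^{-1/2}\widetilde{M}N^{1/2}$ is similar to it. In particular $N^{-1}M$ has the same (real) spectrum as $\widetilde{M}$, so $\lambda_2(N^{-1}M)$ is a well-defined real number, and if $q_1$ is a unit principal eigenvector of $\widetilde{M}$ then $u$ is proportional to $N^{-1/2}q_1$, equivalently $q_1$ is proportional to $N^{1/2}u$. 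I would also assume, as holds in the intended application, that $\hat{u}$ is a unit vector; the stated bound is otherwise off by a factor $\norm{\hat{u}}_2$.

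First I would record the one-vector form of the symmetric Davis--Kahan theorem: for symmetric $A$ with top eigenvector $v_1$, any nonzero $w$, and any scalar $\hat{\lambda}$,
\[
\sin\angle(v_1,w) \;\le\; \frac{\norm{(A-\hat{\lambda}I)w}_2}{\norm{w}_2\,\min_{j\ge 2}\lvert\lambda_j(A)-\hat{\lambda}\rvert},
\]
which follows by expanding $w$ in an orthonormal eigenbasis of $A$ and comparing, on both sides, the components orthogonal to $v_1$. I then apply this with $A=\widetilde{M}$, $v_1=q_1$, and test vector $w=N^{1/2}\hat{u}$, using the identity
\[
(\widetilde{M}-\hat{\lambda}I)N^{1/2}\hat{u} \;=\; N^{1/2}(N^{-1}M-\hat{\lambda}I)\hat{u},
\]
together with $\norm{N^{1/2}x}_2\le\lambda_{\max}(N)^{1/2}\norm{x}_2$ for the numerator and $\norm{N^{1/2}\hat{u}}_2\ge\lambda_{\min}(N)^{1/2}$ for the denominator. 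This produces
\[
\sin\angle\bigl(q_1,\,N^{1/2}\hat{u}\bigr) \;\le\; \sqrt{\kappa(N)}\cdot\frac{\norm{(N^{-1}M-\hat{\lambda}I)\hat{u}}_2}{\min_{j\ge 2}\lvert\lambda_j(N^{-1}M)-\hat{\lambda}\rvert},
\]
i.e.\ one factor of $\sqrt{\kappa(N)}$ from the distortion of the residual under $N^{1/2}$.

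The last step converts the angle between the transformed vectors $q_1\propto N^{1/2}u$ and $N^{1/2}\hat{u}$ back to $\theta=\angle(u,\hat{u})$. For this I would prove the elementary distortion bound $\sin\angle(Tx,Ty)\le\kappa(T)\sin\angle(x,y)$ for any invertible $T$: restricting $T$ to the plane $V=\operatorname{span}(x,y)$, the parallelogram spanned by $Tx,Ty$ has area $\sigma_1(T|_V)\sigma_2(T|_V)$ times that spanned by $x,y$, while $\norm{Tx}_2\ge\sigma_2(T|_V)\norm{x}_2$ and likewise for $y$; dividing the two area expressions leaves the factor $\sigma_1(T|_V)/\sigma_2(T|_V)\le\kappa(T)$. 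Taking $T=N^{-1/2}$ — so $\kappa(T)=\sqrt{\kappa(N)}$ — and $x=q_1$, $y=N^{1/2}\hat{u}$ gives $\sin\theta\le\sqrt{\kappa(N)}\,\sin\angle(q_1,N^{1/2}\hat{u})$. Combining with the previous display, the two factors of $\sqrt{\kappa(N)}$ multiply to $\kappa(N)$, and since $\hat{\lambda}$ approximates the top eigenvalue one has $\min_{j\ge 2}\lvert\lambda_j(N^{-1}M)-\hat{\lambda}\rvert=\lvert\hat{\lambda}-\lambda_2(N^{-1}M)\rvert$, which yields the claimed inequality.

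The argument is essentially mechanical once the symmetrization is in place, so the only real obstacle is the bookkeeping of the conditioning: $\kappa(N)$ must appear exactly once — not $\sqrt{\kappa(N)}$ or $\kappa(N)^2$ — and it does so precisely because it enters once when passing from the residual of $\hat{u}$ under $N^{-1}M$ to that of $N^{1/2}\hat{u}$ under $\widetilde{M}$, and once again when undoing the angular distortion of the map $N^{-1/2}$. A minor point worth flagging is that the denominator $\lvert\hat{\lambda}-\lambda_2(N^{-1}M)\rvert$ should be read as $\min_{j\ge 2}\lvert\hat{\lambda}-\lambda_j(N^{-1}M)\rvert$; the two coincide whenever $\hat{\lambda}\ge\lambda_2(N^{-1}M)$, which is the only regime used here.
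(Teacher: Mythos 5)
The paper does not actually prove this statement: Theorem \ref{th:Generalized-DK} is imported as a one-dimensional specialization of Theorem 3 of Deng, Ling, and Strohmer \cite{Deng2021}, so there is no in-paper argument to compare against. Your symmetrization proof is correct and is essentially the standard route (and the one underlying the cited source): conjugate to the symmetric matrix $N^{-1/2}MN^{-1/2}$, apply the one-vector residual bound in its orthonormal eigenbasis, and pay one factor of $\sqrt{\kappa(N)}$ for the distortion of the residual under $N^{1/2}$ and a second factor of $\sqrt{\kappa(N)}$ for the angular distortion of $N^{-1/2}$ when converting $\angle(q_1,N^{1/2}\hat{u})$ back to $\angle(u,\hat{u})$; these multiply to the stated $\kappa(N)$. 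The area argument for $\sin\angle(Tx,Ty)\leq\kappa(T)\sin\angle(x,y)$ is sound, as is the identity $(\widetilde{M}-\hat{\lambda}I)N^{1/2}\hat{u}=N^{1/2}(N^{-1}M-\hat{\lambda}I)\hat{u}$. The two caveats you flag are the right ones and are harmless for how the theorem is used in the paper: $\hat{u}$ is always a unit vector in the applications ($\bar{\pi}_i$, $\pi_i$ are $\ell_2$-normalized), and $\hat{\lambda}$ is always close to the top eigenvalue, so $|\hat{\lambda}-\lambda_2(N^{-1}M)|$ coincides with $\min_{j\geq 2}|\hat{\lambda}-\lambda_j(N^{-1}M)|$.
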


\begin{lemma}\label{lem:leave--one--out} 
For some $C > 0$, with probability $1-o(n^{-1})$ we have
\[
\max_{m \in \V_i} \norm{\pi_i - \pi_i^m}_2 \leq C\norm{\pi_i}_\infty.
\]
\end{lemma}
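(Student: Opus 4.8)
The plan is to follow the leave--one--out strategy of Abbé et al.~\cite{Abbe2020} adapted to the degree-normalized setting, using the generalized Davis--Kahan theorem (Theorem~\ref{th:Generalized-DK}) as the workhorse. Fix $m \in \V_i$. We apply Theorem~\ref{th:Generalized-DK} with $M = A_i^m$, $N = D_i^m$ (so that $N^{-1}M = P_i^m$ and $u = \pi_i^m$), and with the test vector $\hat{u} = \pi_i$ and test value $\hat\lambda = \lambda_1(P_i)$. This gives
\[
\sin\angle(\pi_i,\pi_i^m) \;\leq\; \frac{\kappa(D_i^m)\,\norm{(P_i^m - \lambda_1(P_i)I)\pi_i}_2}{|\lambda_1(P_i) - \lambda_2(P_i^m)|}.
\]
The denominator is bounded below by a constant: by Weyl's inequality, Lemma~\ref{lem:normalized-submatrix-norm} ($\norm{P_i^m-\bar P_i}_2 = o(1)$), Lemma~\ref{lem:spectral-gap} (eigengap of $\bar P_i$ at least $\delta^2/(2+\delta)$), and Lemma~\ref{lem:min-degree-perturbation} ($\lambda_1(P_i) = \lambda_1 + O(1/\sqrt{\log n})$), we get $|\lambda_1(P_i)-\lambda_2(P_i^m)| \geq \delta^2/(2+\delta) - o(1)$ with probability $1-o(n^{-1})$. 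The condition number $\kappa(D_i^m) = d_{\max}/d_{\min}$ is $O(1)$ with probability $1-o(n^{-1})$ by Lemma~\ref{lem:min-degree} (valid since $a+b>4$). So it remains to control the numerator $\norm{(P_i^m - \lambda_1(P_i)I)\pi_i}_2$.

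For the numerator, write $(P_i^m - \lambda_1(P_i)I)\pi_i = (P_i^m - P_i)\pi_i + (P_i - \lambda_1(P_i)I)\pi_i$. The second term vanishes since $\pi_i$ is the eigenvector of $P_i$ with eigenvalue $\lambda_1(P_i)$. For the first term, $P_i^m - P_i$ differs from zero only in row $m$ and column $m$, so
\[
\norm{(P_i^m - P_i)\pi_i}_2 \;\leq\; \norm{(P_i^m-P_i)\,\pi_i^{(-m)}}_2 + \norm{(P_i^m-P_i)\,\pi_i(m)e_m}_2,
\]
where $\pi_i^{(-m)}$ zeroes out the $m$-th coordinate. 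The column-$m$ contribution is bounded by $\norm{P_i^m-P_i}_2\,|\pi_i(m)| \leq \norm{P_i^m-P_i}_2\,\norm{\pi_i}_\infty$. The row-$m$ contribution is the single entry $\big|\sum_{v\neq m}(P_i^m-P_i)(m,v)\pi_i(v)\big| = \big|(\bar P_i - P_i)(m,\cdot)\cdot \pi_i^{(-m)}\big|$, and here the key point — exactly as in \cite{Abbe2020} — is that $\pi_i^{(-m)}$ is \emph{independent} of row $m$ of $A$ (and hence of row $m$ of $P_i^m - P_i$), since $P_i^m$ does not depend on the $m$-th row/column of $A$. So this is a sum of independent centered terms dotted against a fixed unit vector, controlled by a Bernstein/Chernoff bound of order $O(\sqrt{\log n / (n\log n)}) \cdot \norm{\pi_i}_\infty = o(1)\cdot\norm{\pi_i}_\infty$ after a union bound over the $O(n)$ choices of $m$. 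Combining, $\norm{(P_i^m-P_i)\pi_i}_2 \leq o(1)\cdot\norm{\pi_i}_\infty$ uniformly in $m$.

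Finally, $\sin\angle(\pi_i,\pi_i^m) = o(1)\cdot\norm{\pi_i}_\infty$, and since both eigenvectors are unit $\ell_2$-normalized, $\norm{\pi_i - \pi_i^m}_2 \leq \sqrt{2}\,\sin\angle(\pi_i,\pi_i^m)$ after fixing the sign of $\pi_i^m$ to match $\pi_i$ (legitimate because the inner product is $1-o(1)$, hence positive). This yields $\max_{m}\norm{\pi_i-\pi_i^m}_2 \leq C\norm{\pi_i}_\infty$ with probability $1-o(n^{-1})$. The main obstacle is the probabilistic estimate on the row-$m$ numerator term: one must carefully verify the independence of $\pi_i^{(-m)}$ from the $m$-th row of $A$ and then produce a concentration bound sharp enough — of order $o(1/\sqrt{\text{something}})$ relative to $\norm{\pi_i}_\infty$ — to survive the union bound over all $n$ leave--one--out indices, all while the degree-normalization means the entries of $P_i$ are themselves random (degrees in the denominator), which requires the control on $d_{\min}, d_{\max}$ from Lemma~\ref{lem:min-degree} to be threaded through every step.
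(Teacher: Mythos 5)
Your overall architecture matches the paper's: law of cosines to reduce to $\sin\angle(\pi_i,\pi_i^m)$, the generalized Davis--Kahan theorem with $M=A_i^m$, $N=D_i^m$, $\hat u=\pi_i$, $\hat\lambda=\lambda_1(P_i)$, the denominator bounded below by a constant via Weyl and Lemma~\ref{lem:normalized-submatrix-norm}, and $\kappa(D_i^m)=O(1)$ via Lemma~\ref{lem:min-degree}. The problem is your treatment of the row-$m$ contribution to the numerator. You claim that $\pi_i^{(-m)}$ is independent of row $m$ of $A$ ``since $P_i^m$ does not depend on the $m$-th row/column of $A$.'' But the vector you are dotting against row $m$ is (a truncation of) $\pi_i$, the principal eigenvector of the \emph{full} matrix $P_i$, which certainly depends on row and column $m$ of $A$. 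The object that is independent of row $m$ is the leave--one--out eigenvector $\pi_i^m$ --- that is the entire point of constructing it --- and that independence is what gets exploited later, in the row-concentration step of Lemma~\ref{lem:norm-eigenvector-prob}, not here. As written, your Bernstein/Chernoff bound on $(\bar P_i - P_i)(m,\cdot)\cdot\pi_i^{(-m)}$ has no justification, so the $o(1)\cdot\norm{\pi_i}_\infty$ estimate for the numerator does not follow.

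Fortunately, no concentration is needed at this step, because the lemma only asks for $C\norm{\pi_i}_\infty$, not $o(1)\norm{\pi_i}_\infty$. The $m$-th entry of $(P_i^m-P_i)\pi_i$ is bounded deterministically by substochasticity of the rows:
\[
\Bigl|\sum_{v\in\V_i}\bigl(\bar P_i(m,v)-P_i(m,v)\bigr)\pi_i(v)\Bigr|
\leq \Bigl(\sum_{v}\bar P_i(m,v)+\sum_{v}P_i(m,v)\Bigr)\norm{\pi_i}_\infty \leq 2\norm{\pi_i}_\infty,
\]
and the remaining entries (the column-$m$ contribution) are handled as you do, via $\norm{\bar P_i - P_i}_{2,\infty}\leq\norm{\bar P_i-P_i}_2 = O(1/\sqrt{\log n})$ from Lemma~\ref{lem:normalized-submatrix-norm}. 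This gives $\norm{(P_i^m-P_i)\pi_i}_2\leq c\norm{\pi_i}_\infty$ uniformly in $m$ on the high-probability event, which is exactly the paper's argument and suffices to conclude. Replace your independence-plus-concentration step with this deterministic bound and the proof goes through.
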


\begin{proof} 
Let $\theta$ be the angle between $\pi_i$ and $\pi_i^m$. By the law of cosines and the normalization $\norm{\pi_i}_2 = \norm{\pi_i^m}_2 = 1$, we get
\[
\norm{\pi_i - \pi_i^m}_2 = \sqrt{2-2\cos(\theta)} \leq \sqrt{2-2\cos^2(\theta)} = \sqrt{2} \sin(\theta).
\]
We use Theorem \ref{th:Generalized-DK} to bound $\sin(\theta)$. Setting $M = A_i^m, N = D_i^m, \hat{\lambda} = \lambda_1(P_i)$, and $\hat{u} = \pi_i$ we get
\[
\sin \theta  \leq \frac{\kappa(D_i^m)\norm{(P_i^m- P_i)\pi_i}_2 }{|\lambda_1(P_i)-\lambda_2(P_i^m)|}.
\]
We bound the denominator and numerator separately. First, by Lemma \ref{lem:min-degree}, $\kappa(D_i^m) \leq \frac{d_{\max}}{d_{\min}} \leq c_1$ for some $c_1 > 0$ for all $m \in \U$ with probability $1-o(n^{-1})$. In addition, by Weyl's inequality and Lemma \ref{lem:normalized-submatrix-norm} with probability $1-o(n^{-1})$ for some $c_2 > 0$
\[ 
\lambda_1(P_i)-\lambda_2(P_i^m) > \lambda_1 + \lambda_2-\norm{\bar{P}_i - P_i}_2 - \max_{m \in \V_i} \norm{\bar{P}_i - P_i^m}_2 \geq c_2 .
\]
It is sufficient to show that the numerator $(P_i^m- P_i)\pi_i = O\left(\norm{\pi_i}_\infty\right)$. To do so, we bound the $m$th entry of $(P_i^m- P_i)\pi_i$ and the other entries separately. For all entries $u \neq m$, we have
\[
\left|(P_i^m- P_i)\pi_i(u)\right| = \left|\left(\bar{P}_i(m,u)-P_i(m,u)\right)\pi_i(u)\right| \leq \left|\bar{P}_i(m,u)-P_i(m,u)\right|\norm{\pi_i}_\infty
\]
Otherwise, when $u = m$
\begin{align*}
\left|(P_i^m- P_i)\pi_i(m)\right| &= \left|\sum_{v \in \V_i} \left(\bar{P}_i(v,m)-P_i(v,m)\right)\pi_i(v)\right| \\
&\leq  \sum_{v \in \V_i} \left|\left(\bar{P}_i(v,m)-P_i(v,m)\right)\right|\cdot \norm{\pi_i}_\infty \\
&\leq  2\norm{\pi_i}_\infty
\end{align*}
Then we apply Lemma \ref{lem:normalized-submatrix-norm} to see that with probability $1-o(n^{-1})$ for some $c_3 > 0$
\begin{align*}
\norm{(P_i^m- P_i)\pi_i}_2^2 &\leq (2\norm{\pi_i}_\infty)^2 + \sum_{u \in \V_i, u \neq m}\left(\bar{P}_i(m,u)-P_i(m,u)\right)^2\norm{\pi_i}_\infty^2  \\
%&\leq  2\norm{\pi_i}_\infty + \norm{\pi_i}_\infty\left(\sum_{u \in \V_i, u \neq m}\left(\bar{P}_i(m,u)-P_i(m,u)\right)^2\right)^{\frac{1}{2}} \\
&\leq 4\norm{\pi_i}_\infty^2 + \norm{\bar{P}_i-P_i}_{2,\infty}^2 \norm{\pi_i}_\infty^2 \\
&\leq 4\norm{\pi_i}_\infty^2 + \norm{\bar{P}_i-P_i}_{2}^2 \norm{\pi_i}_\infty^2 \\
&\leq c_3\norm{\pi_i}_\infty^2
\end{align*}
We conclude that for some constant $C > 0$, with probability $1-o(n^{-1})$ 
\[
\max_{m \in \V_i} \norm{\pi_i - \pi_i^m}_2 \leq C\norm{\pi_i}_\infty. \qedhere
\] 
\end{proof}

\begin{lemma}\label{lem:mean-variance} 
There exists a $C > 0$ such that with probability $1-o(n^{-1})$
\[
\norm{\pi_i - \bar{\pi}_i}_2 \leq \frac{C}{\sqrt{\log n}}.
\]
\end{lemma}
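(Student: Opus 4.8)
The plan is to use the generalized Davis--Kahan theorem (Theorem \ref{th:Generalized-DK}) directly, this time comparing $P_i$ with its mean-field version $\bar P_i$, rather than with a leave-one-out matrix. Set $M = A_i$, $N = D_i$, so that $N^{-1}M = P_i$ and its principal eigenvector is $\pi_i$. We want to take $\hat u = \bar\pi_i$, the mean-field eigenvector from Definition \ref{def:mf-eig}, and $\hat\lambda = \lambda_1 = \lambda_1(\bar P_i)$, the mean-field principal eigenvalue. Then, writing $\theta$ for the angle between $\pi_i$ and $\bar\pi_i$, exactly as in the proof of Lemma \ref{lem:leave--one--out} the law of cosines and the normalization $\norm{\pi_i}_2 = \norm{\bar\pi_i}_2 = 1$ give $\norm{\pi_i - \bar\pi_i}_2 \leq \sqrt{2}\,\sin\theta$, so it suffices to bound $\sin\theta$ by $C/\sqrt{\log n}$.

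Applying Theorem \ref{th:Generalized-DK} with the substitutions above yields
\[
\sin\theta \leq \frac{\kappa(D_i)\,\norm{(P_i - \lambda_1 I)\bar\pi_i}_2}{|\lambda_1 - \lambda_2(P_i)|}.
\]
For the numerator, since $\bar P_i \bar\pi_i = \lambda_1 \bar\pi_i$ we have $(P_i - \lambda_1 I)\bar\pi_i = (P_i - \bar P_i)\bar\pi_i$, whose norm is at most $\norm{P_i - \bar P_i}_2 \norm{\bar\pi_i}_2 = \norm{P_i - \bar P_i}_2 \leq c_2/\sqrt{\log n}$ by Lemma \ref{lem:normalized-submatrix-norm}, with probability $1 - o(n^{-1})$. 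For the condition number $\kappa(D_i) = d_{\max}/d_{\min}$, Lemma \ref{lem:min-degree} gives $c_1 \log n \leq d_{\min}$ and $d_{\max} \leq c_2' \log n$ with probability $1 - o(n^{-1})$ (using $a + b > 4$ so the failure probability is $o(n^{-1})$), hence $\kappa(D_i) \leq C'$ for a constant. For the denominator, by Weyl's inequality $|\lambda_1 - \lambda_2(P_i)| \geq (\lambda_1 - \lambda_2(\bar P_i)) - \norm{P_i - \bar P_i}_2 \geq \delta^2/(2+\delta) - c_2/\sqrt{\log n}$, which is bounded below by a positive constant for $n$ large by the spectral-gap bound of Lemma \ref{lem:spectral-gap}. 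Combining the three estimates gives $\sin\theta \leq C/\sqrt{\log n}$ on the intersection of these high-probability events, which still has probability $1 - o(n^{-1})$, and therefore $\norm{\pi_i - \bar\pi_i}_2 \leq C/\sqrt{\log n}$.

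One subtlety to handle carefully: Theorem \ref{th:Generalized-DK} as stated bounds the angle to the principal eigenvector of $N^{-1}M$, and one must check its hypotheses apply here ($D_i$ is positive definite on $\V_i$ since $d_{\min} > 0$ with high probability; $A_i$ and $D_i$ are symmetric). A second point is the sign ambiguity of eigenvectors: $\pi_i$ should be taken with the sign making $\langle \pi_i, \bar\pi_i\rangle \geq 0$ so that $\theta \in [0,\pi/2]$ and $\cos\theta = \langle \pi_i, \bar\pi_i\rangle$; this is harmless since the right eigenvector of the nonnegative irreducible-type matrix $P_i$ can be chosen nonnegative (Perron--Frobenius), matching the nonnegative $\bar\pi_i$. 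I expect the main obstacle to be purely bookkeeping — verifying that all the invoked high-probability events (Lemmas \ref{lem:min-degree}, \ref{lem:normalized-submatrix-norm}, \ref{lem:spectral-gap}) hold simultaneously with probability $1 - o(n^{-1})$ and that the denominator stays bounded away from zero — rather than anything deep; the real content is already packaged in the normalized matrix bound $\norm{P_i - \bar P_i}_2 = O(1/\sqrt{\log n})$.
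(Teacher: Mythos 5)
Your proposal is correct and follows essentially the same route as the paper's proof: the same application of the generalized Davis--Kahan theorem with $M = A_i$, $N = D_i$, $\hat u = \bar\pi_i$, $\hat\lambda = \lambda_1$, the same law-of-cosines reduction, and the same three bounds (numerator via Lemma \ref{lem:normalized-submatrix-norm}, condition number via Lemma \ref{lem:min-degree}, denominator via Weyl plus Lemma \ref{lem:spectral-gap}). Your explicit observation that $(P_i - \lambda_1 I)\bar\pi_i = (P_i - \bar P_i)\bar\pi_i$ and your remarks on sign conventions are details the paper leaves implicit, but the argument is the same.
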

\begin{proof}
Let $\theta$ be the angle between $\pi_i$ and $\bar{\pi}_i$. By the law of cosines, the normalization $\norm{\pi_i}_2 = \norm{\bar{\pi}_i}_2 = 1$ and Theorem \ref{th:Generalized-DK} with $N = D_i$, $M = A_i$, $\hat{u} = \bar{P}_i,$ and $\hat{\lambda} = \lambda_1(\bar{P}_i)$,
\[
\norm{\pi_i - \bar{\pi}_i}_2 = \sqrt{2 - 2\cos(\theta)} \leq \sqrt{2 - 2\cos^2(\theta)} = \sqrt{2}\sin(\theta) \leq \frac{\sqrt{2}\kappa(D_i)\norm{(P_i - \bar{P}_i)\bar{\pi}_i}_2}{\lambda_1 - \lambda_2(P_i)}.
\]
We note that \[\norm{(P_i - \bar{P}_i)\bar{\pi}_i}_2 \leq \norm{P_i - \bar{P}_i}_2\norm{\bar{\pi}_i}_2 \leq \norm{P_i - \bar{P}_i}_2\]
and that 
$\kappa(D_i) \leq \frac{d_{\max}}{d_{\min}} \leq c_1$ for some $c_1 > 0$ with probability $1-o(n^{-1})$.

By Theorem \ref{lem:normalized-submatrix-norm}, with probability $1-o(n^{-1})$
\[
\norm{P_i - \bar{P}_i}_2 =  \frac{c_2}{\sqrt{\log n}}.
\]
For the denominator, applying Weyl's inequality and Lemma \ref{lem:normalized-submatrix-norm} tells us that for some $c_3 > 0$ with probability $1-o(n^{-1})$ 
\[
\lambda_1 - \lambda_2(P_i)  \geq \lambda_1 - \lambda_2 - \norm{P_i - \bar{P}_i}_2 \geq c_3. \qedhere
\]
\end{proof}

We put together Lemmas \ref{lem:leave--one--out} and \ref{lem:mean-variance} to obtain an entrywise bound on the product $A_i(\pi_i -\bar{\pi}_i)$ below.

\begin{lemma}\label{lem:norm-eigenvector-prob}
There exists a $C > 0$ such that with probability $1-o(n^{-1})$
\begin{align*}
&\norm{A_i(\pi_i -\bar{\pi}_i)}_\infty \leq C \log n\left(\frac{\norm{\pi_i}_\infty}{\log \log n} + \frac{\log n }{\sqrt{n}\log \log n}\right). 
\end{align*}
\end{lemma}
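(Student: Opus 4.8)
The plan is to control the $u$th coordinate of $A_i(\pi_i-\bar\pi_i)$ for each fixed $u\in\V_i$ using the leave--one--out device, then take a union bound over $u$. Recall the leave--one--out matrix $P_i^u$ and its unit right eigenvector $\pi_i^u$: since $P_i^u$ agrees with $\bar P_i$ on row and column $u$, the vector $\pi_i^u$ depends only on entries of $A$ not incident to $u$, so conditionally on $\pi_i^u$ the row $A_i(u,\cdot)$ is still a vector of independent Bernoulli variables. I would decompose
\[
(A_i(\pi_i-\bar\pi_i))(u)=\underbrace{(A_i(\pi_i-\pi_i^u))(u)}_{\mathrm{(I)}}+\underbrace{(\bar A_i(\pi_i^u-\bar\pi_i))(u)}_{\mathrm{(II)}}+\underbrace{((A_i-\bar A_i)(\pi_i^u-\bar\pi_i))(u)}_{\mathrm{(III)}}
\]
and bound the three pieces on the intersection of the $1-o(n^{-1})$ events of Lemmas \ref{lem:min-degree}, \ref{lem:leave--one--out} and \ref{lem:mean-variance}.

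The easy pieces are (I) and (II), both handled by Cauchy--Schwarz. For (I), $|\mathrm{(I)}|\le\norm{A_i(u,\cdot)}_2\,\norm{\pi_i-\pi_i^u}_2\le\sqrt{d(u)}\cdot C\norm{\pi_i}_\infty=O(\sqrt{\log n}\,\norm{\pi_i}_\infty)$, using $\sum_v A_i(u,v)^2\le d_{\max}=O(\log n)$ (Lemma \ref{lem:min-degree}) and Lemma \ref{lem:leave--one--out}. For (II), $|\mathrm{(II)}|\le\norm{\bar A_i(u,\cdot)}_2\,\norm{\pi_i^u-\bar\pi_i}_2\le O(\log n/\sqrt n)\big(C\norm{\pi_i}_\infty+C/\sqrt{\log n}\big)$, since $\norm{\bar A_i(u,\cdot)}_2^2\le n(\max_{u,v}\bar A(u,v))^2$ and $\norm{\pi_i^u-\bar\pi_i}_2\le\norm{\pi_i^u-\pi_i}_2+\norm{\pi_i-\bar\pi_i}_2$ is controlled by Lemmas \ref{lem:leave--one--out} and \ref{lem:mean-variance}. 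Both are comfortably below $C\log n\big(\norm{\pi_i}_\infty/\log\log n+\log n/(\sqrt n\log\log n)\big)$ for large $n$.

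Term (III) is the crux. Conditionally on $\pi_i^u$ it is a sum of independent mean--zero variables $(A_i(u,v)-\bar A_i(u,v))(\pi_i^u-\bar\pi_i)_v$, each bounded in absolute value by $b:=\norm{\pi_i^u-\bar\pi_i}_\infty\le\norm{\pi_i^u-\pi_i}_\infty+\norm{\pi_i}_\infty+\norm{\bar\pi_i}_\infty\le(1+C)\norm{\pi_i}_\infty+\rho/(\gamma\sqrt n)$, where $\norm{\pi_i^u-\pi_i}_\infty\le\norm{\pi_i^u-\pi_i}_2\le C\norm{\pi_i}_\infty$ comes from Lemma \ref{lem:leave--one--out}. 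It is essential to bound $b$ through this $\ell^2$ leave--one--out estimate rather than through $\norm{\pi_i-\bar\pi_i}_2\le C/\sqrt{\log n}$ (Lemma \ref{lem:mean-variance}), which would give only $b=O(1/\sqrt{\log n})$ and wipe out the $1/\log\log n$ gain; the variance proxy $v:=\sum_v\bar A_i(u,v)(\pi_i^u-\bar\pi_i)_v^2\le(\max_{u,v}\bar A(u,v))\,\norm{\pi_i^u-\bar\pi_i}_2^2=O\big(\tfrac{\log n}{n}(\norm{\pi_i}_\infty^2+1/\log n)\big)$, by contrast, does use Lemma \ref{lem:mean-variance}. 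Now apply Bennett's inequality with radius $b$ and variance $v$: splitting on whether $b^2\log n/v$ exceeds a fixed large constant, in the first case $\log(b^2\log n/v)$ is of order at least $\log\log n$ and Bennett's rate function $h(x)=(1+x)\log(1+x)-x$ in the regime where $x=bt/v$ is large yields $|\mathrm{(III)}|\le Cb\log n/\log\log n$ with probability $1-n^{-3}$, while in the second case $b=O(1/\sqrt n)$ and the sub-Gaussian regime of Bernstein gives $|\mathrm{(III)}|\le C\sqrt{v\log n}=O(\log n/\sqrt n)$ with the same probability; either way $|\mathrm{(III)}|\le C\log n\big(\norm{\pi_i}_\infty/\log\log n+\log n/(\sqrt n\log\log n)\big)$. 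Extracting the $\log\log n$ from the logarithm in $h$ is the part I expect to be most delicate.

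Finally, a union bound over $u\in\V_i$ — and, after a routine discretization of $\norm{\pi_i}_\infty$ into its $O(\log n)$ dyadic scales so that the random right--hand side is meaningful — keeps the failure probability $o(n^{-1})$, and summing (I)--(III) gives the stated bound. One technical point to watch is that because the degree $d(v)$ entering $P_i(v,\cdot)$ involves $A(v,u)$, the matrix $P_i^u$ is not literally independent of column $u$ of $A$; this is resolved by defining the leave--one--out with mean--field degrees in row and column $u$, or by absorbing the resulting perturbation of order $\norm{\pi_i}_\infty/\log n$, which is of lower order.
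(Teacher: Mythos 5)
Your decomposition of $(A_i(\pi_i-\bar\pi_i))(u)$ into (I) $(A_i(\pi_i-\pi_i^u))(u)$, (II) $(\bar A_i(\pi_i^u-\bar\pi_i))(u)$ and (III) $((A_i-\bar A_i)(\pi_i^u-\bar\pi_i))(u)$ is exactly the paper's, and your treatment of (I) and (II) by Cauchy--Schwarz together with Lemmas \ref{lem:min-degree}, \ref{lem:leave--one--out} and \ref{lem:mean-variance} matches the paper's. The only real difference is term (III): the paper invokes the row-concentration property of Deng et al.~\cite{Deng2021} as a black box, with the bound $\norm{w}_\infty\,\phi(\norm{w}_2/(\sqrt{k}\norm{w}_\infty))\log n$ and $\phi(t)=(1\lor\log(1/t))^{-1}$, and then extracts the $1/\log\log n$ via the $x,y$ case analysis; you instead re-derive that inequality from scratch by conditioning on $\pi_i^u$ and applying Bennett. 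Your re-derivation is sound in structure, but the case split should be on whether $b^2\log n/v$ exceeds a \emph{polylogarithmic} threshold (say $(\log n)^{1/2}$), not ``a fixed large constant'': only then is $\log(b^2\log n/v)\gtrsim\log\log n$, which is what makes Bennett's rate $\tfrac{v}{b^2}h(bt/v)\approx\tfrac{t}{b}\log(bt/v)$ reach $3\log n$ at $t=Cb\log n/\log\log n$; in the complementary regime Bernstein's $\sqrt{v\log n}+b\log n$ term must then be checked against the target, which works precisely because $b$ is at most polylogarithmically larger than $\sqrt{v/\log n}$. You correctly flag this as the delicate step. Your final remark --- that $\pi_i^u$ is not literally independent of row/column $u$ of $A$ because the degrees $d(v)$, $v\neq u$, involve $A(v,u)$ --- is a legitimate gap in the paper's own leave-one-out construction (inherited from the adjacency-matrix setting of \cite{Abbe2020}, where the replacement by $\bar A$ does make the row independent), and your proposed fix (mean-field degrees in the leave-one-out, absorbing an $O(\norm{\pi_i}_\infty/\log n)$ perturbation) is the standard one.
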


\begin{proof} The proof closely follows the argument in Lemma 18 in \cite{Zhang2016} but we write it out here for completeness. We first bound $\norm{A_i(\pi_i -\bar{\pi}_i)}_\infty $.
We note that
\begin{align}
\norm{A_i(\pi_i -\bar{\pi}_i)}_\infty 
&= \max_{m \in \V_i} |\langle A_{i}(m,\,\cdot\,), \pi_i -\bar{\pi}_i\rangle| \notag \\
&\leq \max_{m \in \V_i} |\langle A_{i}(m,\,\cdot\,), \pi_i -\pi_i^m\rangle| + |\langle A_{i}(m,\,\cdot\,), \pi_i^m -\bar{\pi}_i\rangle| \notag \\
&\leq \max_{m \in \V_i} \norm{A_{i}}_{2,\infty}\norm{\pi_i -\pi_i^m}_2 \label{eq:term1}\\
&\quad + \max_{m \in \V_i} |\langle \bar{A}_{i}(m,\,\cdot\,), \pi_i^m -\bar{\pi}_i\rangle| \label{eq:term2}\\
&\quad + \max_{m \in \V_i} |\langle \bar{A}_{i}(m,\,\cdot\,)- A_{i}(m,\,\cdot\,), \pi_i^m -\bar{\pi}_i\rangle| \label{eq:term3}
\end{align}

For the first part of the sum \eqref{eq:term1}, by Theorem \ref{th:lei-rinaldo}, for some $c_1 > 0$
\[
\norm{A_{i}}_{2,\infty} \leq \norm{\bar{A}_i}_{2,\infty} + \norm{A_i - \bar{A}_i}_2 \leq c_1\sqrt{\log n}.
\] 
and by the leave--one--out construction in Lemma \ref{lem:leave--one--out}
\[
\max_{m \in \V_i} \norm{\pi_i -\pi_i^m}_2 \leq c_2\norm{\pi_i}_\infty
\]
both with probability $1-o(n^{-1})$.
Therefore,
\[
 \max_{m \in \V_i} \norm{A_{i}}_{2,\infty}\norm{\pi_i -\pi_i^m}_2  \leq c_1c_2 \norm{\pi_i}_\infty \sqrt{\log n}.
\]

For the second term \eqref{eq:term2}, for some $c_3 > 0$, with probability $1-o(n^{-1})$, we apply Lemma \ref{lem:mean-variance} to get
\begin{align*}
\max_{m \in \V_i} |\langle \bar{A}_{i}(m,\,\cdot\,), \pi_i^m -\bar{\pi}_i\rangle| &\leq \max_{m \in \V_i}  \norm{\bar{A}_i}_{2,\infty}\norm{\pi_i^m -\bar{\pi}_i}_2 \\
&\leq \norm{\bar{A}_i}_{2,\infty}\left(\max_{m \in \V_i} \norm{\pi_i - \pi_i^m}_2 + \norm{\pi_i - \bar{\pi}_i}_2\right) \\
&\leq \frac{c_3\log n}{\sqrt{n}}\left(\norm{\pi_i}_\infty  + \frac{1}{\sqrt{\log n}}\right) 
\end{align*}

Let $k = |\V_i| = \left(1-\frac{\delta}{2}\right)n$. The third term \eqref{eq:term3} uses the row-concentration property from~\cite{Deng2021}, that with probability at least $1-o(n^{-1})$
\[
\max_{m \in \V_i} |\langle(A_i(m,\,\cdot\,) - \bar{A}_i(m,\,\cdot\,)), (\pi_i^m - \bar{\pi}_i)\rangle| \leq c_4\left(\max_{m \in \V_i} \norm{w}_\infty \phi\left(\frac{\norm{w}_2}{\sqrt{k}\norm{w}_\infty}\right)\log n \right)
\]
where $w = \pi_i^m - \bar{\pi}_i$ and $\phi(t) = (1 \lor \log(1/t))^{-1}$ for $t > 0$.

Let $x=\sqrt{k}\norm{w}_\infty$, $y=\norm{w}_2$, then we have by Lemma \ref{lem:leave--one--out} with probability $1-o(n^{-1})$ that
\begin{align*}
\max_{m \in \V_i} x &= \sqrt{k}\max_{m \in \V_i} \norm{\pi_i^m - \bar{\pi}_i}_\infty \\
&\leq \sqrt{k}\left(\max_{m \in \V_i} \left(\norm{\pi_i^m - \pi_i}_2\right) + \norm{\pi_i}_\infty + \norm{\bar{\pi}_i}_\infty\right) \\
&\leq c_5\sqrt{n}\norm{\pi_i}_\infty.
\end{align*}
In addition, by Lemmas \ref{lem:leave--one--out} and \ref{lem:mean-variance}, with probability $1-o(n^{-1})$
\[
\max_{m \in \V_i} y = \max_{m \in \V_i} \norm{\pi_i^m - \bar{\pi}_i}_2 \leq \max_{m \in \V_i} \norm{\pi_i^m - \pi_i}_2 + \norm{\pi_i - \bar{\pi}_i}_2 \leq c_6\left(\norm{\pi_i}_\infty + \frac{1}{\sqrt{\log n}}\right).
\]

We see that for all possible values of $x,y$ we have
\begin{align*}
\norm{w}_\infty \phi\left(\frac{\norm{w}_2}{\sqrt{k}\norm{w}_\infty}\right)\log n  &\leq \frac{\log n}{\sqrt{n}}x\phi\left(\frac{y}{x}\right)\\
&= \frac{\log n}{\sqrt{n}}\left(\Ind{\frac{x}{y} \geq \sqrt{\log n}}x\phi\left(\frac{y}{x}\right) + \Ind{\frac{x}{y} < \sqrt{\log n}}y\frac{x}{y}\phi\left(\frac{y}{x}\right)\right)\\
&\leq \frac{\log n}{\sqrt{n}}\left(x\phi\left(\frac{1}{\sqrt{\log n}}\right) + y\sqrt{\log n}\phi\left(\frac{1}{\sqrt{\log n}}\right)\right).  
\end{align*}
as $\phi$ is monotone increasing while $\phi(t)/t$ is monotone decreasing.

As a result, for some constants $c_7, C > 0$, with probability $1-o(n^{-1})$ we have
\begin{align*}
&\max_{m \in \V_i} |\langle(A_i-\bar{A}_i)(m,\cdot), \pi_i^m - \bar{\pi}_i\rangle| \\
&\qquad \qquad \leq c_4\max_{m \in \V_i} \frac{\log n}{\sqrt{n}}\left(x\phi\left(\frac{1}{\sqrt{\log n}}\right) + y\sqrt{\log n}\phi\left(\frac{1}{\sqrt{\log n}}\right)\right)\\
&\qquad \qquad \leq \frac{c_4\log n \phi\left(\frac{1}{\sqrt{\log n}}\right)}{\sqrt{n}}\left(c_5\sqrt{n}\norm{\pi_i}_\infty  + c_6\sqrt{\log n}\norm{\pi_i}_\infty + c_6\right)\\
&\qquad \qquad \leq\frac{c_7\log n}{\sqrt{n}\log\log n}\left(\sqrt{n}\norm{\pi_i}_\infty + 1 \right)\\
&\qquad \qquad \leq C\left( \frac{\norm{\pi_i}_\infty\log n }{\log \log n} + \frac{\log n }{\sqrt{n}\log \log n}\right).
\end{align*}
as desired. We conclude that $\norm{A_i(\pi_i -\bar{\pi}_i)}_\infty \leq C \log n\left(\dfrac{\norm{\pi_i}_\infty}{\log \log n} + \dfrac{\log n }{\sqrt{n}\log \log n}\right)$.
\end{proof}

We use the previous lemma to prove an explicit entrywise bound for $P_i(\pi_i -\bar{\pi}_i)$.

\begin{lemma}\label{lem:norm-eigenvector} 
There exists a $C > 0$ such that with probability $1-o(n^{-1})$
\[
\norm{P_i(\pi_i -\bar{\pi}_i)}_\infty \leq \frac{C}{\sqrt{n} \log \log n}.
\]
\end{lemma}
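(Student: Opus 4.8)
The plan is to reduce the statement for $P_i$ to the adjacency-matrix estimate already proved in Lemma~\ref{lem:norm-eigenvector-prob}, and then to absorb the factor $\norm{\pi_i}_\infty$ appearing on its right-hand side by a short self-bounding (bootstrap) argument establishing $\norm{\pi_i}_\infty = O(n^{-1/2})$. For the reduction, since $P_i = D_i^{-1}A_i$ we have $P_i(\pi_i-\bar\pi_i)(m) = d(m)^{-1}A_i(\pi_i-\bar\pi_i)(m)$ for every $m\in\V_i$, hence $\norm{P_i(\pi_i-\bar\pi_i)}_\infty \le d_{\min}^{-1}\norm{A_i(\pi_i-\bar\pi_i)}_\infty$. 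By Lemma~\ref{lem:min-degree} we have $d_{\min}\ge c_1\log n$ with probability $1-o(n^{-1})$, and on that event Lemma~\ref{lem:norm-eigenvector-prob} gives
\[
\norm{P_i(\pi_i-\bar\pi_i)}_\infty \;\le\; \frac{C}{\log\log n}\Bigl(\norm{\pi_i}_\infty + \frac{1}{\sqrt n}\Bigr),
\]
so it remains to prove $\norm{\pi_i}_\infty = O(n^{-1/2})$ with probability $1-o(n^{-1})$.

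For the self-bounding step, write $\norm{\pi_i}_\infty \le \norm{\pi_i-\bar\pi_i}_\infty + \norm{\bar\pi_i}_\infty$, where $\norm{\bar\pi_i}_\infty = \rho/(\gamma\sqrt n) = O(n^{-1/2})$ by Definition~\ref{def:mf-eig}. From the eigen-identity $\lambda_1(P_i)\pi_i = P_i\pi_i$ together with $\bar P_i\bar\pi_i=\lambda_1\bar\pi_i$ one gets
\[
\pi_i-\bar\pi_i \;=\; \frac{1}{\lambda_1(P_i)}\Bigl(P_i(\pi_i-\bar\pi_i) + (P_i-\bar P_i)\bar\pi_i + (\lambda_1-\lambda_1(P_i))\bar\pi_i\Bigr).
\]
By Lemmas~\ref{lem:eig-lbound} and~\ref{lem:min-degree-perturbation}, $\lambda_1(P_i)\ge c>0$ and $|\lambda_1-\lambda_1(P_i)| = O((\log n)^{-1/2})$ with probability $1-o(n^{-1})$, so the last summand is $O((n\log n)^{-1/2})$. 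For the middle summand I claim $\norm{(P_i-\bar P_i)\bar\pi_i}_\infty = O(n^{-1/2})$: using $\bar A_i\bar\pi_i = \lambda_1\bar D_i\bar\pi_i$ one has
\[
(P_i-\bar P_i)\bar\pi_i(m) \;=\; \frac{1}{d(m)}\Bigl[\bigl(A_i\bar\pi_i-\bar A_i\bar\pi_i\bigr)(m) - \lambda_1\bar\pi_i(m)\bigl(d(m)-\bar d(m)\bigr)\Bigr],
\]
and a Bernstein bound for the sum of independent terms $A(m,v)\bar\pi_i(v)$ (each of order $n^{-1/2}$, total variance $O(\log n/n)$) bounds the first bracket by $O(\log n/\sqrt n)$ uniformly in $m$, the degree-concentration estimate from the proof of Lemma~\ref{lem:min-degree} gives $|d(m)-\bar d(m)| = O(\log n)$ uniformly, and dividing by $d(m)\ge c_1\log n$ leaves $O(n^{-1/2})$, all with probability $1-o(n^{-1})$. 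Combining the three summands,
\[
\norm{\pi_i-\bar\pi_i}_\infty \;\le\; C'\Bigl(\norm{P_i(\pi_i-\bar\pi_i)}_\infty + \frac{1}{\sqrt n}\Bigr).
\]

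To close the loop, substitute the reduction bound and then $\norm{\pi_i}_\infty\le\norm{\pi_i-\bar\pi_i}_\infty + O(n^{-1/2})$ into it to obtain
\[
\norm{\pi_i-\bar\pi_i}_\infty \;\le\; \frac{C''}{\log\log n}\,\norm{\pi_i-\bar\pi_i}_\infty + \frac{C''}{\sqrt n}.
\]
For $n$ large enough $C''/\log\log n \le 1/2$, so $\norm{\pi_i-\bar\pi_i}_\infty = O(n^{-1/2})$ and hence $\norm{\pi_i}_\infty = O(n^{-1/2})$; feeding this back into the reduction bound yields $\norm{P_i(\pi_i-\bar\pi_i)}_\infty \le C/(\sqrt n\log\log n)$. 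Since only $O(1)$ high-probability events were invoked, a union bound keeps the total failure probability $o(n^{-1})$.

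The only delicate point is making the recursion contract: after substitution the coefficient of $\norm{\pi_i-\bar\pi_i}_\infty$ must be strictly below $1$ for large $n$, which is exactly what the $(\log\log n)^{-1}$ gain in Lemma~\ref{lem:norm-eigenvector-prob} — inherited from the row-concentration $\phi$-argument of~\cite{Deng2021} — provides; a bound without that gain would not close the loop. The remaining ingredient, the entrywise Bernstein control of $(P_i-\bar P_i)\bar\pi_i$, is routine precisely because $\bar\pi_i$ is deterministic.
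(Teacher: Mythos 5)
Your proof is correct and follows essentially the same route as the paper's: reduce to Lemma~\ref{lem:norm-eigenvector-prob} by dividing by $d_{\min}=\Theta(\log n)$, then close a self-bounding inequality for $\norm{\pi_i}_\infty$ using the $(\log\log n)^{-1}$ contraction. The only real difference is bookkeeping: the paper disposes of the zeroth-order term in one line via $\norm{P_i\bar{\pi}_i}_\infty\le\norm{P_i}_\infty\norm{\bar{\pi}_i}_\infty=O(n^{-1/2})$ (substochasticity of $P_i$), which makes your Bernstein estimate for $(P_i-\bar{P}_i)\bar{\pi}_i$ and the eigenvalue-perturbation term unnecessary. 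One caveat: taken literally, Lemma~\ref{lem:norm-eigenvector-prob} has additive term $\log^2 n/(\sqrt{n}\log\log n)$, so dividing by $d_{\min}$ leaves $\log n/(\sqrt{n}\log\log n)$ rather than the $1/\sqrt{n}$ you quote; however, the proof of that lemma actually establishes the sharper additive term $\log n/(\sqrt{n}\log\log n)$ (the extra $\log n$ in its statement appears to be a typo), and both your argument and the paper's implicitly rely on that sharper form.
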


\begin{proof} 
The result follows from Lemma \ref{lem:norm-eigenvector-prob} if we show that for some $c_1 > 0$, with probability $1-o(n^{-1})$
\[
\norm{\pi_i}_\infty  \leq \frac{c_1}{\sqrt{n}}.
\]

We begin by expanding
\[
\norm{\pi_i}_\infty = \norm{\frac{1}{\lambda_1(P_i)} P_i \pi_i}_\infty \leq \norm{\frac{1}{\lambda_1(P_i)} P_i \bar{\pi}_i}_\infty + \norm{\frac{1}{\lambda_1(P_i)} P_i(\pi_i-\bar{\pi}_i)}_\infty.
\]

We bound the second term with Lemma \ref{lem:norm-eigenvector-prob} by pulling out the degree matrix $D^{-1}$ and we  account for dividing by the eigenvalue $\lambda_1(P_i)$ by applying our bound in Lemma \ref{lem:min-degree-perturbation}. We then see that with probability $1-o(n^{-1})$, for some $c_2 > 0$
\[
\lambda_1(P_i) \geq c_2,
\]
giving upper bounds with high probability of
\begin{align*}
&\norm{\frac{1}{\lambda_1(P_i)} P_i \bar{\pi}_i}_\infty  \leq c_2 \norm{P_i}_\infty \norm{\bar{\pi}_1}_\infty \leq \frac{c_3}{\sqrt{n}}. \\
&\norm{\frac{1}{\lambda_1(P_i)} P_i(\pi_i-\bar{\pi}_i) }_\infty \leq \norm{D^{-1}}_{\infty}\norm{\frac{1}{\lambda_1(P_i)} A_i(\pi_i-\bar{\pi}_i) }_\infty  \leq \frac{c_4\norm{\pi_i}_\infty}{\log \log n} + \frac{c_4}{\sqrt{n}\log \log n}.
\end{align*}
We conclude that $\norm{\pi_i}_\infty  \leq c_1/\sqrt{n}$ for some $c_1 > 0$ with probability $1-o(n^{-1})$. \qedhere
\end{proof}

We conclude our entrywise analysis by giving an explicit order to the following expression.

\begin{lemma}\label{lem:perturbed-eigenvector}  
There exists a $C > 0$ such that with probability $1-o(n^{-1})$
\[
\norm{\pi - \frac{P_i\bar{\pi}_i}{\lambda_1}}_\infty \leq \frac{C}{\sqrt{n}\log\log n}.
\]
\end{lemma}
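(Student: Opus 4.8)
The plan is to exploit the eigenvector identity $\pi_i = \lambda_1(P_i)^{-1}P_i\pi_i$ and compare it entrywise with the target $\lambda_1^{-1}P_i\bar\pi_i$. Adding and subtracting $\lambda_1(P_i)^{-1}P_i\bar\pi_i$ yields the decomposition
\[
\pi_i - \frac{P_i\bar\pi_i}{\lambda_1} \;=\; \frac{1}{\lambda_1(P_i)}\,P_i(\pi_i-\bar\pi_i) \;+\; \Bigl(\frac{1}{\lambda_1(P_i)}-\frac{1}{\lambda_1}\Bigr)P_i\bar\pi_i ,
\]
so on the high-probability event where the earlier lemmas hold it suffices to bound each summand in $\ell^\infty$ by $O\bigl(\tfrac{1}{\sqrt n\log\log n}\bigr)$.

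For the first summand, Lemma \ref{lem:norm-eigenvector} already gives $\norm{P_i(\pi_i-\bar\pi_i)}_\infty \le C/(\sqrt n\log\log n)$, so I only need $\lambda_1(P_i)$ bounded below by a positive constant; this follows from Weyl's inequality together with Lemma \ref{lem:eig-lbound} ($\lambda_1\ge 2/(2+\delta)$) and Lemma \ref{lem:normalized-submatrix-norm} ($\norm{P_i-\bar P_i}_2 = o(1)$), exactly as in the proof of Lemma \ref{lem:norm-eigenvector}, and dividing by this constant only changes $C$. For the second summand, $\bigl|\tfrac{1}{\lambda_1(P_i)}-\tfrac{1}{\lambda_1}\bigr| = \tfrac{|\lambda_1-\lambda_1(P_i)|}{\lambda_1\lambda_1(P_i)}$, whose numerator is at most $\norm{P_i-\bar P_i}_2\le c/\sqrt{\log n}$ by Weyl and Lemma \ref{lem:normalized-submatrix-norm} and whose denominator is bounded below by a constant as above; since $P=D^{-1}A$ is row-substochastic we have $\norm{P_i}_\infty\le 1$, and $\norm{\bar\pi_i}_\infty = O(1/\sqrt n)$ by Definition \ref{def:mf-eig}, so $\norm{P_i\bar\pi_i}_\infty\le\norm{P_i}_\infty\norm{\bar\pi_i}_\infty = O(1/\sqrt n)$. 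Hence the second summand is $O\bigl(\tfrac{1}{\sqrt n\,\sqrt{\log n}}\bigr) = O\bigl(\tfrac{1}{\sqrt n\log\log n}\bigr)$ since $\log\log n = o(\sqrt{\log n})$.

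Combining the two bounds and taking a union bound over the finitely many high-probability events from Lemmas \ref{lem:min-degree}, \ref{lem:normalized-submatrix-norm}, \ref{lem:norm-eigenvector-prob}, and \ref{lem:norm-eigenvector} gives the claim with probability $1-o(n^{-1})$. The computation is essentially a corollary of the machinery already built; the only point needing a little care is the second summand, where the eigenvalue perturbation $|\lambda_1-\lambda_1(P_i)| = O(1/\sqrt{\log n})$ is the binding rate, and one must compare $1/\sqrt{\log n}$ with $1/\log\log n$ in the correct direction.
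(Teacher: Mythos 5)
Your proof is correct and follows essentially the same route as the paper's: both decompose $\pi_i - P_i\bar{\pi}_i/\lambda_1$ via the eigenvector identity into a $P_i(\pi_i-\bar{\pi}_i)$ term controlled by Lemma \ref{lem:norm-eigenvector} and an eigenvalue-perturbation term of order $1/(\sqrt{n}\sqrt{\log n})$ controlled by Lemma \ref{lem:min-degree-perturbation} (which you re-derive inline via Weyl and Lemma \ref{lem:normalized-submatrix-norm}). The only cosmetic difference is which cross term you add and subtract (the paper uses $P_i\pi_i/\lambda_1$ and bounds $\norm{P_i}_\infty\norm{\pi_i}_\infty$, you use $P_i\bar{\pi}_i/\lambda_1(P_i)$ and bound $\norm{P_i}_\infty\norm{\bar{\pi}_i}_\infty$), and both yield the same final estimate.
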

\begin{proof} 
The result follows from expanding $\pi_i$ as an eigenvector and applying Lemma \ref{lem:norm-eigenvector}. In addition, we also use the bound on $\norm{\pi_i}_\infty = O(n^{-1/2})$ in Lemma \ref{lem:norm-eigenvector}, as well as Lemma \ref{lem:min-degree-perturbation} to show that there exist $c_1,c_2 > 0$ such that with probability $1-o(n^{-1})$
\begin{align*}
\norm{\pi - \frac{P_i\bar{\pi}_i}{\lambda_1}}_\infty &\leq \frac{1}{\lambda_1}\norm{P_i(\pi_i-\bar{\pi}_i)}_\infty + \norm{P_i}_\infty \norm{\pi_i}_\infty \left(\frac{1}{\lambda_1(P_i)}- \frac{1}{\lambda_1}\right) \\
&\leq  \frac{c_1}{\sqrt{n}\log\log n} + \frac{c_2}{\sqrt{n\log n}}.
\end{align*}
The result follows.
\end{proof}

\subsection{Equivalence of left and right transition eigenvectors}

\begin{lemma} \label{lem:left-right-eigs} 
For connected graphs, the left eigenvector $\mu_i$ of the transition matrix $P_i$ satisfies $\mu_i \propto D_i\pi_i$.
\end{lemma}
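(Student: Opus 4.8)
The plan is to show directly that $D_i\pi_i$ solves the left-eigenvector equation for $P_i$ at the eigenvalue $\lambda_1(P_i)$, and then to invoke Perron--Frobenius to upgrade this to the proportionality $\mu_i\propto D_i\pi_i$ between the \emph{principal} left and right eigenvectors.

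First I would record the bookkeeping identity $A_i = D_iP_i$. This holds because $P=D^{-1}A$ gives $P_i(u,v)=A(u,v)/d(u)=A_i(u,v)/d(u)$ for $u,v\in\V_i$, and, as already noted in the text, the degree $d_i(u)$ of a node $u$ in the restricted walk coincides with its full degree $d(u)$; on a connected graph every degree is positive, so $D_i$ is an invertible diagonal matrix. Treating vectors as columns and using only $A_i^\top = A_i$ together with $P_i\pi_i=\lambda_1(P_i)\pi_i$, a one-line computation gives
\[
(D_i\pi_i)^\top P_i \;=\; \pi_i^\top D_iP_i \;=\; \pi_i^\top A_i \;=\; (A_i\pi_i)^\top \;=\; (D_iP_i\pi_i)^\top \;=\; \lambda_1(P_i)\,(D_i\pi_i)^\top .
\]
Hence $D_i\pi_i$ is a left eigenvector of $P_i$ associated with the eigenvalue $\lambda_1(P_i)$. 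Equivalently, one can observe that $P_i = D_i^{-1/2}\bigl(D_i^{-1/2}A_iD_i^{-1/2}\bigr)D_i^{1/2}$ is conjugate to the symmetric matrix $S_i:=D_i^{-1/2}A_iD_i^{-1/2}$, so if $v$ is a unit eigenvector of $S_i$ then $\pi_i\propto D_i^{-1/2}v$ and $\mu_i\propto D_i^{1/2}v$, which again yields $\mu_i\propto D_i\pi_i$.

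It then remains to rule out that $\lambda_1(P_i)$ has a higher-dimensional eigenspace, which is exactly where connectedness enters. Since $A_i$ is a nonnegative symmetric matrix, $P_i=D_i^{-1}A_i$ is nonnegative, and when the absorbed walk on $\V_i$ is irreducible (the content of the connectedness hypothesis) Perron--Frobenius implies that the spectral radius $\lambda_1(P_i)$ is a simple eigenvalue whose left and right eigenspaces are each one-dimensional and spanned by strictly positive vectors. As $\pi_i$ is the positive right Perron eigenvector and $D_i$ has positive diagonal, $D_i\pi_i$ is a positive left eigenvector, hence a positive scalar multiple of the principal left eigenvector $\mu_i$. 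The normalizations $\norm{\mu_i}_1=1$ and $\norm{\pi_i}_2=1$ only pin down the scalar and do not affect the stated proportionality.

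The main obstacle is not the algebra --- which is the displayed one-liner --- but the irreducibility step: removing $\R_i$ from a connected graph does not automatically leave a connected subgraph, so some care is needed to see that the connectedness assumption genuinely forces $P_i$ to be irreducible (equivalently, that the quasi-stationary distribution on $\V_i$ is unique); without this, ``the'' principal left eigenvector is ambiguous and the proportionality could be read off an inconsistent pair of eigenvectors.
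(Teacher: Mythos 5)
Your core computation is exactly the paper's proof: the paper verifies $P_i^T D_i\pi_i = A_iD_i^{-1}D_i\pi_i = D_iP_i\pi_i = \lambda_1(P_i)D_i\pi_i$ and stops there, so your displayed one-liner is the same argument written in row-vector form. The additional Perron--Frobenius discussion you give (simplicity of $\lambda_1(P_i)$, and the caveat that deleting $\R_i$ from a connected graph need not leave the induced subgraph on $\V_i$ connected) addresses a point the paper leaves entirely implicit --- a legitimate observation, but not something the paper's proof supplies either.
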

\begin{proof} It is sufficient to show that $D_i\pi_i$ is a left eigenvector of $P_i$ with the eigenvalue $\lambda_1(P_i)$. Using that $\pi_i$ is a principal right eigenvector of $P_i$ we have
\[
P_i^T D_i\pi_i 
= A_iD_i^{-1}D_i\pi_i 
= D_iD_i^{-1}A_i\pi_i 
= D_iP_i\pi_i 
= \lambda(P_i)D_i\pi_i \qedhere
\]
\end{proof}

We use Lemma \ref{lem:left-right-eigs} to bound the difference between using the left and right eigenvectors for classification.

\begin{lemma}\label{lem:left-eig-difference} 
Let $\bar{c} = \norm{\bar{A}_i\bar{\pi}_i}_1/\sqrt{n}\log n$. Then there exists a $C > 0$ such that with probability $1-o(n^{-1})$ we have
\[
\norm{(\mu_- -\mu_+) - \frac{\lambda_1}{\bar{c}\sqrt{n}\log n}(D_-\pi_--D_+\pi_+)}_\infty \leq \frac{C}{n\log\log n}
\]
where the differences are defined by restricting to unseen nodes $u \in \U$.    
\end{lemma}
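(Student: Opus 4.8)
The plan is to reduce the lemma to a scalar estimate. By Lemma~\ref{lem:left-right-eigs} the left eigenvector satisfies $\mu_i \propto D_i\pi_i$; since $\pi_i$ — and hence $D_i\pi_i$ — is entrywise nonnegative by Perron--Frobenius, while $\norm{\mu_i}_1 = 1$, this gives the exact identity $\mu_i = D_i\pi_i/\norm{D_i\pi_i}_1$. Writing
\[
\mu_i - \frac{\lambda_1}{\bar c\sqrt n\log n}\,D_i\pi_i = \left(\frac{1}{\norm{D_i\pi_i}_1} - \frac{\lambda_1}{\bar c\sqrt n\log n}\right)D_i\pi_i,
\]
it suffices to establish, with probability $1-o(n^{-1})$, a uniform bound $\norm{D_i\pi_i}_\infty = O(\log n/\sqrt n)$ on the vector and the scalar estimate $\norm{D_i\pi_i}_1 = \bar c\sqrt n\log n/\lambda_1 + O(\sqrt{n\log n})$ on its $\ell^1$ norm. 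The first is immediate from $\norm{D_i\pi_i}_\infty \le d_{\max}\norm{\pi_i}_\infty$ together with $d_{\max} = O(\log n)$ (Lemma~\ref{lem:min-degree}) and $\norm{\pi_i}_\infty = O(n^{-1/2})$ (established inside the proof of Lemma~\ref{lem:norm-eigenvector}).

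For the $\ell^1$ estimate I would use the eigenvector relation $A_i\pi_i = \lambda_1(P_i)D_i\pi_i$ to write $\norm{D_i\pi_i}_1 = \norm{A_i\pi_i}_1/\lambda_1(P_i)$ and compare $\norm{A_i\pi_i}_1$ with its mean-field value $\norm{\bar A_i\bar\pi_i}_1 = \bar c\sqrt n\log n$. The triangle inequality gives
\[
\left|\,\norm{A_i\pi_i}_1 - \norm{\bar A_i\bar\pi_i}_1\,\right| \le \norm{A_i(\pi_i - \bar\pi_i)}_1 + \norm{(A_i - \bar A_i)\bar\pi_i}_1 .
\]
I would bound the first summand by $\norm{A_i}_{1\to 1}\,\norm{\pi_i - \bar\pi_i}_1 \le d_{\max}\sqrt{n}\,\norm{\pi_i - \bar\pi_i}_2 = O(\sqrt{n\log n})$, using that $\norm{A_i}_{1\to 1}$ (the largest column sum) is at most $d_{\max}$ and Lemma~\ref{lem:mean-variance}, and the second by $\sqrt n\,\norm{A_i - \bar A_i}_2\,\norm{\bar\pi_i}_2 = O(\sqrt{n\log n})$ via Theorem~\ref{th:lei-rinaldo}. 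Since $\lambda_1 = \Theta(1)$ (Lemma~\ref{lem:eig-lbound}) and $|\lambda_1(P_i) - \lambda_1| \le \norm{P_i - \bar P_i}_2 = O(1/\sqrt{\log n})$ by Weyl's inequality and Lemma~\ref{lem:normalized-submatrix-norm}, dividing by $\lambda_1(P_i)$ preserves an $O(\sqrt{n\log n})$ error, so $\norm{D_i\pi_i}_1 = \bar c\sqrt n\log n/\lambda_1 + O(\sqrt{n\log n})$; in particular $\norm{D_i\pi_i}_1 = \Theta(\sqrt n\log n)$ because $\bar c$ is a positive constant.

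Combining the two estimates, $\left|\frac{1}{\norm{D_i\pi_i}_1} - \frac{\lambda_1}{\bar c\sqrt n\log n}\right|$ is the ratio of an $O(\sqrt{n\log n})$ numerator to a $\Theta(n\log^2 n)$ denominator, i.e.\ $O\!\left(n^{-1/2}(\log n)^{-3/2}\right)$; multiplying by $\norm{D_i\pi_i}_\infty = O(\log n/\sqrt n)$ yields $\norm{\mu_i - \frac{\lambda_1}{\bar c\sqrt n\log n}D_i\pi_i}_\infty = O(1/(n\sqrt{\log n}))$, which is $o(1/(n\log\log n))$. Applying this for each $i\in\{+,-\}$ and using the triangle inequality on $\U$ — where $\mu_+,\mu_-,D_+\pi_+,D_-\pi_-$ are all defined — gives the stated bound, the intersection of the two high-probability events still holding with probability $1-o(n^{-1})$. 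I expect the main obstacle to be the $\ell^1$ estimate of $\norm{D_i\pi_i}_1$: one has to push the eigenvector and eigenvalue perturbations through a non-spectral norm with enough slack to beat the $\sqrt n\log n$-sized denominator, which is precisely why one controls $\norm{A_i}_{1\to 1}$ directly by $d_{\max}$ rather than through a spectral-norm bound.
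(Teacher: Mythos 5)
Your proof is correct, and while it follows the same overall skeleton as the paper's — reduce to the scalar comparison $\bigl|\tfrac{1}{\norm{D_i\pi_i}_1}-\tfrac{\lambda_1}{\bar c\sqrt n\log n}\bigr|$, then multiply by $\norm{D_i\pi_i}_\infty\le d_{\max}\norm{\pi_i}_\infty=O(\log n/\sqrt n)$ — you handle the crucial $\ell^1$ normalization estimate by a genuinely different route. The paper compares $D_i\pi_i$ to $\bar A_i\bar\pi_i/\lambda_1$ via the intermediate $D_iP_i\bar\pi_i/\lambda_1=A_i\bar\pi_i/\lambda_1$ and controls $\norm{\pi_i-P_i\bar\pi_i/\lambda_1}_1\le n\norm{\pi_i-P_i\bar\pi_i/\lambda_1}_\infty$ using the entrywise Lemma~\ref{lem:perturbed-eigenvector}, i.e.\ the full leave--one--out machinery; this yields $|\norm{D_i\pi_i}_1-\tilde c|=O(\sqrt n\log n/\log\log n)$ and hence the stated $C/(n\log\log n)$. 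You instead write $\norm{D_i\pi_i}_1=\norm{A_i\pi_i}_1/\lambda_1(P_i)$ and control $\norm{A_i(\pi_i-\bar\pi_i)}_1$ through the column-sum bound $\norm{A_i}_{1\to1}\le d_{\max}$ together with the purely $\ell^2$ Davis--Kahan estimate of Lemma~\ref{lem:mean-variance}, plus Theorem~\ref{th:lei-rinaldo} for $\norm{(A_i-\bar A_i)\bar\pi_i}_1$ and Lemma~\ref{lem:normalized-submatrix-norm} for the eigenvalue shift. This buys two things: the hardest ingredient of the paper's argument (the entrywise leave--one--out bound) is not needed for this lemma, only for $\norm{\pi_i}_\infty=O(n^{-1/2})$; and your error $O(\sqrt{n\log n})$ is sharper than the paper's $O(\sqrt n\log n/\log\log n)$, giving a final bound $O(1/(n\sqrt{\log n}))$ that strictly implies the stated $C/(n\log\log n)$. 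Your justification of the exact identity $\mu_i=D_i\pi_i/\norm{D_i\pi_i}_1$ via Perron--Frobenius positivity is also slightly more careful than the paper's, which asserts it directly from Lemma~\ref{lem:left-right-eigs}.
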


\begin{proof} 
By Lemma \ref{lem:left-right-eigs} we have $\mu_i = D_i\pi_i/\norm{D_i\pi_i}_1$. Let us first denote the normalizing factor for the right eigenvectors by $\tilde{c} = \bar{c}\sqrt{n}\log n/\lambda_1$.
By applying  Lemma \ref{lem:left-right-eigs}, we see that

\begin{align*}
&\norm{(\mu_- -\mu_+) - \frac{1}{\tilde{c}}(D_-\pi_--D_+\pi_+)}_\infty \\
&\qquad\qquad=  \norm{\frac{1}{\norm{D_-\pi_-}_1}D_-\pi_- -\frac{1}{\norm{D_-\pi_-}_1}D_+\pi_+ -\frac{1}{\tilde{c}}(D_-\pi_--D_+\pi_+)}_\infty \\
&\qquad\qquad\leq \norm{\left(\frac{1}{\tilde{c}}-\frac{1}{\norm{D_-\pi_-}_1}\right)D_-\pi_-}_\infty + \norm{\left(\frac{1}{\tilde{c}}-\frac{1}{\norm{D_+\pi_+}_1}\right)D_+\pi_+}_\infty 
\end{align*}

Both terms are identically distributed by exchangeability, so it is sufficient to bound
\[
\norm{\left(\frac{1}{\tilde{c}}-\frac{1}{\norm{D_i\pi_i}_1}\right)D_i\pi_i}_\infty \leq \frac{1}{\norm{D_i\pi_i}_1 \tilde{c}}\left|\norm{D_i\pi_i}_1- \tilde{c}\right|\norm{D_i}_\infty\norm{\pi_i}_\infty
\]
for either community $i \in \{+,-\}$. We proceed term by term. First, we note by Lemma \ref{lem:norm-eigenvector} and Lemma \ref{lem:min-degree} that there exist $c_1,c_2 > 0$ such that with probability $1-o(n^{-1})$, $\norm{\pi_i}_\infty \leq c_1/\sqrt{n}$ and $\norm{D_i}_\infty = d_{\max} \leq c_2\log n$. Next, we have that $\tilde{c} = \norm{\tilde{A}_i\tilde{\pi}_i}_1/\lambda_1 = \Theta\left(1/\sqrt{n}\log n\right)$. In addition, since $\norm{\pi_i}_\infty = O\left(1/\sqrt{n}\right)$, it holds for some $c_3 > 0$ with probability $1-o(n^{-1})$, $(\norm{D_i\pi_i}_1)^{-1} \leq c_3/(\sqrt{n}\log n)$. 
Finally, we bound $\left|\norm{D_i\pi_i}_1- \tilde{c}\right|$ by the reverse triangle inequality, Lemma \ref{lem:perturbed-eigenvector}, and Theorem \ref{th:lei-rinaldo}. As a result, there exist $c_4,c_5,c_6 > 0$ such that with probability $1-o(n^{-1})$ we have
\begin{align*}
\left|\norm{D_i\pi_i}_1- \tilde{c}\right| &\leq \norm{D_i\pi_i - \frac{\bar{A}_i\bar{\pi}_i}{\lambda_1}}_1\\
&\leq \norm{D_i\pi_i- \frac{D_iP_i\bar{\pi}_i}{\lambda_1}}_1 +\norm{\frac{D_iP_i\bar{\pi}_i}{\lambda_1} - \frac{\bar{A}_i\bar{\pi}_i}{\lambda_1}}_1 \\
&\leq \norm{D_i}_\infty\norm{\pi_i- \frac{P_i\bar{\pi}_i}{\lambda_1}}_1 +\frac{1}{\lambda_1}\norm{A_i\bar{\pi}_i - \bar{A}_i\bar{\pi}_i}_1 \\
&\leq n\norm{D_i}_\infty\norm{\pi_i- \frac{P_i\bar{\pi}_i}{\lambda_1}}_\infty +\frac{\sqrt{n}}{\lambda_1}\norm{A_i\bar{\pi}_i - \bar{A}_i\bar{\pi}_i}_2 \\
&\leq c_4\left(n\log n\norm{\pi_i- \frac{P_i\bar{\pi}_i}{\lambda_1}}_\infty +\sqrt{n}\norm{A_i - \bar{A}_i}_2\right) \\
&\leq c_5\left(\frac{n\log n}{\sqrt{n}\log \log n} +\sqrt{n\log n}\right) \\
&\leq c_6 \frac{\sqrt{n}\log n}{\log \log n}
\end{align*}
Combining the bounds, there exists $C > 0$ such that with probability $1-o(n^{-1})$ we get
\[
\norm{\left(\frac{1}{\tilde{c}}-\frac{1}{\norm{D_i\pi_i}_1}\right)D_i\pi_i}_\infty \leq \frac{C}{2}\left(\frac{1}{n\log^2n} \cdot \frac{\sqrt n \log n}{\log \log n} \cdot \log n \cdot \frac{1}{\sqrt n}\right) = \frac{C}{2n\log \log n}
\]
The result follows.
\end{proof}

\subsection{Entrywise error rate for right eigenvectors}
The entrywise error rate of $Q(u)$ is $\Prob{\sigma(u)Q(u) < 0}$, which is bounded above by $\Prob{\sigma(u)Q(u) \leq \eps\log n}$, where we include a noise term of $\eps \log n$ for some $\eps > 0$. From Lemma \ref{lem:left-eig-difference}, we note that we can approximate $Q(u) = (\mu_- - \mu_+)(u)$ by $d(u)(\pi_- - \pi_+)(u)$ up to a constant factor. To analyze both the QSD and mixed methods, we include a weight $\gamma_s$ for a simple voting component $S(u)$ and rescale the quasi-stationary component appropriately. By Definition \ref{def:mf-eig}, the mean-field eigenvector $\bar{\pi}_i$ satisfies $\gamma\sqrt{n}\bar{\pi}_i = \Ind{\U_i} + \rho \Ind{\C_{-i}}$. An additional factor of $\lambda_1$ follows from the approximation $\pi_i \sim P_i \bar{\pi}_i/\lambda_1$ inspired by the approach in \cite{Abbe2020}. Scaling by these constants gives the following lemma.

\begin{lemma}\label{lem:right-eig-error}
For any unseen node $u \in \U$ and any constant $\gamma_s$, we have that 
\[
\Prob{\sigma(u)\left(\gamma\lambda_1\sqrt{n}d(u)(\pi_--\pi_+)(u) + \gamma_s S(u)\right)\leq \eps \log n} \leq n^{-I(\rho-1,\rho + \gamma_s) + O(\eps)} +o(n^{-1})
\]
where $\gamma$ is defined in Definition \ref{def:mf-eig} and
\[
I(\alpha,\beta) = \frac{1}{2}\sup_{\theta > 0} a+b - (1-\delta)ae^{-\theta \alpha} -(1-\delta)be^{\theta \alpha} -\delta ae^{-\theta \beta} - \delta be^{\theta \beta}.
\]
\end{lemma}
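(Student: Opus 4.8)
The plan is to transfer the entrywise control of $\pi_i$ by its mean-field surrogate established in Lemma~\ref{lem:perturbed-eigenvector} to the linear statistic appearing in the statement, thereby reducing it to the quantity already handled by Lemma~\ref{lem:mean-field-error}. The elementary observation driving this is that $P_i = D_i^{-1}A_i$ on $\V_i$ with $D_i(u,u) = d(u)$, so that $d(u)\,(P_i\bar{\pi}_i)(u) = (A_i\bar{\pi}_i)(u)$ and hence
\[
\gamma\lambda_1\sqrt{n}\,d(u)\,\bigl(P_i\bar{\pi}_i/\lambda_1\bigr)(u) = \gamma\sqrt{n}\,(A_i\bar{\pi}_i)(u).
\]
Thus, up to the approximation error $r_i := \pi_i - P_i\bar{\pi}_i/\lambda_1$, the quasi-stationary part of the statement statistic is exactly $\gamma\sqrt{n}(A_-\bar{\pi}_- - A_+\bar{\pi}_+)(u)$, and adding the $\gamma_s S(u)$ term puts us precisely in the setting of Lemma~\ref{lem:mean-field-error}.

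First I would fix the good event $G$ on which $\norm{r_i}_\infty \le C/(\sqrt{n}\log\log n)$ for $i\in\{+,-\}$ (Lemma~\ref{lem:perturbed-eigenvector}) and $d_{\max} \le c\log n$ (Lemma~\ref{lem:min-degree}); a union bound gives $\Prob{G^c} = o(n^{-1})$. On $G$ the discrepancy between the statement statistic and its mean-field counterpart is
\[
\bigl|\gamma\lambda_1\sqrt{n}\,d(u)\,(r_- - r_+)(u)\bigr| \;\le\; 2|\gamma\lambda_1|\,\sqrt{n}\,d_{\max}\cdot\frac{C}{\sqrt{n}\log\log n} \;=\; O\!\left(\frac{\log n}{\log\log n}\right),
\]
using that $\gamma$ and $\lambda_1 \le 1$ (Lemma~\ref{lem:eig-lbound}) are constants depending only on $a,b,\delta$. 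Since $\log n/\log\log n = o(\log n)$, this discrepancy is at most $\eps\log n$ for all $n$ large.

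Then I would decompose $\Prob{E} \le \Prob{E \cap G} + \Prob{G^c}$, where $E$ denotes the event in the statement. On $G$ the bound above shows that $E$ forces
\[
\sigma(u)\bigl(\gamma\sqrt{n}(A_-\bar{\pi}_- - A_+\bar{\pi}_+)(u) + \gamma_s S(u)\bigr) \;\le\; \eps\log n + O\!\left(\frac{\log n}{\log\log n}\right) \;\le\; 2\eps\log n
\]
for $n$ large, so $\Prob{E\cap G}$ is bounded by the probability of the last event, which Lemma~\ref{lem:mean-field-error} (applied with $2\eps$ in place of $\eps$, the factor being absorbed in the $O(\eps)$) bounds by $n^{-I(\rho-1,\rho+\gamma_s)+O(\eps)}$. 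Adding $\Prob{G^c} = o(n^{-1})$ gives the claimed bound.

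I expect the only delicate point to be the passage from the event about the \emph{random} eigenvectors $\pi_\pm$ to the purely mean-field event of Lemma~\ref{lem:mean-field-error}: one must route through the good event $G$ and verify that the deterministic approximation error carries no hidden $n$-growth beyond $o(\log n)$. This is exactly where the $\log\log n$ gain in Lemma~\ref{lem:perturbed-eigenvector} (rather than a bare $O(n^{-1/2})$ entrywise bound) is essential, since it is what pushes the error strictly below the $\eps\log n$ slack. Everything else is bookkeeping with the fixed constants $\gamma,\rho$ from Definition~\ref{def:mf-eig} and $\lambda_1$ from Lemma~\ref{lem:eig-lbound}.
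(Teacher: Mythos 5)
Your proposal is correct and follows essentially the same route as the paper's proof: both expand $\pi_i$ around $P_i\bar{\pi}_i/\lambda_1$, use $d(u)(P_i\bar{\pi}_i)(u)=(A_i\bar{\pi}_i)(u)$ to reduce the mean-field part to Lemma~\ref{lem:mean-field-error}, and control the residual via Lemma~\ref{lem:perturbed-eigenvector} together with the degree bounds, contributing the $o(n^{-1})$ term. Your good-event formulation and the paper's two-term probability split are just two phrasings of the same argument.
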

\begin{proof}
We first expand out each eigenvector $\pi_i$ in terms of the mean-field eigenvector $\bar{\pi}_i$ writing
\[
\pi_i = \frac{P_i \bar{\pi}_i}{\lambda_1} + \left(\pi_i - \frac{P_i \bar{\pi}_i}{\lambda_1}\right)
\]
and get an entrywise difference of 
\begin{equation*}
(\pi_- - \pi_+)(u) = \frac{1}{\lambda_1}(P_-\bar{\pi}_-(u) -P_+\bar{\pi}_+ (u)) +  \left(\pi_- - \frac{P_-\bar{\pi}_-}{\lambda_1}\right)(u) - \left(\pi_+ - \frac{P_+\bar{\pi}_+}{\lambda_1}\right)(u)
\end{equation*}
for any $u \in \U$. Then we expand the eigenvectors to get
\begin{align}
    &\Prob{\sigma(u)\left(\gamma\lambda_1\sqrt{n}d(u)(\pi_--\pi_+)(u)+\gamma_s S(u)\right) \leq \eps \log n} \notag \\
    &\leq \Prob{\sigma(u)\left(\gamma\lambda_1\sqrt{n}d(u)\left(\frac{P_-\bar{\pi}_-}{\lambda_1}-\frac{P_+\bar{\pi}_+}{\lambda_1}\right)(u) +\gamma_sS(u)\right)\leq 2\eps \log n} \label{eq:main-left}  \\
    &\quad +\Prob{\sigma(u)\gamma\lambda_1\sqrt{n}d(u)\left(\left(\pi_- - \frac{P_-\bar{\pi}_-}{\lambda_1}\right) - \left(\pi_+ - \frac{P_+\bar{\pi}_+}{\lambda_1}\right)\right) (u) \leq -\eps \log n} \label{eq:main-right} 
\end{align}

Next, we note that $D_i P_i = A_i$, allowing us to simplify  \eqref{eq:main-left} to a scaled difference of binomials. We then apply the Chernoff bound in Lemma \ref{lem:mean-field-error} to show that there exists an $\eps > 0$ independent of $n$ such that for each $u \in \U$, we bound \eqref{eq:main-left} by
\begin{align*}
&\Prob{\sigma(u)\left(\gamma\lambda_1\sqrt{n}d(u)\left(\frac{P_-\bar{\pi}_-}{\lambda_1}-\frac{P_+\bar{\pi}_+}{\lambda_1}\right)(u) +\gamma_s S(u)\right)\leq 2\eps \log n} \\
&\qquad= \Prob{\sigma(u)\left(\gamma\sqrt{n}(A_-\bar{\pi}_--A_+\bar{\pi}_+)(u) +\gamma_s S(u) \right)\leq 2\eps\log n} \\
&\qquad \leq  n^{-I(\rho-1,\rho+\gamma_s) + O(\eps)}
\end{align*}
where $I$ is defined as in Lemma \ref{lem:mean-field-error} and
\[
I(\rho-1,\rho+\gamma_s) = \frac{1}{2}\sup_{\theta > 0} (a+b) - (1-\delta)ae^{-\theta(\rho-1)} -(1-\delta)be^{\theta(\rho-1)} -\delta ae^{-\theta (\rho+\gamma_s)} - \delta be^{\theta (\rho+\gamma_s)}.
\]

Finally, by Lemma \ref{lem:perturbed-eigenvector}, $\norm{\pi_i - \frac{P_i\bar{\pi}_i}{\lambda_1}}_\infty  = o\left(n^{-1/2}\right)$ with probability $1-o(n^{-1})$. When this holds, via the triangle inequality we have
\[
\norm{\left(\pi_- - \frac{P_-\bar{\pi}_-}{\lambda_1}\right) - \left(\pi_+ - \frac{P_+\bar{\pi}_+}{\lambda_1}\right)}_\infty = o\left(n^{-1/2}\right).
\] 
We relate this to \eqref{eq:main-right} via a bound on the minimum degree $d_{\min} = \Theta(\log n)$ in Lemma \ref{lem:min-degree} that holds with probability $1-o(n^{-1})$. In addition, by Lemma \ref{lem:eig-lbound}, $\lambda_1$ is of constant order. Therefore, setting $\eps_1 = (\eps \log n)/(\lambda_1 d_{\min})  = \Theta(1)$ and taking any unseen node $u \in \U$, \eqref{eq:main-right} satisfies 
\begin{align*}
&\Prob{\sigma(u)\lambda_1\sqrt{n}d(u)\left(\left(\pi_- - \frac{P_-\bar{\pi}_-}{\lambda_1}\right) - \left(\pi_+ - \frac{P_+\bar{\pi}_+}{\lambda_1}\right)\right) (u) \leq -\eps \log n} \\
&\qquad \leq \Prob{\sigma(u)\left(\left(\pi_- - \frac{P_-\bar{\pi}_-}{\lambda_1}\right) - \left(\pi_+ - \frac{P_+\bar{\pi}_+}{\lambda_1}\right)\right) (u) \leq -\frac{\eps_1}{\sqrt{n}}} \\
& \qquad =  o(n^{-1})
\end{align*}
for any constant $\eps_1 > 0$.
Then we combine our bounds on \eqref{eq:main-left} and \eqref{eq:main-right} to get
\[
\Prob{\sigma(u)\left(\gamma\lambda_1\sqrt{n}d(u)(\pi_--\pi_+)(u) + \gamma_s S(u)\right)\leq \eps \log n} \leq n^{-I(\rho-1,\rho+\gamma_s) + O(\eps)} + o(n^{-1}). \qedhere
\]    
\end{proof}

\section{Experiments and Discussion}\label{sec:experiments}
%========================================

In this section, we compare the proposed methods over both simulated and real-world datasets. In all cases, we reveal approximately 10\% of nodes. The simulated data follows a connected and bounded degree SBM with connectivity parameters $a=4$ and $b=1$, which are below the exact recovery rate. The real datasets come from Yang et al.~\cite{Yang12}. In the analysis, we consider all graphs as undirected. In Figure \ref{fig:qsd} we plot the QSD eigenvector entries $\left(\mu_-(u),\mu_+(u)\right)$ for the connected SBM and their difference $\mu_-(u) - \mu_+(u)$.

In general, more information should not hurt performance. However, we note that applying $k$-means on the two quasi-stationary eigenvectors leads to a linearly separable problem in the plane for the connected SBM case with parameters as mentioned above. Panel \ref{fig:qsd-2d} plots the two quasi-stationary eigenvectors $(\mu_+,\mu_-)$ with the communities labeled. An alternative approach in two dimensions is classification by support vector machine, which runs into the issue that the labeled nodes are only defined along one coordinate. Another alternative is spectral clustering via similarity graphs, as implemented by von Luxburg~\cite{Luxburg07}. We will further explore appropriate embeddings of the labeled nodes in the plane in future work.

\begin{figure*}[ht]
    \centering
    \begin{subfigure}[t]{0.49\linewidth}
        \centering
        \includegraphics[width=1\linewidth]{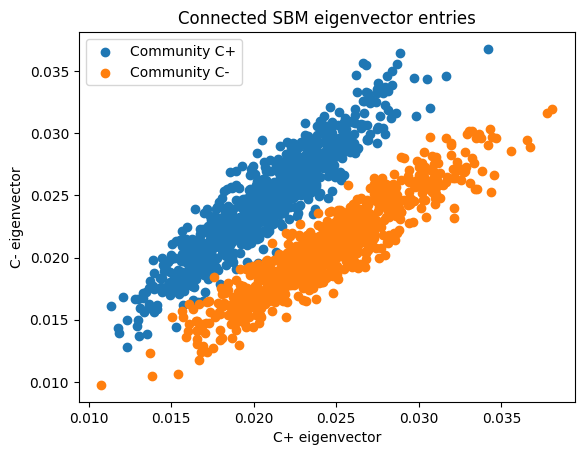}
        \caption{QSD eigenvector entries.}
        \label{fig:qsd-2d}
    \end{subfigure}
    ~
    \begin{subfigure}[t]{0.49\linewidth}
        \centering
        \includegraphics[width=1\linewidth]{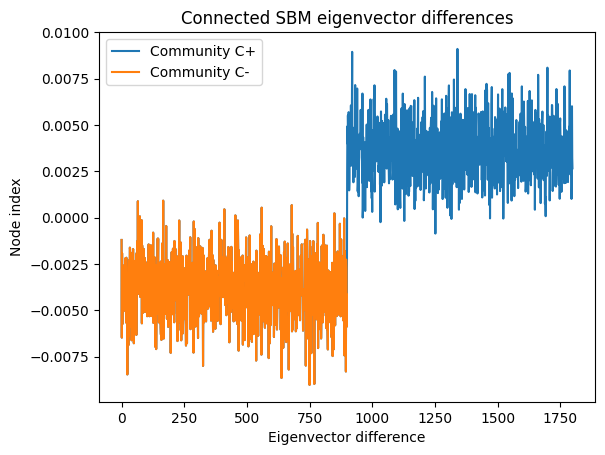}
        \caption{QSD eigenvector difference.}
        \label{fig:qsd-diff}
    \end{subfigure} 
    \caption{QSD eigenvectors on connected SBM.}\label{fig:qsd}
\end{figure*}

In Table \ref{tab:qsd_experiments}, we show the number of nodes for each graph, the recovery rate from the unlabeled nodes when running the standard spectral clustering and the quasi-stationary clustering. We computed the k-means based on the QSD eigenvectors but omitted them due to subpar performance. Since isolated nodes cannot be consistently classified and random walks cannot cross disconnected components, we restrict all analyses to the giant component of the respective datasets.

\begin{table}[ht]
    \centering
    \begin{tabular}{|c|c|c|c|c|}
        \hline
        Dataset & Nodes & Spectral Recovery Rate & QSD Recovery Rate\\
        \hline
        SBM-connected & 2000 & \textbf{1} & 0.992 \\
        SBM-bounded-degree & 2000 & .502 & \textbf{0.81}\\
        IONOSPHERE & 351 & \textbf{.69} & 0.67\\
        DIABETES & 768 & .522 & \textbf{0.67} \\
        WDBC & 683 & \textbf{.70} & 0.55\\
        POLBLOGS & 1224 & .915 & \textbf{0.943}\\
        SPAM & 4601 & .673 & \textbf{0.70 }\\
        GISETTE & 7000 & .906 & \textbf{0.959}\\
        \hline
    \end{tabular}
    \caption{Comparison of Recovery Rates for the Standard Spectral and QSD methods with a $\delta = 0.1$ fraction of revealed nodes.}
     \label{tab:qsd_experiments}
\end{table}

We see that especially in the sparse case, side information allow us to dramatically outperform the standard spectral recovery rate. 

In this work, we examined quasi-stationary algorithms for community detection on Stochastic Block Model (SBM) with two balanced communities and partially revealed labels under noiseless conditions. We introduced a class of single-step estimators based on quasi-stationary distributions, which combine a spectral component and a simple voting scheme utilizing revealed nodes. By aligning a random walk intuition with a spectral algorithm, we achieved asymptotically optimal recovery rates and a useful way to combine side information from partially revealed communities. This was accomplished by extending entrywise eigenvector analysis from the adjacency and Laplacian matrices to the transition submatrices corresponding to revealed nodes within each community.

In the connected regime, tight information-theoretic limits imply that nearly all labels must be revealed to change the exact recovery threshold. Our bound in Theorem \ref{th:main-result} for the mixed method gives the optimal asymptotic rate by a careful tuning of the weight to the quasi-stationary component. Future parameter tuning may yield non-asymptotic improvements in the error rate. In the bounded degree regime, this framework may yield additional insight for the statistical physics conjectures of Zhang, Moore, and Zdeborov\'a.~\cite{Zhang14} on partial recovery with side information, and our experimental findings suggest significant potential gains from side information.

In future extensions of the PL--SBM model and analysis, we will consider random graphs with weighted edges, extend the approach to networks with multiple communities, and derive error bounds in the bounded degree regime.

\bibliographystyle{abbrvnat}
\bibliography{references}
\end{document}